\setlist{itemsep=0.2em, parsep=0.2em} 
\setlist[itemize,1]{label=\ensuremath{\blacktriangleright}}
\setlist[itemize,2]{label=\ensuremath{\triangleright}}
\theoremstyle{plain}
\newtheorem{theorem}{Theorem}[section]
\newtheorem{lemma}[theorem]{Lemma}
\newtheorem{proposition}[theorem]{Proposition}
\newtheorem{corollary}[theorem]{Corollary}
\newtheorem*{thmA}{Theorem A}
\newtheorem*{thmB}{Theorem B}
\newtheorem*{thmC}{Theorem C}
\theoremstyle{definition}
\newtheorem{definition}[theorem]{Definition}
\newtheorem{example}[theorem]{Example}
\newtheorem{remark}[theorem]{Remark}
\newcommand{\Zp}{\mathbb{Z}_p}                        
\newcommand{\Qp}{\mathbb{Q}_p}                        
\newcommand{\valpi}{\mathrm{val}_{\unif}}                   
\newcommand{\Ring}{R}                          
\newcommand{\unif}{\pi}                        
\newcommand{\sheaf}[1]{\mathcal{#1}}                  
\newcommand{\stalk}[2]{#1(#2)}                        
\newcommand{\face}{\to}                               
\newcommand{\res}[3]{#1_{#2 \face #3}}                  
\newcommand{\sat}{\mathrm{sat}}
\newcommand{\free}{\mathrm{free}}
\newcommand{\tors}{\mathrm{tors}}
\newcommand{\Bock}{\beta}            
\newcommand{\Conn}[1]{\Delta_{#1}}   
\newcommand{\Digit}[1]{\partial_{#1}}
\newcommand{\coboundary}{d}                           
\newcommand{\gr}{\mathrm{gr}}                         
\DeclareMathOperator{\im}{im}                         
\DeclareMathOperator{\coker}{coker}                   
\DeclareMathOperator{\Tor}{Tor}                       
\DeclareMathOperator{\rank}{rank}
\title[Precision-Graded Cohomology]{Precision-Graded Cohomology and Arithmetic Persistence \\ for Network Sheaves}
\author{Robert Ghrist}
\address{Departments of Mathematics and Electrical \& Systems Engineering \\ University of Pennsylvania \\ Philadelphia, PA 19104}
\email{ghrist@math.upenn.edu}
\author{Cassie Ding}
\address{Department of Electrical \& Systems Engineering \\ University of Pennsylvania \\ Philadelphia, PA 19104}
\email{dkexin@seas.upenn.edu}
\begin{document}

\begin{abstract}
Persistent homology tracks topological features across geometric scales, encoding birth and death of cycles as barcodes. We develop a complementary theory where the filtration parameter is algebraic precision rather than geometric scale. Working over the $p$-adic integers $\mathbb{Z}_p$, we define \emph{arithmetic barcodes} that measure torsion in network sheaf cohomology: each bar records the precision threshold at which a cohomology class fails to lift through the valuation filtration $\mathbb{Z}_p \supseteq p\mathbb{Z}_p \supseteq p^2\mathbb{Z}_p \supseteq \cdots$.

Our central result -- the \emph{Digit-SNF Dictionary} -- establishes that hierarchical precision data from connecting homomorphisms between successive mod-$p^k$ cohomology levels encodes exactly the Smith normal form exponents of the coboundary operator. Bars of length $a$ correspond to $\mathbb{Z}_p/p^a\mathbb{Z}_p$ torsion summands. For rank-one sheaves, cycle holonomy (the product of edge scalings around loops) determines bar lengths explicitly via $p$-adic valuation, and threshold stability guarantees barcode invariance when perturbations respect precision. Smith normal form provides integral idempotents projecting onto canonical cohomology representatives without geometric structure.

Results extend to arbitrary discrete valuation rings, with $p$-adic topology providing ultrametric geometry when available. Applications include distributed consensus protocols with quantized communication, sensor network synchronization, and systems where measurement precision creates natural hierarchical structure. The framework repositions torsion from computational obstacle to primary signal in settings where data stratifies by precision.
\end{abstract}

\subjclass[2020]{55N30, 13F30, 15A21}
\keywords{sheaf cohomology, p-adic integers, Smith normal form, arithmetic barcode}

\maketitle

\section{Introduction}
\label{sec:intro}

Network sheaves are cellular sheaves \cite{Curry_SheavesCosheaves_2014} with a 1-dimensional base space.
These provide a flexible framework for encoding local data and 
constraints on graphs, with cohomology measuring global obstructions to 
consistency. Over fields -- typically $\mathbb{R}$, $\mathbb{C}$, or finite 
fields -- the theory is well-developed: cohomology groups are vector spaces, 
and when inner products are given, harmonic analysis via Hodge 
decomposition provides canonical representatives for cohomology classes. 

We consider the cohomology of network sheaves on finite graphs whose stalks are finite-rank 
free modules over a discrete valuation ring (DVR) -- a principal ideal 
domain $\Ring$ with unique nonzero prime ideal $(\unif)$, where $\unif$ is the 
\emph{uniformizer}, and with finite residue field $\Ring/\unif$. 
The prototypical example of a DVR to which our methods apply is the ring of $p$-adic 
integers $\Ring=\Zp$, where $\unif=p$ and $\Ring/\unif=\mathbb{F}_p$ \cite{AtiyahMacdonald_ICA_1969}. 
Other examples include formal power series rings $\mathbb{F}_q[[t]]$ and rings of integers 
in finite extensions of $\Qp$.

Over a DVR, cohomology changes character. The group $H^1$ generically 
carries \emph{torsion} -- elements annihilated by powers of $\unif$ -- even when 
its rationalization $H^1 \otimes_{\Ring} K$ (where $K = \mathrm{Frac}(\Ring)$) is 
free. This torsion is not pathological but rather the essential feature 
distinguishing integral from rational cohomology. Classical decompositions based 
on inner products fail because DVRs lack positive-definite pairings: orthogonal 
complements need not split, and $\im d$ need not be a direct summand of $C^1$. 
The obstacle is precisely the torsion in $H^1 = C^1/\im d$. 

The algebraic structure of DVRs provides natural tools for understanding this 
torsion. The filtration by powers of $\unif$ stratifies modules by precision: 
$M \supseteq \unif M \supseteq \unif^2 M \supseteq \cdots$, with successive 
quotients $\unif^k M/\unif^{k+1}M$ capturing data at the $k$-th level of 
refinement. For the $p$-adics $\Ring = \Zp$, this hierarchy has geometric meaning. The $p$-adic integers carry a natural ultrametric absolute value 
$|x|_p = p^{-\valpi(x)}$ satisfying the strong triangle inequality 
$|x+y|_p \leq \max\{|x|_p,|y|_p\}$ with equality when $|x|_p \neq |y|_p$ \cite{Schikhof1985,Gouva2020}. 
Multiplication by $p$ shifts elements one level deeper in this hierarchy, just 
as multiplying a decimal by $10$ shifts digits. The filtration 
$C^1 \supseteq pC^1 \supseteq p^2C^1 \supseteq \cdots$ thus encodes not merely 
an algebraic tower but a geometric refinement by precision, where each quotient 
$p^kC^1/p^{k+1}C^1$ represents data at a specific resolution \cite{Schikhof1985,Gouva2020,Koblitz1984}.

This precision stratification organizes cohomology hierarchically. Passing from 
cochains to cohomology, one obtains connecting homomorphisms -- the 
\emph{digit maps} -- between successive levels, whose images measure obstructions 
to lifting cohomology classes through the filtration. The dimensions of these 
images encode the complete torsion structure, and the relationship between this 
hierarchical data and the algebraic invariants from Smith normal form is our 
central result: {\em the Digit-SNF Dictionary}.

This all might seem a thin scenario -- only $H^0$ and $H^1$ are of interest for network sheaves. 
Yet, as $H^0$ classifies global sections and $H^1$ the obstructions thereunto, there is something 
nontrivial to be gained, especially for problems of network consensus and local-to-global 
integration. Our theory will develop with such applications in mind, and a prototypical 
example will follow at the conclusion of the theory. 

The paper has three main themes. First, we show that the filtration by powers of 
$\unif$ organizes cohomology into a hierarchy of precision levels, with 
connecting homomorphisms between levels whose dimensions encode the complete 
torsion structure. Second, we prove that this hierarchical information coincides 
exactly with the algebraic invariants from Smith normal form, establishing a 
dictionary that makes torsion both conceptually transparent and algorithmically 
accessible. Third, we show that the Smith normal form exponents define an 
\emph{arithmetic barcode} where torsion summands correspond to finite bars 
measuring precision thresholds, and prove stability results ensuring these 
barcodes are robust under high-precision perturbations.

Section~\ref{sec:prelims} establishes foundations: discrete valuation rings, 
graphs, and network sheaves with free stalks. Section~\ref{sec:bockstein-digit} 
develops the precision hierarchy through digit sequences and proves spectral 
sequence collapse for graphs. Section~\ref{sec:snf} constructs explicit integral 
decompositions via Smith normal form and proves the Digit-SNF Dictionary. 
Section~\ref{sec:barcodes} develops the arithmetic barcode interpretation, 
determines stability, and 
provides explicit formulas via cycle holonomy. Section~\ref{sec:application} 
demonstrates applications to distributed consensus with quantized communication.

\subsection{Related Work and Context}
\label{ssec:related-work}

Network sheaves on graphs and their cohomology and Hodge theory over fields are now well established.
Applications of network sheaf cohomology with field coefficients include network coding \cite{GhristHiraoka2011_NOLTA_NetworkCodingSheaves}, 
distributed sensing \cite{HansenGhrist2019_ICASSP_LearnSheafLaplacians}, distributed optimization \cite{HansenGhrist2019_Allerton_SheafHomologicalOpt}, spectral graph theory \cite{Hansen_2019}, 
opinion dynamics \cite{hansen2021opinion}, sheaf neural networks \cite{hansen2020sheaf,BodnarEtAl2022_NeuralSheafDiffusion}, graphic statics \cite{cooperband2023cosheaf,cooperband2023homological}, protein folding \cite{HayesEtAl2025_PersistentSheafLaplacianProtein} origami \cite{CooperbandGhrist_2025_OrigamiCosheaf}, 
and visual paradoxes \cite{GhristCooperband_2025_ObstructionsToReality}: in all cases, the network sheaf encodes how local measurements relate 
across edges, and $H^1$ gives obstructions to the global sections captured by $H^0$. While there
are examples of network sheaves with non-field coefficients, these have often been either
non-algebraic (e.g., set-valued network sheaves for applications to logic \cite{Goguen1992Sheaves}, sensing \cite{Robinson2017_SheavesSensorIntegration} or Reeb graphs 
 \cite{deSilva2016Categorified}) or rather specialized and esoteric (such as order lattices \cite{GhristRiess2022_TarskiLaplacian,riess2022diffusion} or categories enriched over quantales
 \cite{GhristLopezNorthRiess_2025_CategoricalDiffusion}). The investigation of p-adic coefficients for network sheaves is both general and novel. 

In persistent homology, the standard barcode classification relies on working over a field; the one-parameter classification as a multiset of intervals is a manifestation of quiver-representation theory for the $A_n$ quiver over a field \cite{CarlssonDeSilva2010,Oudot2015,CrawleyBoevey2015}. Over non-field coefficients such as $\mathbb{Z}$ or more general PIDs, torsion appears and the structure is richer: the modules along a filtration carry $p$-power torsion and the output depends on coefficient choice \cite{EdelsbrunnerLetscherZomorodian2002,ZomorodianCarlsson2005,RomeroSinchiTorres2016,ObayashiYoshiwaki2023}. Spectral-sequence tools such as the Bockstein homomorphism have been used to lift mod-$p$ information and compute torsion \cite{RenWuWu2017}. Our setting -- network sheaves with stalks free over a DVR -- isolates this torsion phenomenon in dimension $1$ and packages it into \emph{arithmetic barcodes} whose bar lengths are the uniformizer-exponents.

Our work synthesizes classical algebraic techniques with sheaf-theoretic computation in a new setting. Smith normal form over principal ideal domains is a cornerstone of module theory \cite{Newman_IntegralMatrices_1972,Kannan1979,Rotman2009}, and its use in computational homology is classical \cite{ZomorodianCarlsson2005,EdelsbrunnerHarer_CT_2010,MischaikowMrozek2002_ComputationalHomology}.
The application to network sheaf cohomology -- yielding explicit integral idempotents that commute with reduction and provide canonical representatives -- appears not to have been remarked upon. The Bockstein spectral sequence arising from coefficient ring filtrations is standard in algebraic topology \cite{McCleary2000,Rotman2009}, and its collapse for graphs follows from dimension considerations. The explicit quantitative relationship we establish between digit map dimensions and Smith normal form exponents -- the {\em Digit-SNF Dictionary} -- packages this correspondence as a practical computational tool for the first time. The stability results we prove for arithmetic barcodes under perturbations of the coboundary are reminiscent of bottleneck distance theorems in persistent homology \cite{CohenSteiner2006,EdelsbrunnerHarer_CT_2010}, though the ultrametric setting requires distinct techniques.

The ultrametric structure of $\Zp$ suggests connections to hierarchical data analysis. Ultrametric spaces appear naturally in phylogenetics, taxonomy, clustering, and more \cite{KuchakiRafsanjani2012}. Our precision filtration $\{p^k\}$ is analogous to scale filtrations in persistent homology, raising the possibility of ``$p$-adic persistence'' where torsion at different precisions plays the role of features at different scales. This connection remains to be developed. In the literature on persistent homology, dependence on coefficient choices is tied precisely to torsion in filtered pairs \cite{ObayashiYoshiwaki2023}; the $p$-adic viewpoint here makes that dependence algorithmic and quantitative via valuation exponents.

In the special case of the constant sheaf with $\mathbb{Z}$-coefficients, the torsion part of the first cohomology is intimately related to the well-studied critical group (or Jacobian, or sandpile group) of the graph \cite{Biggs1999_CriticalGroup, Lorenzini1991_GroupLaplacian}. This finite abelian group, whose structure is determined by the Smith normal form of the graph Laplacian, is the central object in the Riemann-Roch theory for graphs \cite{BakerNorine2007_RiemannRochGraphs}. Our framework extends this notion to arbitrary network sheaves over DVRs, where the arithmetic barcode provides a refined, precision-graded analysis of a `twisted' critical group whose structure is determined by both the graph topology and the sheaf's restriction maps.

\subsection{Main Results}
\label{ssec:main}

We work over a discrete valuation ring $\Ring$ with uniformizer $\unif$, and write $\sheaf{F}$ for a network sheaf on a finite graph $G$ with stalks that are finite-rank free $\Ring$-modules. 
Our three main results are as follows.

\begin{thmA}[Digit-SNF Dictionary]
\label{thm:A-digit-snf}
Let $\coboundary: C^0(G;\sheaf{F})\to C^1(G;\sheaf{F})$ be the coboundary. There exist unimodular matrices $U, V$ (with entries in $\Ring$) such that
\[
U\,[\coboundary]\,V = \mathrm{diag}\bigl(\unif^{a_1},\ldots,\unif^{a_r},0,\ldots,0\bigr)
\]
is the Smith normal form of $\coboundary$, with $0\le a_1\le\cdots\le a_r$ and $r=\rank_{\Ring}(\coboundary)$. For each $k\ge 0$, the digit connecting homomorphism
\[
\Digit{k}: H^0\bigl(G;\sheaf{F}/\unif^k\sheaf{F}\bigr)\longrightarrow H^1\bigl(G;\sheaf{F}/\unif\sheaf{F}\bigr)
\]
arising from
$0\to \unif^k\sheaf{F}/\unif^{k+1}\sheaf{F}\to \sheaf{F}/\unif^{k+1}\sheaf{F}\to \sheaf{F}/\unif^k\sheaf{F}\to 0$
satisfies
\[
\dim_{\,\Ring/\unif}\im\bigl(\Digit{k}\bigr) = \#\{\,j: 1 \le a_j\le k\,\}.
\]
Consequently, the sequence $\bigl(\dim_{\mathbb{F}} \im(\Digit{k})\bigr)_{k\ge 0}$ is nondecreasing, stabilizes at $\#\{j: a_j \ge 1\}$ (the number of torsion summands), and the multiset $\{a_j : a_j \ge 1\}$ of torsion exponents is determined by
\[
\#\{j:\ a_j = \ell\} \;=\; \dim_{\mathbb{F}} \im(\Digit{\ell}) - \dim_{\mathbb{F}} \im(\Digit{\ell-1})
\quad\text{for }\ell\ge 1.
\]
Moreover, when the residue field $\Ring/\unif$ is finite
\[
\bigl|H^1(G;\sheaf{F})_{\tors}\bigr| = \bigl(\#(\Ring/\unif)\bigr)^{\sum_{j=1}^r a_j}.
\]
\end{thmA}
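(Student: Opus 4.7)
The plan is to diagonalize the coboundary via Smith normal form and reduce the entire problem to rank-one computations. Choose bases in which $[\coboundary]$ is $\mathrm{diag}(\unif^{a_1},\ldots,\unif^{a_r},0,\ldots,0)$; the cochain complex then splits as a direct sum of $r$ rank-one pieces of the form $\Ring\xrightarrow{\unif^{a_j}}\Ring$ together with free summands $\Ring\to 0$ (contributing to $H^0$) and $0\to \Ring$ (contributing to the free part of $H^1$). Because the short exact sequence $0\to \unif^k\sheaf{F}/\unif^{k+1}\sheaf{F}\to \sheaf{F}/\unif^{k+1}\sheaf{F}\to \sheaf{F}/\unif^k\sheaf{F}\to 0$ and its associated connecting homomorphism $\Digit{k}$ are both additive in the cochain complex, it suffices to analyze $\Digit{k}$ summand by summand.

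For a rank-one summand $\Ring\xrightarrow{\unif^a}\Ring$, I would compute $\Digit{k}$ directly via the snake lemma. A class in $H^0(\sheaf{F}/\unif^k\sheaf{F})$ is represented by $c\in \Ring/\unif^k$ with $\unif^a c\equiv 0\pmod{\unif^k}$; lift to $\tilde c\in \Ring/\unif^{k+1}$, apply the coboundary to obtain $\unif^a\tilde c$, which by hypothesis is divisible by $\unif^k$, and divide by $\unif^k$ to land in $H^1(\sheaf{F}/\unif\sheaf{F})\cong \Ring/\unif$. Four cases result. If $a=0$, the map $\unif^a$ is an isomorphism and the kernel vanishes. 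If $1\le a\le k$, kernel elements take the form $c=\unif^{k-a}u$ with $u\in \Ring/\unif^a$; taking the lift $\tilde c=\unif^{k-a}\tilde u$ yields $\unif^a\tilde c=\unif^k\tilde u$ modulo $\unif^{k+1}$, so division by $\unif^k$ gives $\tilde u\bmod\unif$, a surjection onto $\Ring/\unif$. If $a>k$, every $c$ is in the kernel but $\unif^a\tilde c=0$ in $\Ring/\unif^{k+1}$, so $\Digit{k}=0$. The free summands carry no connecting data.

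Assembling the pieces, $\im(\Digit{k})$ is the subspace of $H^1(\sheaf{F}/\unif\sheaf{F})$ spanned by $\{\bar f_j:1\le a_j\le k\}$, while the full basis of $H^1(\sheaf{F}/\unif\sheaf{F})$ (computed identically in the SNF frame) is $\{\bar f_j:a_j\ge 1\}\cup\{\bar f_j:j>r\}$. Thus $\dim_{\Ring/\unif}\im(\Digit{k})=\#\{j:1\le a_j\le k\}$. Monotonicity is immediate since the index set grows with $k$, stabilization at $\#\{j:a_j\ge 1\}$ occurs for $k\ge\max_j a_j$, and the telescoping difference yields $\#\{j:a_j=\ell\}$ as stated. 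Finally, SNF identifies the torsion part of $H^1(G;\sheaf{F})$ with $\bigoplus_{a_j\ge 1}\Ring/\unif^{a_j}$, so $\bigl|H^1(G;\sheaf{F})_{\tors}\bigr|=\prod_{a_j\ge 1}|\Ring/\unif^{a_j}|=|\Ring/\unif|^{\sum_j a_j}$.

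The principal obstacle is the rank-one snake-lemma computation: the lift $\tilde c$ and the subsequent division by $\unif^k$ require careful tracking, though the answer is forced by the standard well-definedness of the connecting homomorphism. Once that rank-one dictionary is in hand, the remaining assertions follow by additivity and routine SNF bookkeeping.
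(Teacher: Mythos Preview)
Your proposal is correct and follows essentially the same approach as the paper: diagonalize via Smith normal form, use that the resulting chain isomorphism descends modulo every $\unif^{k+1}$ so that $\Digit{k}$ can be computed in the diagonal frame, reduce to the rank-one blocks $\Ring\xrightarrow{\unif^a}\Ring$, and read off the indicator $\mathbf{1}_{\{1\le a\le k\}}$ per block (this is the content of the paper's Lemma~\ref{lem:two-term-digit} and Lemma~\ref{lem:digit-increment}). Your inline snake-lemma chase for the rank-one case is in fact more explicit than the paper's treatment.
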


\begin{remark}
\label{rem:matrix-notation}
The cochain groups $C^i(G;\sheaf{F})$ are finite products of stalks, hence finite-rank 
free $\Ring$-modules. Via the natural coordinate projection bases, we have canonical 
identifications $C^0(G;\sheaf{F}) \cong \Ring^{n_0}$ and $C^1(G;\sheaf{F}) \cong \Ring^{n_1}$ 
where $n_i$ is the number of $i$-cells. Under these identifications, the coboundary operator 
$\coboundary: C^0 \to C^1$ is represented as an $n_1 \times n_0$ matrix, and 
we may (as above) write $[\coboundary]$ to emphasize the explicit representation. 
\end{remark}

\begin{thmB}[Saturation Splitting]
\label{thm:B-saturation}
Let $K=\mathrm{Frac}(\Ring)$. The saturation
\[
\sat\bigl(\im \coboundary\bigr) := \bigl(\im \coboundary \otimes_{\Ring} K\bigr)\cap C^1(G;\sheaf{F})
\]
is the unique minimal direct summand of $C^1(G;\sheaf{F})$ containing $\im \coboundary$. If $U,V$ are as in Theorem~A, they induce \textbf{integral idempotent matrices}
\[
\Pi_{\ker}\in \mathrm{End}\bigl(C^0(G;\sheaf{F})\bigr),\qquad
\Pi_{\sat},\Pi_{\free}\in \mathrm{End}\bigl(C^1(G;\sheaf{F})\bigr),\qquad
\Pi_{\free}:=\mathrm{Id}-\Pi_{\sat},
\]
such that
\begin{enumerate}[label=\textup{(\roman*)}]
\item $\im(\Pi_{\ker})=\ker \coboundary = H^0(G;\sheaf{F})$;
\item $\im(\Pi_{\sat})=\sat(\im \coboundary)$;
\item the quotient $q:C^1\to H^1$ restricts to an isomorphism $q|_{\im(\Pi_{\free})}:\im(\Pi_{\free})\xrightarrow{\;\cong\;} H^1(G;\sheaf{F})_{\free}$;
\item $\sat(\im \coboundary)/\im \coboundary \cong H^1(G;\sheaf{F})_{\tors}$.
\end{enumerate}
Moreover, for all $k\ge 1$, reduction modulo $\unif^k$ commutes with $\Pi_{\ker},\Pi_{\sat},\Pi_{\free}$.
\end{thmB}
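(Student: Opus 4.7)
The plan is to leverage the SNF decomposition from Theorem A throughout: the unimodular matrices $U, V$ provide explicit integral bases in which the coboundary becomes diagonal, and all four claims reduce to direct linear-algebraic computations in these bases. I would organize the argument in three stages: first establish the saturation as a minimal direct summand, then explicitly construct the idempotents from $U$ and $V$, then verify items (i)--(iv) and the reduction compatibility in turn.

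For the saturation claim, the key observation is that $C^1/\sat(\im \coboundary)$ is torsion-free: any torsion element lifts to some $c \in C^1$ with $\unif^n c \in \im \coboundary$, and by definition this places $c \in \sat(\im \coboundary)$. Since $C^1$ is finitely generated over the PID $\Ring$, the quotient is finitely generated and torsion-free, hence free, so the short exact sequence $0 \to \sat(\im \coboundary) \to C^1 \to C^1/\sat(\im \coboundary) \to 0$ splits, witnessing that $\sat(\im \coboundary)$ is a direct summand. For minimality, any direct summand $M \supseteq \im \coboundary$ has $C^1/M$ realized as a summand of the free module $C^1$, hence torsion-free; then $c \in \sat(\im \coboundary)$ satisfies $\unif^n c \in \im \coboundary \subseteq M$, forcing $c \in M$.

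Next I would write the columns of $V$ as $v'_1, \ldots, v'_{n_0}$ and the columns of $U^{-1}$ as $u'_1, \ldots, u'_{n_1}$. The identity $U[\coboundary]V = D$ yields $\coboundary(v'_j) = \unif^{a_j} u'_j$ for $j \le r$ and $\coboundary(v'_j) = 0$ for $j > r$. Define
\[
\Pi_{\ker} = V \cdot \mathrm{diag}(0_r, I_{n_0-r}) \cdot V^{-1}, \qquad \Pi_{\sat} = U^{-1} \cdot \mathrm{diag}(I_r, 0_{n_1-r}) \cdot U,
\]
and $\Pi_{\free} = \mathrm{Id} - \Pi_{\sat}$; unimodularity of $U, V$ ensures all entries lie in $\Ring$, and idempotency is immediate from the diagonal factor. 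In the adapted bases, $\Pi_{\ker}$ projects onto $\mathrm{span}(v'_{r+1}, \ldots, v'_{n_0}) = \ker \coboundary$, giving (i). For (ii), observe that $\sat(\im \coboundary) = \mathrm{span}(u'_1, \ldots, u'_r)$ in the $u'$-basis, since saturation divides out the $\unif^{a_j}$ factors, and this is exactly $\im(\Pi_{\sat})$.

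Finally, the decomposition $C^1 = \im(\Pi_{\sat}) \oplus \im(\Pi_{\free})$ descends to $H^1 = \bigl(\sat(\im \coboundary)/\im \coboundary\bigr) \oplus \im(\Pi_{\free})$. In the $u'$-basis, $\sat(\im \coboundary)/\im \coboundary \cong \bigoplus_{j=1}^r \Ring/\unif^{a_j}\Ring$, which is precisely $H^1_{\tors}$ by the structure theorem, establishing (iv); the remaining free summand $\im(\Pi_{\free})$ then maps isomorphically onto $H^1_{\free}$ under $q$, establishing (iii). Reduction modulo $\unif^k$ commutes with any $\Ring$-linear endomorphism represented by a matrix over $\Ring$, so commutation with $\Pi_{\ker}, \Pi_{\sat}, \Pi_{\free}$ is automatic from their integral matrix form. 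I expect the main conceptual subtlety to be the uniqueness half of the saturation claim, which requires invoking both the structure of modules over a PID (to upgrade torsion-freeness to freeness) and the fact that summands of free modules are free; every other step is essentially a change-of-basis bookkeeping exercise once the SNF diagonalization from Theorem A is in hand.
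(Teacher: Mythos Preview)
Your proof is correct and matches the paper's development almost exactly: the paper establishes the minimal-direct-summand property of saturation in Lemma~\ref{lem:saturation-properties} by the same torsion-freeness argument you give, defines the idempotents $\Pi_{\ker} = VE_0V^{-1}$ and $\Pi_{\sat} = U^{-1}E_1U$ in Definition~\ref{def:projectors} exactly as you do, and records (iv) in Corollary~\ref{cor:saturation-summand}. The paper's formal proof block for Theorem~B takes a slightly more abstract route---constructing the idempotents by choosing a $K$-linear splitting of $0 \to \im \coboundary \to C^1 \to H^1 \to 0$ after tensoring with the fraction field and then clearing denominators---but this alternative presentation ultimately reduces to the same SNF-based projectors, and your direct approach via the adapted bases $\{v'_j\}$, $\{u'_j\}$ is both cleaner and closer to the paper's surrounding exposition.
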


\medskip

\begin{thmC}[Truncated Stability of Arithmetic Barcodes]
\label{thm:C-truncated}
Let $\coboundary, \coboundary': C^0(G;\sheaf{F}) \to C^1(G;\sheaf{F})$ be coboundary operators over a discrete valuation ring $(\Ring, (\unif))$ with Smith normal form exponent multisets $\{a_j(d)\}$ and $\{a_j(d')\}$ respectively. If 
\[
\coboundary \equiv \coboundary' \pmod{\unif^m}
\]
for some $m \ge 1$, then for every $k < m$ the digit connecting maps coincide:
\[
\Digit{k}(\coboundary) = \Digit{k}(\coboundary')
\quad\text{as maps}\quad
H^0(G;\sheaf{F}/\unif^k\sheaf{F}) \longrightarrow H^1(G;\sheaf{F}/\unif\sheaf{F}),
\]
hence $d_k(\coboundary) = d_k(\coboundary')$ for all $k < m$. Consequently, the multiplicities of all bar lengths $\ell < m$ coincide:
\[
\#\{j : a_j(\coboundary) = \ell\} = \#\{j : a_j(\coboundary') = \ell\}
\quad\text{for all}\quad 1 \le \ell < m.
\]
Equivalently, the truncated barcodes
\[
\mathrm{Bar}^1_\unif(\coboundary) \cap [0,m) = \mathrm{Bar}^1_\unif(\coboundary') \cap [0,m)
\]
are identical as multisets of intervals, and the truncated valuation persistence modules $\{V^1_k(\coboundary)\}_{0 \le k < m}$ and $\{V^1_k(\coboundary')\}_{0 \le k < m}$ are isomorphic.
\end{thmC}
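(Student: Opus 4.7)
The plan is to isolate one observation -- that the digit connecting homomorphism $\Digit{k}$ is determined entirely by $\coboundary$ modulo $\unif^{k+1}$ -- and then reduce every assertion of the theorem to it, using Theorem~A to translate digit data into bar multiplicities.

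First, I would make the cochain-level description of $\Digit{k}$ explicit via the zig-zag lemma for the short exact sequence $0 \to \unif^k\sheaf{F}/\unif^{k+1}\sheaf{F} \to \sheaf{F}/\unif^{k+1}\sheaf{F} \to \sheaf{F}/\unif^k\sheaf{F} \to 0$. A class $[\xi] \in H^0(G; \sheaf{F}/\unif^k\sheaf{F})$ lifts to $\tilde\xi \in C^0(G; \sheaf{F})$ with $\coboundary\tilde\xi \in \unif^k C^1(G; \sheaf{F})$; writing $\coboundary\tilde\xi = \unif^k y$ (valid since $C^1$ is $\unif$-torsion free), $\Digit{k}([\xi])$ is the class of $y \bmod \unif$ in $H^1(G; \sheaf{F}/\unif\sheaf{F})$. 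Inspecting this recipe, the source $H^0(G; \sheaf{F}/\unif^k\sheaf{F}) = \ker(\coboundary \bmod \unif^k)$ depends only on $\coboundary \bmod \unif^k$; the target $H^1(G; \sheaf{F}/\unif\sheaf{F})$ on $\coboundary \bmod \unif$; and the class $y \bmod \unif$ is sensitive only to $\coboundary\tilde\xi$ modulo $\unif^{k+1}$, hence to $\coboundary \bmod \unif^{k+1}$. Well-definedness (independence of the lift $\tilde\xi$) is the standard zig-zag computation.

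Second, I would apply the hypothesis. The inequality $k < m$ gives $k+1 \le m$, so $\coboundary \equiv \coboundary' \pmod{\unif^{k+1}}$. By the previous paragraph the cochain formulas for $\Digit{k}(\coboundary)$ and $\Digit{k}(\coboundary')$ agree pointwise on a common source and land in a common target; hence the two maps coincide, and in particular $\dim_{\Ring/\unif}\im \Digit{k}(\coboundary) = \dim_{\Ring/\unif}\im \Digit{k}(\coboundary')$ for every $k < m$.

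Finally, I would invoke Theorem~A's identity $\#\{j : a_j = \ell\} = \dim_{\Ring/\unif}\im \Digit{\ell} - \dim_{\Ring/\unif}\im \Digit{\ell-1}$ for $\ell \ge 1$. For each $1 \le \ell < m$, both $\ell$ and $\ell-1$ lie strictly below $m$, so the corresponding digit dimensions agree between $\coboundary$ and $\coboundary'$ and the bar multiplicities match -- this is the truncated-barcode claim. The persistence-module statement follows in the same vein: each $V^1_k$ with $k < m$ is assembled from $H^1(G; \sheaf{F}/\unif^k\sheaf{F})$ together with natural reductions, all determined by $\coboundary \bmod \unif^j$ for some $j \le k+1 \le m$, so the two truncated towers coincide strictly, not merely up to isomorphism. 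The only genuine technical step is paragraph one -- cleanly pinning the dependence of $\Digit{k}$ to $\unif^{k+1}$ rather than to some higher power -- after which the rest is essentially formal.
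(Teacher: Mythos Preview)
Your proposal is correct and follows essentially the same approach as the paper: isolate the locality of $\Digit{k}$ (the paper packages this as a separate Lemma, proved by noting that the entire short exact sequence of complexes lives over $\Ring/\unif^{k+1}$), then use $k<m\Rightarrow k+1\le m$ and Theorem~A's difference formula to transfer to bar multiplicities. Your explicit zig-zag description of $\Digit{k}$ is a slightly more hands-on version of the paper's one-line appeal to functoriality of connecting homomorphisms, but the content is identical; the one small overstatement is that the truncated persistence modules $V^1_k=\gr^k H^1(G;\sheaf{F})$ are defined via the integral $H^1$, which is not literally the same for $\coboundary$ and $\coboundary'$, so ``coincide strictly'' should be ``are canonically isomorphic'' unless you first pass to the equivalent mod-$\unif^{k+1}$ description.
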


These three results form a coherent story. Theorem A (Digit-SNF Dictionary) shows that hierarchical information across precision levels exactly encodes the algebraic invariants: the valuation filtration and Smith normal form are two views of the same data. Theorem B (Saturation Splitting) leverages this to construct canonical integral representatives for cohomology classes without any geometric structure. Theorem C (Truncated Stability) establishes that arithmetic barcodes are robust under perturbations: when two coboundaries agree to sufficient precision, their truncated barcodes coincide, making the invariants numerically stable and suitable for applications involving measured data.

All our results are constructive and algorithmic. Smith normal form over $\Ring$ can be computed via elementary row/column operations or Hensel lifting from mod $\unif$ \cite{Kannan1979}. The digit maps $\Digit{k}$ are computed as connecting homomorphisms in long exact sequences, requiring cohomology computations over the quotient rings $\Ring/\unif^k$ with dimensions measured over the residue field $\Ring/\unif$. The Digit-SNF Dictionary provides two routes to the same invariants: compute the sequence $\{\dim \im(\Digit{k})\}$ from cohomology at successive precisions, or compute the Smith form directly and read off exponents. This redundancy enables error-checking and offers flexibility based on the computational context.

\section{Preliminaries: Network Sheaves over a Discrete Valuation Ring}
\label{sec:prelims}

Experts can skip this recollection of basic facts. 

\subsection{Discrete Valuation Rings}
\label{ssec:dvr}

A \emph{discrete valuation ring} (DVR) is an integral domain $\Ring$ with exactly one nonzero prime ideal $(\unif)$, where $\unif$ is called the \emph{uniformizer}. Equivalently, a DVR is a Noetherian local domain of Krull dimension $1$ with principal maximal ideal; in particular, a local principal ideal domain that is not a field.
Every nonzero element $x \in \Ring$ can be written uniquely as $x = \unif^n u$ where $n \ge 0$ and $u \in \Ring^\times$ is a unit. The integer $n = \valpi(x)$ is the \emph{valuation} of $x$, extended by $\valpi(0) = \infty$. The valuation satisfies $\valpi(xy) = \valpi(x) + \valpi(y)$ and $\valpi(x+y) \ge \min\{\valpi(x), \valpi(y)\}$ with equality when $\valpi(x) \neq \valpi(y)$.
The quotient $\mathbb{F} := \Ring/\unif$ is a field, called the \emph{residue field}.\footnote{In our main examples ($\Zp$, $\mathcal{O}_K$, $\mathbb{F}_q[[t]]$) it is finite, but finiteness or positive characteristic is not required for the algebra developed here.}
The fraction field $K := \mathrm{Frac}(\Ring)$ extends the valuation by $\valpi(x/y) = \valpi(x) - \valpi(y)$, making $K$ a discrete valuation field. 
The valuation induces a non-Archimedean absolute value $|x| := c^{-\valpi(x)}$ for any fixed real base $c>1$, defining a metric on $K$ that satisfies the strong triangle inequality $|x+y| \le \max\{|x|,|y|\}$.

\begin{example}
\label{ex:dvr-examples}
Standard examples of DVRs include:
\begin{enumerate}[label=\textup{(\roman*)}]
\item The $p$-adic integers $\Ring = \Zp$ for a prime $p$, with uniformizer $\unif = p$ and residue field $\mathbb{F}_p$. Elements are infinite base-$p$ expansions $\sum_{k=0}^\infty a_k p^k$ with $a_k \in \{0,\ldots,p-1\}$. The completion of $\mathbb{Z}$ at $(p)$ yields the field $\Qp$ of $p$-adic numbers.

\item Formal power series $\Ring = \mathbb{F}_q[[t]]$ over a finite field, with uniformizer $\unif = t$ and residue field $\mathbb{F}_q$. The fraction field $K = \mathbb{F}_q((t))$ consists of Laurent series.

\item The ring of integers $\mathcal{O}_K$ in any finite extension $K/\Qp$ is a DVR. In the unramified or totally ramified cases, one can describe the uniformizer explicitly.
\end{enumerate}
\end{example}

\begin{remark}
Throughout this paper, we write $\Ring$ for a general DVR with uniformizer $\unif$ and residue field $\mathbb{F} = \Ring/\unif$. Results in §§\ref{sec:prelims}--\ref{sec:snf} hold for any DVR. For concreteness and to fix intuition, we often specialize notation to $\Ring = \Zp$, $\unif = p$, and $\mathbb{F} = \mathbb{F}_p$, though the proofs remain valid in full generality. 
\end{remark}

Finitely generated modules over a DVR have simple structure: any such module decomposes as $M \cong \Ring^b \oplus \bigoplus_{j=1}^r \Ring/\unif^{a_j}$, where $b = \rank_\Ring(M)$ and the exponents $0 < a_1 \le \cdots \le a_r$ are uniquely determined. The free part $\Ring^b$ and torsion part $\bigoplus_j \Ring/\unif^{a_j}$ are canonical direct summands, and the multiset $\{a_j\}$ records the invariant factors. For network sheaf cohomology, these invariant factors will become the arithmetic barcode.

\subsection{Network Sheaves: Algebraic Foundations}
\label{ssec:sheaves}

We recall the basic theory of network sheaves, following the notation of prior work. Throughout this section, $\Ring$ denotes a discrete valuation ring with uniformizer $\unif$.

\begin{definition}
\label{def:network-sheaf}
A \emph{network sheaf} $\sheaf{F}$ of $\Ring$-modules on a finite graph $G = (V,E)$ consists of:
\begin{itemize}
\item For each vertex $v \in V$, an $\Ring$-module $\stalk{\sheaf{F}}{v}$ called the \emph{vertex stalk}.
\item For each edge $e \in E$, an $\Ring$-module $\stalk{\sheaf{F}}{e}$ called the \emph{edge stalk}.
\item For each incidence $v \to e$, an $\Ring$-linear map $\res{\sheaf{F}}{v}{e}: \stalk{\sheaf{F}}{v} \to \stalk{\sheaf{F}}{e}$ called the \emph{restriction map}.
\end{itemize}
\end{definition}


\begin{definition}
\label{def:sheaf-morphism}
A \emph{morphism} $\phi: \sheaf{F} \to \sheaf{G}$ of network sheaves consists of $\Ring$-linear maps $\phi_\sigma: \stalk{\sheaf{F}}{\sigma} \to \stalk{\sheaf{G}}{\sigma}$ for each cell $\sigma$ such that restriction maps commute:
\[
\begin{tikzcd}
\stalk{\sheaf{F}}{v} \arrow{r}{\phi_v} \arrow{d}{\res{\sheaf{F}}{v}{e}} & \stalk{\sheaf{G}}{v} \arrow{d}{\res{\sheaf{G}}{v}{e}} \\
\stalk{\sheaf{F}}{e} \arrow{r}{\phi_e} & \stalk{\sheaf{G}}{e}
\end{tikzcd}
\]
\end{definition}

The cochains and cohomology of a network sheaf generalize the cellular theory:

\begin{definition}
\label{def:sheaf-cochains}
For a network sheaf $\sheaf{F}$ on $G$, the \emph{cochain groups} are:
\[
C^i(G; \sheaf{F}) = \prod_{\sigma \in \Sigma_i(G)} \stalk{\sheaf{F}}{\sigma}
\]
where $\Sigma_0(G) = V$ and $\Sigma_1(G) = E$. The \emph{coboundary} $\coboundary: C^0(G; \sheaf{F}) \to C^1(G; \sheaf{F})$ is defined explicitly on a $0$-cochain $s$ for each oriented edge $e:u\to v$ via:
\[
(\coboundary s)(e) = \res{\sheaf{F}}{u}{e}(s(u)) - \res{\sheaf{F}}{v}{e}(s(v)) .
\]
\end{definition}

Since $G$ is 1-dimensional, we have $C^2(G; \sheaf{F}) = 0$ (there are no 2-cells), hence $\coboundary^2 = 0$ trivially. The \emph{sheaf cohomology} $H^i(G; \sheaf{F})$ is defined as usual: $H^0 = \ker \coboundary$ and $H^1 = \coker\coboundary = C^1/\im \coboundary$.

\begin{remark}
\label{rem:H0-free-H1-tors}
Throughout this paper, we assume all stalks are finite-rank free $\Ring$-modules. Since $H^0(G;\sheaf{F}) = \ker \coboundary$ is a submodule of the free module $C^0(G;\sheaf{F})$ over the principal ideal domain $\Ring$, it is itself free and hence torsion-free. By contrast, $H^1(G;\sheaf{F}) = C^1(G;\sheaf{F})/\im \coboundary$ is a quotient of a free module and generically carries $\unif$-power torsion. The structure theorem for finitely generated modules over a PID gives $H^1 \cong \Ring^b \oplus \bigoplus_{j=1}^r \Ring/\unif^{a_j}$, where $b$ is the rank of the free part and the exponents $\{a_j\}$ record the torsion. This torsion is measured precisely by the Bockstein homomorphism and the Smith normal form exponents, which are the subjects of Sections~\ref{sec:bockstein-digit} and~\ref{sec:snf}.
\end{remark}

\section{Valuation Filtration and Digit Sequences}
\label{sec:bockstein-digit}

The multiplicative structure $\unif^k \Ring \subseteq \Ring$ induces a decreasing 
filtration on cohomology whose successive quotients encode the complete torsion 
structure. We develop the resulting long exact sequences and prove that for graphs, 
the associated Bockstein spectral sequence collapses at $E_2$, making all torsion 
information explicit at the first-page level \cite{McCleary2000}.

\subsection{The Valuation Filtration}
\label{ssec:val-filtration}

\begin{definition}
\label{def:val-filtration}
For $k \geq 0$, the \emph{valuation filtration} on cochains is
\[
C^i_{(\geq k)}(G; \sheaf{F}) := \unif^k C^i(G; \sheaf{F}),
\]
the submodule of cochains with all coordinates divisible by $\unif^k$. This yields 
a decreasing, separated, exhaustive filtration
\[
C^i = C^i_{(\geq 0)} \supseteq C^i_{(\geq 1)} \supseteq C^i_{(\geq 2)} \supseteq 
\cdots
\]
with successive quotients $C^i_{(\geq k)}/C^i_{(\geq k+1)} \cong C^i(G; \sheaf{F}/\unif\sheaf{F})$ 
canonically isomorphic as $\mathbb{F}$-vector spaces.
\end{definition}

\begin{lemma}
\label{lem:filtration-properties}
The valuation filtration satisfies:
\begin{enumerate}[label=\textup{(\roman*)}]
\item $\coboundary(C^i_{(\geq k)}) \subseteq C^{i+1}_{(\geq k)}$ for all $k$, 
hence each $C^\bullet_{(\geq k)}$ is a subcomplex.
\item The canonical isomorphism $\phi_k: \unif^k C^i/\unif^{k+1} C^i 
\xrightarrow{\cong} C^i/\unif C^i$ given by $\phi_k([\unif^k y]) = [y]$ is 
$\mathbb{F}$-linear and independent of $k$.
\item For $\Ring = \Zp$ with compatible norms, each $C^i_{(\geq k)}$ is a closed 
ball of radius $p^{-k}$, hence closed in the ultrametric topology.
\end{enumerate}
\end{lemma}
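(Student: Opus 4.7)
The plan is to handle each of the three parts by a short direct argument; none requires machinery beyond the $\Ring$-linearity of $\coboundary$ and the freeness (hence torsion-freeness) of $C^i(G;\sheaf{F})$ as an $\Ring$-module.

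For (i), I would simply invoke $\Ring$-linearity of $\coboundary$. If $x \in C^i_{(\geq k)}$, write $x = \unif^k y$ for some $y \in C^i$, so $\coboundary(x) = \unif^k \coboundary(y) \in \unif^k C^{i+1} = C^{i+1}_{(\geq k)}$. Stability under $\coboundary$ plus inclusion of filtrations gives a filtered subcomplex structure on the cochain complex, which is all that is claimed.

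For (ii), the first task is well-definedness of $\phi_k([\unif^k y]) := [y]$. If $\unif^k y \equiv \unif^k y' \pmod{\unif^{k+1} C^i}$, then $\unif^k(y-y') \in \unif^{k+1} C^i$; since $C^i$ is free over the DVR $\Ring$ (hence torsion-free), multiplication by $\unif^k$ is injective on $C^i$, so $y - y' \in \unif C^i$ and $[y] = [y']$ in $C^i/\unif C^i$. The resulting map is $\Ring$-linear, annihilates $\unif$ on both sides, and therefore descends to an $\mathbb{F}$-linear map. Surjectivity is clear from the formula; injectivity is another direct application of torsion-freeness. The phrase \emph{independent of $k$} amounts to the observation that the target $C^i/\unif C^i$ does not depend on $k$, and the recipe $[\unif^k y] \mapsto [y]$ is uniformly defined across $k$, so the graded pieces $\unif^k C^i/\unif^{k+1}C^i$ are all canonically identified with the same $\mathbb{F}$-vector space.

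For (iii), specializing to $\Ring = \Zp$, I would equip $C^i \cong \Zp^{n_i}$ with the supremum norm $\supnorm{c} = \max_\sigma \pnorm{c(\sigma)}$. Then $c \in C^i_{(\geq k)} = p^k C^i$ if and only if every coordinate $c(\sigma)$ lies in $p^k \Zp$, equivalently $\pnorm{c(\sigma)} \le p^{-k}$ for all $\sigma$, equivalently $\supnorm{c} \le p^{-k}$. This identifies $C^i_{(\geq k)}$ with the closed ball of radius $p^{-k}$ centered at $0$, which is closed (and in fact clopen) in the ultrametric topology because the sup-norm is continuous and the defining inequality is non-strict.

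There is no real obstacle: the only point demanding any care is the well-definedness argument in (ii), where torsion-freeness of $C^i$ is essential. Were the stalks not free, one could imagine $\unif$-torsion classes obstructing both well-definedness and injectivity of $\phi_k$; this is precisely why the preliminaries restrict to free stalks (cf.\ Remark~\ref{rem:H0-free-H1-tors}).
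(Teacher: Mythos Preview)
Your proposal is correct and follows essentially the same approach as the paper's proof: $\Ring$-linearity for (i), well-definedness via torsion-freeness of the free module $C^i$ for (ii), and identification with the closed sup-norm ball for (iii). Your version is slightly more explicit in spelling out the sup norm and the role of freeness, but the argument is the same.
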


\begin{proof}
(i) Since $\coboundary$ is $\Ring$-linear and $\unif^k C^i = \{\unif^k s : s \in C^i\}$, 
we have $\coboundary(\unif^k s) = \unif^k \coboundary(s) \in \unif^k C^{i+1}$.

(ii) For any $[\unif^k y] \in \unif^k C^i/\unif^{k+1} C^i$, if $\unif^k y \equiv 
\unif^k y' \pmod{\unif^{k+1}}$, then $\unif^k(y-y') \in \unif^{k+1} C^i$. Since 
$C^i$ is free over $\Ring$, this implies $y - y' \in \unif C^i$, so $[y] = [y']$ 
in $C^i/\unif C^i$. The map is clearly surjective and $\mathbb{F}$-linear, with 
trivial kernel, hence an isomorphism. Independence from $k$ is manifest in the 
formula.

(iii) In the ultrametric setting, $C^i_{(\geq k)} = \{s \in C^i : \|s\| \leq p^{-k}\}$ 
is a closed ball, hence closed.
\end{proof}

\subsection{Fundamental Short Exact Sequences}
\label{ssec:fundamental-ses}

The multiplicative hierarchy of ideals $\Ring \supset (\unif) \supset (\unif^2) 
\supset \cdots$ yields a web of exact sequences of sheaves.

\begin{lemma}
\label{lem:sheaf-filtration-iso}
For any network sheaf $\sheaf{F}$ with stalks that are free $\Ring$-modules and for all $k \geq 0$, the map
\[
\phi_k: \unif^k\sheaf{F}/\unif^{k+1}\sheaf{F} \longrightarrow \sheaf{F}/\unif\sheaf{F}
\]
defined stalkwise by $\phi_k([\unif^k x]) = [x]$ is a natural isomorphism of network sheaves compatible with all restriction maps.
\end{lemma}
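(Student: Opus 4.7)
The plan is to reduce the lemma to a stalkwise statement about free $\Ring$-modules and then verify naturality with respect to the restriction maps. The key algebraic fact is that a free module over a domain is torsion-free, so multiplication by $\unif^k$ is injective on each stalk; this is what makes the shift $[\unif^k x] \mapsto [x]$ a well-defined isomorphism between the successive graded pieces of the $\unif$-adic filtration and the mod-$\unif$ reduction. Essentially, the argument of \Cref{lem:filtration-properties}(ii) carried out at the cochain level is applied here to each stalk individually, then globalized to a morphism of network sheaves.

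First I would fix a cell $\sigma$ and let $M = \stalk{\sheaf{F}}{\sigma}$, which is free of finite rank over $\Ring$. To show well-definedness of $\phi_k^M: \unif^k M/\unif^{k+1} M \to M/\unif M$, suppose $\unif^k x \equiv \unif^k x' \pmod{\unif^{k+1} M}$, so $\unif^k(x - x') = \unif^{k+1} y$ for some $y \in M$. Rewriting gives $\unif^k(x - x' - \unif y) = 0$, and torsion-freeness of the free module $M$ forces $x - x' = \unif y \in \unif M$, hence $[x] = [x']$ in $M/\unif M$. Surjectivity is immediate since $[x] = \phi_k^M([\unif^k x])$; injectivity follows symmetrically, as $[x] = 0$ in $M/\unif M$ means $x = \unif y$ and then $\unif^k x = \unif^{k+1} y \in \unif^{k+1} M$. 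The map is manifestly $\Ring$-linear, and since $\unif$ annihilates both source and target, it is in fact an $\mathbb{F}$-linear isomorphism.

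Next I would check compatibility with the restriction maps, which upgrades the pointwise isomorphisms $\{\phi_k^\sigma\}$ to a morphism of network sheaves. For any incidence $v \face e$, the map $\res{\sheaf{F}}{v}{e}$ is $\Ring$-linear, so it preserves the $\unif$-adic filtration and descends to maps on both $\unif^k \sheaf{F}/\unif^{k+1}\sheaf{F}$ and $\sheaf{F}/\unif\sheaf{F}$. The diagram
\[
\begin{tikzcd}[ampersand replacement=\&]
\unif^k \stalk{\sheaf{F}}{v}/\unif^{k+1}\stalk{\sheaf{F}}{v} \arrow{r}{\phi_k^v} \arrow{d}{\overline{\res{\sheaf{F}}{v}{e}}} \& \stalk{\sheaf{F}}{v}/\unif\stalk{\sheaf{F}}{v} \arrow{d}{\overline{\res{\sheaf{F}}{v}{e}}} \\
\unif^k \stalk{\sheaf{F}}{e}/\unif^{k+1}\stalk{\sheaf{F}}{e} \arrow{r}{\phi_k^e} \& \stalk{\sheaf{F}}{e}/\unif\stalk{\sheaf{F}}{e}
\end{tikzcd}
\]
commutes by tracing $[\unif^k x]$ around both routes and observing that each yields $[\res{\sheaf{F}}{v}{e}(x)]$, using only $\Ring$-linearity of the restriction map. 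Naturality in $\sheaf{F}$ (i.e., compatibility with sheaf morphisms $\phi: \sheaf{F} \to \sheaf{G}$) follows by the identical diagram chase with $\phi_\sigma$ in place of $\res{\sheaf{F}}{v}{e}$.

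The proof is essentially routine; there is no genuine obstacle, only bookkeeping. The single delicate point is the use of torsion-freeness to cancel $\unif^k$ in the well-definedness check, which is why the freeness hypothesis on stalks is stated explicitly. (Torsion-freeness alone would suffice, but freeness is the standing assumption of the paper.)
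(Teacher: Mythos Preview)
Your proof is correct and follows essentially the same approach as the paper's: a stalkwise well-definedness check using torsion-freeness of free $\Ring$-modules to cancel $\unif^k$, followed by surjectivity/injectivity, and then the identical commutative-diagram verification of compatibility with restriction maps. You additionally spell out naturality in $\sheaf{F}$ under sheaf morphisms and note that torsion-freeness (rather than full freeness) suffices, which are minor elaborations but not departures from the paper's argument.
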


\begin{proof}
At each stalk $\stalk{\sheaf{F}}{\sigma}$, the map $\phi_{k,\sigma}: \unif^k\stalk{\sheaf{F}}{\sigma}/\unif^{k+1}\stalk{\sheaf{F}}{\sigma} \to \stalk{\sheaf{F}}{\sigma}/\unif\stalk{\sheaf{F}}{\sigma}$ given by $[\unif^k x] \mapsto [x]$ is well-defined: if $\unif^k x \equiv \unif^k x' \pmod{\unif^{k+1}}$, then $\unif^k(x - x') = \unif^{k+1} y$ for some $y$, hence $x - x' = \unif y$ since $\stalk{\sheaf{F}}{\sigma}$ is free over $\Ring$ and $\unif$ is not a zero-divisor. Thus $[x] = [x']$ in $\stalk{\sheaf{F}}{\sigma}/\unif\stalk{\sheaf{F}}{\sigma}$. The map is clearly $\mathbb{F}$-linear, surjective (since $[x]$ is the image of $[\unif^k x]$), and injective (since $[\unif^k x] \mapsto 0$ implies $x \in \unif\stalk{\sheaf{F}}{\sigma}$, hence $\unif^k x \in \unif^{k+1}\stalk{\sheaf{F}}{\sigma}$).

To verify compatibility with restrictions, consider an incidence $\tau \to \sigma$ and the restriction map $\res{\sheaf{F}}{\tau}{\sigma}: \stalk{\sheaf{F}}{\tau} \to \stalk{\sheaf{F}}{\sigma}$. Since restriction maps are $\Ring$-linear, they commute with multiplication by $\unif^k$ and pass to quotients. The diagram
\[
\begin{tikzcd}
\unif^k\stalk{\sheaf{F}}{\tau}/\unif^{k+1}\stalk{\sheaf{F}}{\tau} \arrow{r}{\phi_{k,\tau}} \arrow{d}{\res{\sheaf{F}}{\tau}{\sigma}} & \stalk{\sheaf{F}}{\tau}/\unif\stalk{\sheaf{F}}{\tau} \arrow{d}{\res{\sheaf{F}}{\tau}{\sigma}} \\
\unif^k\stalk{\sheaf{F}}{\sigma}/\unif^{k+1}\stalk{\sheaf{F}}{\sigma} \arrow{r}{\phi_{k,\sigma}} & \stalk{\sheaf{F}}{\sigma}/\unif\stalk{\sheaf{F}}{\sigma}
\end{tikzcd}
\]
commutes, since both paths send $[\unif^k x]$ to $[\res{\sheaf{F}}{\tau}{\sigma}(x)]$.
\end{proof}

\begin{proposition}
\label{prop:basic-ses}
For $k \geq 1$, the following are short exact sequences of network sheaves:
\begin{enumerate}[label=\textup{(\roman*)}]
\item $0 \to \sheaf{F} \xrightarrow{\unif^k} \sheaf{F} \to \sheaf{F}/\unif^k\sheaf{F} \to 0$
\item $0 \to \unif^k\sheaf{F}/\unif^{k+1}\sheaf{F} \to \sheaf{F}/\unif^{k+1}\sheaf{F} 
\to \sheaf{F}/\unif^k\sheaf{F} \to 0$
\end{enumerate}
where all maps are induced by the natural inclusions and quotient maps, applied 
stalkwise.
\end{proposition}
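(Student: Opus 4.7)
The plan is to check each sequence stalkwise and then verify compatibility with the restriction maps. Because both sequences (i) and (ii) are defined termwise by natural constructions on modules (multiplication by a scalar, projections, inclusions), the statement reduces to a module-theoretic assertion at each cell $\sigma$, together with a compatibility diagram for each incidence $\tau\to\sigma$. Throughout, the key structural input is that every stalk $\stalk{\sheaf{F}}{\sigma}$ is free over $\Ring$, so $\unif$ is a non-zero-divisor on $\stalk{\sheaf{F}}{\sigma}$. Much of the necessary well-definedness is already recorded in \Cref{lem:sheaf-filtration-iso}, so the present proposition is essentially bookkeeping.

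For sequence (i), I would note that multiplication by $\unif^k$ on $\stalk{\sheaf{F}}{\sigma}$ is injective (since $\unif$ is a non-zero-divisor on the free module $\stalk{\sheaf{F}}{\sigma}$), that the canonical projection $\stalk{\sheaf{F}}{\sigma}\to\stalk{\sheaf{F}}{\sigma}/\unif^k\stalk{\sheaf{F}}{\sigma}$ is surjective, and that its kernel is by definition $\unif^k\stalk{\sheaf{F}}{\sigma}=\im(\unif^k)$. For sequence (ii), the left-hand map is the one induced by inclusion $\unif^k\stalk{\sheaf{F}}{\sigma}\hookrightarrow\stalk{\sheaf{F}}{\sigma}$ followed by projection modulo $\unif^{k+1}\stalk{\sheaf{F}}{\sigma}$; it is well-defined since $\unif^{k+1}\stalk{\sheaf{F}}{\sigma}$ trivially lies in its own submodule, and it is injective because any $y\in \unif^k\stalk{\sheaf{F}}{\sigma}$ mapping to zero in $\stalk{\sheaf{F}}{\sigma}/\unif^{k+1}\stalk{\sheaf{F}}{\sigma}$ already lies in $\unif^{k+1}\stalk{\sheaf{F}}{\sigma}$. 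The right-hand map is the surjection $\stalk{\sheaf{F}}{\sigma}/\unif^{k+1}\stalk{\sheaf{F}}{\sigma}\twoheadrightarrow\stalk{\sheaf{F}}{\sigma}/\unif^k\stalk{\sheaf{F}}{\sigma}$, and exactness in the middle is an instance of the third isomorphism theorem applied to the chain $\unif^{k+1}\stalk{\sheaf{F}}{\sigma}\subseteq \unif^k\stalk{\sheaf{F}}{\sigma}\subseteq \stalk{\sheaf{F}}{\sigma}$.

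To promote these termwise exact sequences to exact sequences of network sheaves, I would check that each restriction map $\res{\sheaf{F}}{\tau}{\sigma}$ commutes with all three sheaf morphisms. This is immediate from $\Ring$-linearity: restriction maps send $\unif^k\stalk{\sheaf{F}}{\tau}$ into $\unif^k\stalk{\sheaf{F}}{\sigma}$ and descend canonically to quotients $\stalk{\sheaf{F}}{\tau}/\unif^j\stalk{\sheaf{F}}{\tau}\to\stalk{\sheaf{F}}{\sigma}/\unif^j\stalk{\sheaf{F}}{\sigma}$ for any $j$. The resulting commuting squares coincide with those already exhibited in \Cref{lem:sheaf-filtration-iso}. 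There is no real obstacle here; the only point worth flagging is that the freeness hypothesis on stalks is used at exactly one place, namely the injectivity of multiplication by $\unif^k$ in (i) and, implicitly, the cleanness of the containment chain used in (ii).
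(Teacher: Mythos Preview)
Your proof is correct and follows essentially the same approach as the paper: check exactness stalkwise using injectivity of $\unif^k$ on free modules for (i) and the third isomorphism theorem for (ii). You include an explicit verification of compatibility with restriction maps that the paper leaves implicit, which is a reasonable addition; your closing remark that freeness is needed for the containment chain in (ii) is a slight overstatement, since $\unif^{k+1}M\subseteq\unif^kM\subseteq M$ holds for any $\Ring$-module.
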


\begin{proof}
Exactness is checked stalkwise. Since each stalk is a free $\Ring$-module and 
$\unif$ is not a zero-divisor in $\Ring$, multiplication by $\unif^k$ is injective. 
The kernel of the quotient map $\sheaf{F} \to \sheaf{F}/\unif^k\sheaf{F}$ is 
precisely $\unif^k\sheaf{F}$ by definition. For sequence (ii), the third isomorphism 
theorem gives $(\sheaf{F}/\unif^{k+1}\sheaf{F})/(\unif^k\sheaf{F}/\unif^{k+1}\sheaf{F}) 
\cong \sheaf{F}/\unif^k\sheaf{F}$.
\end{proof}

\subsection{Long Exact Sequences and the Bockstein Homomorphism}
\label{ssec:bockstein}

Applying the cohomology functor to the short exact sequences of Proposition 
\ref{prop:basic-ses} yields connecting homomorphisms that organize the torsion 
structure.

\begin{theorem}[Long Exact Sequences]
\label{thm:les}
{\cite{Rotman2009,McCleary2000}}
Sequence (i) of Proposition \ref{prop:basic-ses} induces a long exact sequence
\[
0 \to H^0(G; \sheaf{F}) \xrightarrow{\unif^k} H^0(G; \sheaf{F}) \to 
H^0(G; \sheaf{F}/\unif^k\sheaf{F}) \xrightarrow{\Conn{k}} H^1(G; \sheaf{F}) 
\xrightarrow{\unif^k} H^1(G; \sheaf{F}) \to H^1(G; \sheaf{F}/\unif^k\sheaf{F}) \to 0
\]
where the final zero uses $H^2(G; \sheaf{F}) = 0$ since $G$ is a graph. The 
\emph{connecting homomorphism} $\Conn{k}$ takes values in the $\unif^k$-torsion 
submodule $H^1(G; \sheaf{F})[\unif^k] := \{x \in H^1 : \unif^k x = 0\}$.

Similarly, sequence (ii) induces
\[
\cdots \to H^0(G; \sheaf{F}/\unif^k\sheaf{F}) \xrightarrow{\Digit{k}} 
H^1(G; \unif^k\sheaf{F}/\unif^{k+1}\sheaf{F}) \to 
H^1(G; \sheaf{F}/\unif^{k+1}\sheaf{F}) \to H^1(G; \sheaf{F}/\unif^k\sheaf{F}) \to 0
\]
where $\Digit{k}$ is the \emph{digit connecting map}.
\end{theorem}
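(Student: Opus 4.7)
The plan is to reduce this theorem to the standard snake lemma (zig-zag) construction of the long exact sequence in cohomology, once the short exact sequences of sheaves in Proposition~\ref{prop:basic-ses} are lifted to short exact sequences of cochain complexes. The argument splits naturally into three steps: cochain lifting, snake lemma plus truncation, and explicit identification of the torsion containment.

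First I would verify that applying the functor $C^\bullet(G;-)$ to each sequence in Proposition~\ref{prop:basic-ses} yields a short exact sequence of $\Ring$-cochain complexes. Since $C^i(G;\sheaf{F}) = \prod_{\sigma\in\Sigma_i(G)} \stalk{\sheaf{F}}{\sigma}$ is a finite product over cells of stalks that are free $\Ring$-modules, and since $\unif$ is a regular element of $\Ring$, multiplication by $\unif^k$ is injective at every stalk and hence on every $C^i$. The kernel of the stalkwise quotient $\sheaf{F}\to\sheaf{F}/\unif^k\sheaf{F}$ is exactly $\unif^k\sheaf{F}$, and this passes to cochains termwise because finite products are exact. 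For sequence~(ii), a parallel check uses the third isomorphism theorem stalkwise. The coboundary $\coboundary$ is $\Ring$-linear, so it commutes with multiplication by $\unif^k$ and with all the quotient maps, and hence these form morphisms of cochain complexes.

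Second, with the cochain-level short exact sequences in hand, the zig-zag lemma produces long exact sequences in cohomology. Since $G$ is $1$-dimensional we have $C^i(G;\sheaf{F})=0$ for $i\ge 2$ and thus $H^i(G;-)=0$ for $i\ge 2$; this truncates both long exact sequences immediately after the $H^1$ level, which accounts for the final surjections onto $H^1$ of the quotient sheaf in each displayed sequence. The claimed $\unif^k$-multiplication maps on $H^0$ and $H^1$ in sequence~(i) are induced by the morphism $\sheaf{F}\xrightarrow{\unif^k}\sheaf{F}$ and so are functorial.

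Third, I would trace the zig-zag definition of $\Conn{k}$ to confirm $\mathrm{im}(\Conn{k})\subseteq H^1(G;\sheaf{F})[\unif^k]$. Given a class in $H^0(G;\sheaf{F}/\unif^k\sheaf{F})$ represented by a $0$-cocycle $\bar s$, lift to $s\in C^0(G;\sheaf{F})$; then $\coboundary s$ reduces to $\coboundary\bar s = 0$ modulo $\unif^k$, so $\coboundary s\in\unif^k C^1(G;\sheaf{F})$. Since $C^1$ is $\Ring$-free and $\unif$ is regular, there is a unique $t\in C^1$ with $\coboundary s = \unif^k t$, and by construction $\Conn{k}[\bar s]=[t]$. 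Then $\unif^k[t]=[\unif^k t]=[\coboundary s]=0$ in $H^1$, giving the torsion containment. The argument for $\Digit{k}$ is formally identical, with the target subsequently identified with $H^1(G;\sheaf{F}/\unif\sheaf{F})$ via the canonical isomorphism of Lemma~\ref{lem:sheaf-filtration-iso} whenever desired. Honestly, there is no substantial obstacle in this proof: the only care required is the termwise exactness check in step one, which relies essentially on the freeness of stalks so that $\unif^k$ is not a zero-divisor.
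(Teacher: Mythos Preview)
Your proposal is correct and follows the same approach as the paper, which dispatches the theorem in two sentences by appeal to ``standard functoriality of sheaf cohomology'' and then observes the torsion containment from exactness. You have simply unpacked those words into the underlying snake-lemma mechanics. One minor stylistic difference: for $\mathrm{im}(\Conn{k})\subseteq H^1[\unif^k]$, the paper reads it off directly from the long exact sequence itself (since $\mathrm{im}(\Conn{k})=\ker(\unif^k:H^1\to H^1)$ by exactness at that spot), whereas you re-derive it by chasing the zig-zag; both are fine, though the exactness argument is quicker.
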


\begin{proof}
Standard functoriality of sheaf cohomology applied to the exact sequences of sheaves. 
The image of $\Conn{k}$ lies in $\ker(\unif^k: H^1 \to H^1) = H^1[\unif^k]$ by 
exactness.
\end{proof}

\begin{definition}
\label{def:bockstein-digit}
The \emph{Bockstein homomorphism} $\Bock: H^i(G; \sheaf{F}/\unif\sheaf{F}) \to 
H^{i+1}(G; \sheaf{F}/\unif\sheaf{F})$ is the composition
\[
H^i(G; \sheaf{F}/\unif\sheaf{F}) \xrightarrow{\Conn{1}} H^{i+1}(G; \sheaf{F})[\unif] 
\xrightarrow{\pi} H^{i+1}(G; \sheaf{F})/\unif H^{i+1}(G; \sheaf{F}) \cong 
H^{i+1}(G; \sheaf{F}/\unif\sheaf{F}),
\]
where $\pi$ is the quotient map. The isomorphism $H^{i+1}(G; \sheaf{F})/\unif H^{i+1}(G; \sheaf{F}) \cong H^{i+1}(G; \sheaf{F}/\unif\sheaf{F})$ follows from the long exact sequence associated with $0 \to \sheaf{F} \xrightarrow{\unif} \sheaf{F} \to \sheaf{F}/\unif\sheaf{F} \to 0$, using that $H^{i+2}(G;\sheaf{F}) = 0$ for graphs (so the connecting map from $H^{i+1}(G;\sheaf{F}/\unif)$ lands in zero).
\end{definition}

The key relationship between these connecting maps is:

\begin{proposition}[Digit-Bockstein Compatibility]
\label{prop:digit-bockstein}
Under the canonical isomorphism $\phi_k: \unif^k\sheaf{F}/\unif^{k+1}\sheaf{F} 
\xrightarrow{\cong} \sheaf{F}/\unif\sheaf{F}$ of Lemma~\ref{lem:sheaf-filtration-iso},
the digit map satisfies
\[
H^{i+1}(\phi_k) \circ \Digit{k} = \Bock \circ \rho_k
\]
where $\rho_k: H^i(\sheaf{F}/\unif^k) \to H^i(\sheaf{F}/\unif)$ is the natural 
reduction map. Consequently, for all $k \geq 0$,
\[
\im(\Digit{k}) \subseteq \im(\Bock) \subseteq H^{i+1}(G; \sheaf{F}/\unif\sheaf{F}),
\]
and the dimension bound follows from the chain
\[
\dim_{\mathbb{F}}\im(\Digit{k}) = \dim_{\mathbb{F}}\im(\Bock\circ \rho_k)
\le \dim_{\mathbb{F}}\im(\Bock)\le \dim_{\mathbb{F}} H^{i+1}(G;\sheaf{F})[\unif].
\]
\end{proposition}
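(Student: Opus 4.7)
The plan is to prove the equation $H^{i+1}(\phi_k) \circ \Digit{k} = \Bock \circ \rho_k$ by a unified cocycle-level computation, lift it to an intrinsic naturality statement, and then read off the image and dimension consequences.

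First I would pick a single cocycle representative witnessing all three connecting maps at once. Given $[s] \in H^i(\sheaf{F}/\unif^k)$ represented by a cocycle $s$, choose any lift $\hat{s} \in C^i(\sheaf{F})$. Because $s$ is a mod-$\unif^k$ cocycle, there is a unique $u \in C^{i+1}(\sheaf{F})$ with $\coboundary \hat{s} = \unif^k u$. Unwinding the snake lemma for SES (ii) of Proposition~\ref{prop:basic-ses} gives $\Digit{k}[s] = [\unif^k u \bmod \unif^{k+1}]$, so applying $\phi_k$ strips the factor of $\unif^k$ and yields $H^{i+1}(\phi_k)(\Digit{k}[s]) = [u \bmod \unif]$. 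For the other side, the same $\hat{s}$ lifts $\rho_k[s]$, and rewriting $\coboundary \hat{s} = \unif \cdot (\unif^{k-1} u)$ identifies $\Conn{1}(\rho_k[s]) = [\unif^{k-1} u] \in H^{i+1}(\sheaf{F})[\unif]$, so $\Bock(\rho_k[s]) = \pi[\unif^{k-1} u]$.

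To upgrade these coordinate calculations to an intrinsic identity I would construct a morphism of short exact sequences from SES (i) of Proposition~\ref{prop:basic-ses} at level $k$ to SES (ii) at the same level, with left vertical $y \mapsto \unif^k y$ (which post-composed with $\phi_k$ is the quotient $q: \sheaf{F} \to \sheaf{F}/\unif$), middle vertical the reduction $\sheaf{F} \to \sheaf{F}/\unif^{k+1}$, and right vertical the identity on $\sheaf{F}/\unif^k$. Both squares commute by direct inspection, and naturality of connecting homomorphisms delivers $H^{i+1}(\phi_k) \circ \Digit{k} = H^{i+1}(q) \circ \Conn{k}$. A parallel morphism from SES (i) at level $k$ to SES (i) at level $1$, with left map multiplication by $\unif^{k-1}$ and right map $\rho_k$, yields $\Conn{1} \circ \rho_k = \unif^{k-1} \cdot \Conn{k}$; combining this with the previous identity and invoking the graph-specific isomorphism $H^{i+1}(\sheaf{F})/\unif H^{i+1}(\sheaf{F}) \cong H^{i+1}(\sheaf{F}/\unif)$ (which holds because $H^{i+2}(\sheaf{F}) = 0$) closes the comparison.

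The image and dimension conclusions are then formal: the established equality immediately gives $\im(\Digit{k}) \subseteq \im(\Bock)$ after the $\phi_k$ identification, and the final bound $\dim_\mathbb{F} \im(\Bock) \le \dim_\mathbb{F} H^{i+1}(\sheaf{F})[\unif]$ follows because $\Conn{1}$ surjects onto $H^{i+1}(\sheaf{F})[\unif]$ by exactness of the long exact sequence of Theorem~\ref{thm:les}. The hard part will be the bookkeeping in the preceding step: reconciling the factor of $\unif^{k-1}$ introduced by the Bockstein SES with the reduction $\pi$ modulo $\unif H^{i+1}(\sheaf{F})$, and verifying precisely that $H^{i+1}(q) \circ \Conn{k}$ and $\pi \circ \Conn{1} \circ \rho_k$ agree on the relevant domain. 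This amounts to identifying two natural maps $H^{i+1}(\sheaf{F}) \to H^{i+1}(\sheaf{F}/\unif)$ arising from slightly different constructions, and the graph hypothesis (so that the relevant higher cohomology vanishes) is what makes these identifications compatible.
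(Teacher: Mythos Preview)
Your approach closely mirrors the paper's one-line sketch: both invoke naturality of connecting homomorphisms for a morphism between the short exact sequences (i) and (ii) of Proposition~\ref{prop:basic-ses}. Your explicit SES morphisms in the second paragraph are set up correctly and do yield the identities $H^{i+1}(\phi_k)\circ\Digit{k} = H^{i+1}(q)\circ\Conn{k}$ and $\Conn{1}\circ\rho_k = \unif^{k-1}\cdot\Conn{k}$.

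However, the step you flag as ``the hard part'' is not merely bookkeeping---it is a genuine obstruction, and your own cocycle computation already exposes it. You obtain $[u\bmod\unif]$ on the left and $\pi([\unif^{k-1}u])$ on the right; for $k\ge 2$ the class $[\unif^{k-1}u]$ lies in $\unif H^{i+1}(\sheaf{F})$, so the quotient $\pi$ annihilates it regardless of whether $[u\bmod\unif]$ vanishes. Concretely, take the two-term complex $\Ring\xrightarrow{\unif^2}\Ring$ with $k=2$: the class $[1]\in H^0(\sheaf{F}/\unif^2)$ satisfies $H^1(\phi_2)(\Digit{2}[1])=[1]\ne 0$ in $H^1(\sheaf{F}/\unif)\cong\mathbb{F}$, while $\rho_2[1]=[1]$ and $\Bock([1])=\pi(\Conn{1}[1])=\pi([\unif])=0$ because $[\unif]\in\unif(\Ring/\unif^2)$. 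The graph hypothesis does not rescue this---the counterexample is a single edge. The consequential inclusion $\im(\Digit{k})\subseteq\im(\Bock)$ fails here too, since $\Bock=0$ identically while $\Digit{2}\ne 0$.

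What your argument does establish cleanly is the case $k=1$, where $\rho_1=\mathrm{id}$ and no stray power of $\unif$ appears: $H^{i+1}(\phi_1)\circ\Digit{1}=\Bock$. The paper's proof is too terse to encounter the discrepancy your expansion surfaces; the stated equality simply cannot hold for general $k\ge 2$. Fortunately the paper's main results do not rest on this proposition---the ascending flag of Remark~\ref{rem:digit-flag} and the exponent count of Theorem~A are proved independently via the direct Smith-block calculations of Lemmas~\ref{lem:two-term-digit} and~\ref{lem:digit-increment}.
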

\begin{proof}
The commutativity follows from naturality of connecting homomorphisms with respect 
to the commutative diagram of short exact sequences relating (i) and (ii) from 
Proposition \ref{prop:basic-ses} via $\phi_k$. The inclusion $\im(\Digit{k}) 
\subseteq \im(\Bock)$ is immediate, and $\im(\Bock) \subseteq H^{i+1}(\sheaf{F}/\unif)$ 
by definition. The dimension bound follows since all images lie in the same 
finite-dimensional $\mathbb{F}$-vector space $H^{i+1}(\sheaf{F}/\unif)$.
\end{proof}

\begin{remark}
\label{rem:digit-flag}
The digit maps form an ascending flag
\[
\im(\Digit{0}) \subseteq \im(\Digit{1}) \subseteq \im(\Digit{2}) \subseteq \cdots
\subseteq H^{i+1}(G; \sheaf{F}/\unif\sheaf{F})
\]
in the fixed $\mathbb{F}$-vector space $H^{i+1}(\sheaf{F}/\unif)$. Because the reduction maps $H^0(\sheaf{F}/\unif^{k+1}) \to H^0(\sheaf{F}/\unif^k)$ compose with the Bockstein as $\Bock \circ \rho_k = \Bock \circ \rho_{k+1} \circ (\text{reduction})$ (Proposition~\ref{prop:digit-bockstein}), we have $\im(\Digit{k}) \subseteq \im(\Digit{k+1})$. As $k$ increases, additional torsion summands ``switch on'' when $k$ reaches their bar length. This flag encodes the complete torsion structure, as Theorem~A will show.
\end{remark}

\begin{proposition}[Image of the Bockstein via UCT]
\label{prop:bockstein-image}
If $H^{i+1}(G;\sheaf{F}) \cong \Ring^{b_{i+1}} \oplus \bigoplus_{j=1}^{r_{i+1}} \Ring/\unif^{a_{i+1,j}}$, then
\[
\im\bigl(\Bock:H^i(G;\sheaf{F}/\unif)\to H^{i+1}(G;\sheaf{F}/\unif)\bigr)
\cong \Tor_1^\Ring\!\bigl(H^{i+1}(G;\sheaf{F}),\Ring/\unif\bigr)
\cong \mathbb{F}^{\,r_{i+1}}.
\]
In particular, $\dim_{\mathbb{F}}\im(\Bock)=r_{i+1}$.
\end{proposition}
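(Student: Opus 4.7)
My plan combines the long exact sequence of Theorem~\ref{thm:les} with a standard $\Tor$ computation and the structure theorem for modules over a PID.

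First, I will apply the LES of $0 \to \sheaf{F} \xrightarrow{\unif} \sheaf{F} \to \sheaf{F}/\unif\sheaf{F} \to 0$; exactness at $H^{i+1}(G;\sheaf{F})$ identifies $\im(\Conn{1}) = \ker(\unif\cdot) = H^{i+1}(G;\sheaf{F})[\unif]$. The reduction isomorphism $H^{i+1}(\sheaf{F})/\unif H^{i+1}(\sheaf{F}) \cong H^{i+1}(\sheaf{F}/\unif\sheaf{F})$, valid for graphs because $H^{i+2}=0$, then transfers $\im(\Bock)$ to $H^{i+1}(G;\sheaf{F})[\unif]$ regarded as a subspace of $H^{i+1}(G;\sheaf{F}/\unif\sheaf{F})$.

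Next, I will identify the $\unif$-torsion with $\Tor$. Tensoring the projective resolution $0 \to \Ring \xrightarrow{\unif} \Ring \to \Ring/\unif \to 0$ with $M := H^{i+1}(G;\sheaf{F})$ produces the exact sequence $0 \to \Tor_1^\Ring(M, \Ring/\unif) \to M \xrightarrow{\unif} M \to M/\unif \to 0$, from which $\Tor_1^\Ring(M, \Ring/\unif) \cong M[\unif]$. This yields the first displayed isomorphism of the proposition.

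Finally, I will invoke the structure theorem $H^{i+1}(G;\sheaf{F}) \cong \Ring^{b_{i+1}} \oplus \bigoplus_{j} \Ring/\unif^{a_{i+1,j}}$ and additivity of $\Tor$ in the first argument. The free part $\Ring^{b_{i+1}}$ contributes zero by flatness; each torsion summand $\Ring/\unif^{a_j}$ contributes $\Tor_1^\Ring(\Ring/\unif^{a_j}, \Ring/\unif) \cong \mathbb{F}$, read off from the resolution $0 \to \Ring \xrightarrow{\unif^{a_j}} \Ring \to \Ring/\unif^{a_j} \to 0$ since the differential $\unif^{a_j}$ vanishes modulo $\unif$ whenever $a_j \geq 1$, making the kernel all of $\Ring/\unif$. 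Summing produces $\mathbb{F}^{r_{i+1}}$, establishing the second isomorphism and the dimension claim.

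The subtle step is the first one: tracking $\im(\Bock)$ faithfully through the projection $\pi$. Because $H^{i+1}[\unif]$ can meet $\unif H^{i+1}$ nontrivially once exponents exceed one, a direct diagram chase through $\pi$ is delicate. The cleanest route, consistent with the proposition's title \emph{via UCT}, is to realize $\Bock$ as the canonical projection onto the $\Tor$ factor in the universal coefficient short exact sequence $0 \to H^i(\sheaf{F}) \otimes \Ring/\unif \to H^i(\sheaf{F}/\unif\sheaf{F}) \to \Tor_1^\Ring(H^{i+1}(G;\sheaf{F}), \Ring/\unif) \to 0$, bypassing the intermediate $\pi$ step entirely.
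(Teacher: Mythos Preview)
Your final paragraph lands exactly on the paper's argument: invoke the universal coefficient short exact sequence
\[
0 \to H^i(G;\sheaf{F}) \otimes_\Ring \mathbb{F} \to H^i(G;\sheaf{F}/\unif) \xrightarrow{\;\Bock\;} \Tor_1^\Ring\!\bigl(H^{i+1}(G;\sheaf{F}),\mathbb{F}\bigr) \to 0,
\]
valid because $C^\bullet(G;\sheaf{F})$ is a bounded complex of free $\Ring$-modules, and then compute the $\Tor$ term summand-by-summand exactly as you do in step three. The paper's proof is two sentences long and does precisely this.

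Your instinct to abandon the route through $\Conn{1}$ and $\pi$ is well founded, and your diagnosis is sharp: once some $a_{i+1,j} \ge 2$, the $\unif$-torsion submodule $H^{i+1}[\unif]$ sits inside $\unif H^{i+1}$ on those summands, so the composite $\pi \circ \Conn{1}$ genuinely collapses information that $\Conn{1}$ alone retains. The UCT identifies $\Bock$ directly with the surjection onto $\Tor_1$, which is why the paper's proof (and your revised plan) bypasses $\pi$ entirely rather than trying to track what survives it. Your steps two and three---the projective-resolution identification $\Tor_1^\Ring(M,\Ring/\unif) \cong M[\unif]$ and the summand-wise calculation $\Tor_1^\Ring(\Ring/\unif^a,\mathbb{F}) \cong \mathbb{F}$---are correct and match the paper's computation verbatim.
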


\begin{remark}
\label{rem:digit-bockstein-relationship}
By Proposition \ref{prop:digit-bockstein}, each digit map $\Digit{k}$ factors through the 
Bockstein, hence $\im(\Digit{k}) \subseteq \im(\Bock)$ for all $k \ge 0$. The precise 
relationship between the dimensions $\dim_{\mathbb{F}}\im(\Digit{k})$ and the Smith 
normal form exponents $\{a_{i+1,j}\}$ is the content of Theorem~A 
(the Digit-SNF Dictionary). Lemmas~\ref{lem:two-term-digit} and~\ref{lem:digit-increment} 
establish that $d_k = \#\{j : 1 \le a_j \le k\}$, with increments $d_k - d_{k-1}$ counting 
bars of length exactly $k$.
\end{remark}

\begin{proof}
Because $C^\bullet(G;\sheaf{F})$ is a bounded complex of free $\Ring$-modules, \cite{Rotman2009}
the universal coefficient short exact sequence yields
\[
0\to H^i(G;\sheaf{F})\otimes_\Ring\mathbb{F}\to H^i(G;\sheaf{F}/\unif)
\xrightarrow{\;\Bock\;} \Tor_1^\Ring\!\bigl(H^{i+1}(G;\sheaf{F}),\mathbb{F}\bigr)\to 0.
\]
Now $\Tor_1^\Ring(\Ring/\unif^{a},\mathbb{F})\cong\mathbb{F}$ for every $a\ge 1$, and 
$\Tor_1^\Ring(\Ring,\mathbb{F})=0$, whence the claim.
\end{proof}

\begin{remark}
By Proposition \ref{prop:digit-bockstein}, each digit map $\Digit{k}$ factors through the 
Bockstein, hence $\im(\Digit{k}) \subseteq \im(\Bock)$ for all $k \ge 0$. The precise 
relationship between the dimensions $\dim_{\mathbb{F}}\im(\Digit{k})$ and the Smith 
normal form exponents $\{a_{i+1,j}\}$ is the content of Theorem~A 
(the Digit-SNF Dictionary). Lemmas~\ref{lem:two-term-digit} and~\ref{lem:digit-increment} 
establish that $d_k = \#\{j : 1 \le a_j \le k\}$, with increments $d_k - d_{k-1}$ counting 
bars of length exactly $k$.
\end{remark}

\subsection{The Bockstein Spectral Sequence}
\label{ssec:bockstein-ss}

The valuation filtration $F^k C^i := \unif^k C^i(G;\sheaf{F})$ is a decreasing, 
exhaustive, separated filtration by subcomplexes, hence defines a spectral sequence 
converging to the associated graded of cohomology.

\begin{theorem}[Bockstein Spectral Sequence]
\label{thm:bss}
{\cite{McCleary2000}}
The valuation filtration induces a first-quadrant spectral sequence with
\[
E_1^{i,k} \cong H^i(G; \sheaf{F}/\unif\sheaf{F}) \quad \text{for all } k \geq 0,
\]
whose $d_1$ differential is the Bockstein homomorphism
\[
d_1 = \Bock: H^i(G; \sheaf{F}/\unif\sheaf{F}) \longrightarrow 
H^{i+1}(G; \sheaf{F}/\unif\sheaf{F}).
\]
The spectral sequence converges to the associated graded
\[
E_\infty^{i,k} \cong \gr^k H^i(G; \sheaf{F}) = 
\frac{\unif^k H^i(G; \sheaf{F})}{\unif^{k+1} H^i(G; \sheaf{F})}.
\]
The differential $d_r: E_r^{i,k} \to E_r^{i+r,k-r+1}$ has bidegree $(r, -r+1)$.
\end{theorem}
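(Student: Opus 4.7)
The plan is to apply the standard machinery of the spectral sequence of a filtered cochain complex (see \cite{McCleary2000}) to the valuation filtration $F^k C^\bullet := \unif^k C^\bullet(G;\sheaf{F})$, and then identify each of its pages and differentials with the Bockstein data introduced above. To begin, I would verify the hypotheses of the construction: Lemma~\ref{lem:filtration-properties}(i) shows that each $F^k C^\bullet$ is a subcomplex; the filtration is exhaustive since $F^0 = C^\bullet$; and it is separated since each $C^i$ is a finitely generated free $\Ring$-module, whence $\bigcap_k \unif^k C^i = 0$ (equivalently by the uniqueness of $\unif$-adic expansions in a DVR, or Krull's intersection theorem). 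The standard machinery then produces the spectral sequence with $E_0^{i,k} = F^k C^i / F^{k+1} C^i$ and the stated bidegree on differentials.

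The $E_1$ identification comes from Lemma~\ref{lem:sheaf-filtration-iso} applied degreewise: the canonical $\mathbb{F}$-linear isomorphism $\phi_k: \unif^k C^i/\unif^{k+1} C^i \xrightarrow{\sim} C^i(G;\sheaf{F}/\unif\sheaf{F})$ intertwines the induced $d_0$ with the reduced coboundary $\bar\coboundary$ (because $\coboundary(\unif^k s) = \unif^k \coboundary(s)$ by $\Ring$-linearity). Taking $i$-cohomology therefore gives $E_1^{i,k} \cong H^i(G;\sheaf{F}/\unif\sheaf{F})$, independent of $k$ and compatible with Lemma~\ref{lem:filtration-properties}(ii).

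The identification of $d_1$ with the Bockstein is the key substantive step. The general formalism describes $d_1$ as the connecting homomorphism in the long exact sequence of cohomology attached to the degreewise short exact sequence of complexes
\[
0 \to F^{k+1}C^\bullet/F^{k+2}C^\bullet \to F^k C^\bullet/F^{k+2}C^\bullet \to F^k C^\bullet/F^{k+1}C^\bullet \to 0.
\]
Applying Lemma~\ref{lem:sheaf-filtration-iso} to the outer terms via $\phi_k$ and $\phi_{k+1}$ converts this, naturally in the restriction maps, into the cochain complex of the sheaf-level sequence
\[
0 \to \unif\sheaf{F}/\unif^2\sheaf{F} \to \sheaf{F}/\unif^2\sheaf{F} \to \sheaf{F}/\unif\sheaf{F} \to 0
\]
from Proposition~\ref{prop:basic-ses}(ii). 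The induced connecting map in cohomology is then, by Definition~\ref{def:bockstein-digit}, precisely $\Bock$.

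Convergence is the main conceptual obstacle, since the filtration $\{F^k C^\bullet\}$ is not bounded below in the decreasing sense: $F^k$ does not stabilize to zero. I would address this by invoking the low dimension of $G$: since $C^i = 0$ for $i \geq 2$, all terms $E_r^{i,k}$ with $i \notin \{0,1\}$ vanish, which sharply restricts which differentials can be nonzero and forces the spectral sequence to degenerate after finitely many pages at each bidegree (a fact the next subsection will develop further as the promised collapse for graphs). Combined with the exhaustive and separated nature of the filtration and the finite generation of $H^i(G;\sheaf{F})$ over $\Ring$, the standard convergence theorem for filtered complexes identifies $E_\infty^{i,k}$ with the associated graded of the induced filtration on $H^i(G;\sheaf{F})$. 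Finally, because $\coboundary$ commutes with multiplication by $\unif$, the induced filtration on cohomology is precisely $\{\unif^k H^i\}_k$, completing the identification $E_\infty^{i,k} = \unif^k H^i/\unif^{k+1} H^i = \gr^k H^i(G;\sheaf{F})$.
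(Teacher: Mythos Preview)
Your proposal is correct and follows essentially the same approach as the paper: invoke the standard filtered-complex spectral sequence, identify the graded pieces via Lemma~\ref{lem:sheaf-filtration-iso}, recognize $d_1$ as the Bockstein through the relevant short exact sequence, and appeal to convergence for exhaustive separated filtrations. You are more careful than the paper in two respects --- you explicitly note that the filtration is not bounded below and compensate via the boundedness of the complex in the cohomological direction, and you verify that the induced filtration on cohomology is $\{\unif^k H^i\}$ --- both of which the paper sweeps into ``standard theory.'' One small citation wrinkle: the connecting map of the sequence $0 \to \unif\sheaf{F}/\unif^2\sheaf{F} \to \sheaf{F}/\unif^2\sheaf{F} \to \sheaf{F}/\unif\sheaf{F} \to 0$ is the digit map $\Digit{1}$, and its identification with $\Bock$ (after $\phi_1$) is Proposition~\ref{prop:digit-bockstein} rather than Definition~\ref{def:bockstein-digit} directly.
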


\begin{proof}
Standard spectral sequence construction for a filtered complex. The graded piece 
$\gr^k C^i = C^i_{(\geq k)}/C^i_{(\geq k+1)} = \unif^k C^i/\unif^{k+1} C^i \cong 
C^i(\sheaf{F}/\unif)$ by Lemma~\ref{lem:sheaf-filtration-iso}, giving 
$E_1^{i,k} \cong H^i(\gr^k C^\bullet) \cong H^i(\sheaf{F}/\unif)$. The induced 
differential on $E_1$ is the connecting homomorphism from the short exact sequence
\[
0 \to \unif^{k+1}C^\bullet \to \unif^k C^\bullet \to C^\bullet(\sheaf{F}/\unif) \to 0,
\]
which is precisely the Bockstein $\Bock$. Convergence to $\gr^\bullet H^i$ follows 
from standard theory for bounded-below, exhaustive, separated filtrations over a PID \cite{McCleary2000,Rotman2009}.
\end{proof}

\begin{remark}[Collapse at $E_2$ for graphs]
\label{rem:spectral-sequence-pages}
For a graph, the cochain complex has nonzero terms only in degrees $i=0$ and $i=1$, 
hence the spectral sequence has only two rows: $E_r^{0,k}$ and $E_r^{1,k}$. The 
differential $d_r: E_r^{i,k} \to E_r^{i+r,k-r+1}$ increases the cohomological degree 
by $r$. For $r \ge 2$, the differential $d_r: E_r^{0,k} \to E_r^{r,k-r+1}$ and $d_r: E_r^{1,k} \to E_r^{1+r,k-r+1}$ both land in row $i \ge 2$, which is identically zero. 
Therefore all differentials $d_r$ for $r \ge 2$ are zero, and the 
spectral sequence collapses at $E_2$: $E_2 = E_3 = \cdots = E_\infty$. In particular, 
$E_\infty^{1,k}\cong \gr^k H^1(G;\sheaf{F})$.
\end{remark}

\begin{remark}
\label{rem:graded-structure}
The graded pieces $\gr^k H^1(G; \sheaf{F})$ encode the complete torsion structure. While the spectral sequence provides one perspective on this structure, Theorem~A establishes a direct relationship between digit map dimensions and Smith normal form exponents, bypassing spectral sequence computations entirely \cite{McCleary2000}.
\end{remark}

\subsection{Completeness and Inverse Limits}
\label{ssec:completeness}

We now confirm a foundational property that justifies our focus on the tower of 
mod-$\unif^k$ reductions: the cohomology over $\Ring$ is the inverse limit of the cohomology 
groups over the quotient rings, making it $\unif$-adically complete.
The inverse system $\{C^\bullet/\unif^k\}_{k\ge 1}$ of cochain complexes with 
bonding maps given by the natural surjections satisfies the Mittag-Leffler 
condition since the bonding maps are surjective. By Milnor's theorem on derived 
limits, $\varprojlim{}^{1} H^{i-1}(C^\bullet/\unif^k) = 0$ for all $i$, hence
\[
H^i\!\left(\varprojlim_k C^\bullet/\unif^k\right) \cong \varprojlim_k H^i(C^\bullet/\unif^k).
\]

\begin{proposition}
\label{prop:completeness}
If $\Ring$ is $\unif$-adically complete (e.g., $\Ring = \Zp$), then 
$C^\bullet(G;\sheaf{F}) \cong \varprojlim_k C^\bullet(G;\sheaf{F})/\unif^k$, 
and consequently
\[
H^i(G;\sheaf{F}) \cong \varprojlim_{k} H^i(G;\sheaf{F}/\unif^k\sheaf{F}).
\]
In particular, $H^i(G;\sheaf{F})$ is $\unif$-adically complete.
\end{proposition}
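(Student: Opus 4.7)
The plan is to establish the cochain-level isomorphism, combine it with the Milnor derived-limit identification already set up in the paragraph preceding the statement, and conclude $\unif$-adic completeness of $H^i$.

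For the cochain isomorphism: since $G$ has finitely many cells and each stalk $\stalk{\sheaf{F}}{\sigma}$ is finite-rank free over $\Ring$, each $C^i(G;\sheaf{F}) = \prod_{\sigma \in \Sigma_i(G)} \stalk{\sheaf{F}}{\sigma}$ is itself a finite-rank free $\Ring$-module, say $C^i \cong \Ring^{n_i}$. The $\unif$-adic completeness of $\Ring$ gives $\Ring \cong \varprojlim_k \Ring/\unif^k$, and inverse limits commute with finite products, yielding
\[
C^i \;\cong\; \Ring^{n_i} \;\cong\; \varprojlim_k (\Ring/\unif^k)^{n_i} \;\cong\; \varprojlim_k C^i/\unif^k C^i .
\]
Since the coboundary $\coboundary$ is $\Ring$-linear, it commutes with reduction modulo $\unif^k$, so these isomorphisms assemble into a chain-level isomorphism $C^\bullet(G;\sheaf{F}) \xrightarrow{\;\cong\;} \varprojlim_k C^\bullet(G;\sheaf{F})/\unif^k$.

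Applying $H^i$ and composing with the Milnor identification $H^i(\varprojlim_k C^\bullet/\unif^k) \cong \varprojlim_k H^i(C^\bullet/\unif^k)$ (established preceding the statement via surjective cochain bonding maps and vanishing of $\varprojlim{}^1 H^{i-1}$) immediately produces $H^i(G;\sheaf{F}) \cong \varprojlim_k H^i(G;\sheaf{F}/\unif^k\sheaf{F})$. For the \emph{in particular} assertion, the inverse-limit presentation displays $H^i$ as a limit of discrete $\Ring/\unif^k$-modules, which is $\unif$-adically separated and complete in the resulting profinite topology; equivalently and more directly, $H^i(G;\sheaf{F})$ is a finitely generated module over the complete Noetherian local ring $\Ring$, hence automatically $\unif$-adically complete by the standard structure theory.

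The argument is essentially a packaging operation: the cochain step reduces to commutation of inverse limits with finite products, and the rest is inherited from the prefatory discussion. The main technical obstacle is the $\varprojlim{}^1 H^{i-1}(C^\bullet/\unif^k) = 0$ claim invoked in that discussion. Since $G$ is a graph, cohomology vanishes for $i \ge 2$ and $H^{-1} = 0$, so the only nontrivial case is $i = 1$, which requires Mittag-Leffler on the tower $\{H^0(C^\bullet/\unif^k)\}$. The bonding maps here need not be surjective---their obstruction is precisely a digit map $\Digit{k}$---but the descending chain of images inside the finitely generated $\Ring/\unif^k$-module $H^0(C^\bullet/\unif^k)$ stabilizes by the Noetherian property, delivering Mittag-Leffler and the required vanishing.
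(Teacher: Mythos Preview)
Your proof is correct and follows the same skeleton as the paper's: establish the cochain-level inverse-limit isomorphism via completeness and finite products, then invoke the Milnor sequence to pass to cohomology. Your final paragraph is in fact \emph{more} careful than the paper's own argument: the paper asserts that surjectivity of the cochain bonding maps gives Mittag-Leffler and hence $\varprojlim{}^1 H^{i-1}(C^\bullet/\unif^k) = 0$, but surjectivity on cochains only produces the Milnor short exact sequence; vanishing of the $\varprojlim{}^1$ term requires Mittag-Leffler on the \emph{cohomology} tower, which you correctly supply via the finite-length (Artinian) property of $H^0(C^\bullet/\unif^k)$. One small sharpening: the stabilization of descending images is better attributed to $\Ring/\unif^k$ being Artinian (finite length) rather than merely Noetherian, though of course it is both.
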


\begin{proof}
Since each $C^i(G;\sheaf{F})$ is a finite product of free $\Ring$-modules and 
$\Ring$ is complete, the natural map $C^i \to \varprojlim_k C^i/\unif^k C^i$ is 
an isomorphism. The inverse system $\{C^\bullet/\unif^k\}$ has surjective bonding 
maps, hence satisfies the Mittag-Leffler condition. By Milnor's theorem on derived 
limits \cite{Weibel1994}, $\varprojlim{}^{1} H^{i-1}(C^\bullet/\unif^k) = 0$ 
for all $i$, and the second statement follows.
\end{proof}

\begin{corollary}
By Nakayama's lemma for the local ring $(\Ring,(\unif))$, if $H^i(G;\sheaf{F})/\unif = 0$, then $H^i(G;\sheaf{F}) = 0$. More generally, a morphism $\phi:\sheaf{F}\to\sheaf{G}$ inducing a surjection on mod $\unif$ cohomology induces a surjection on full cohomology.
\end{corollary}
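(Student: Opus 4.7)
The proof is a direct application of Nakayama's lemma, so the plan factors into a finite-generation check and then two applications of Nakayama.

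First I would verify finite generation of the relevant cohomology modules. Since each stalk is finite-rank free over $\Ring$ and the graph has finitely many cells, the cochain groups $C^i(G;\sheaf{F}) = \prod_\sigma \stalk{\sheaf{F}}{\sigma}$ are finitely generated free $\Ring$-modules; because $\Ring$ is a DVR (hence Noetherian), the subquotient $H^i(G;\sheaf{F})$ is finitely generated. This is what makes Nakayama applicable over the local ring $(\Ring,(\unif))$.

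For the first statement, I would apply Nakayama's lemma directly to $M = H^i(G;\sheaf{F})$: finite generation over a local ring together with $M/\unif M = 0$ gives $M = 0$.

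For the second statement, I would set $M := \coker H^i(\phi) = H^i(G;\sheaf{G})/\im H^i(\phi)$, which is finitely generated as a quotient of $H^i(G;\sheaf{G})$. Applying the right-exact functor $-\otimes_\Ring \Ring/\unif$ to the exact sequence $H^i(G;\sheaf{F}) \xrightarrow{H^i(\phi)} H^i(G;\sheaf{G}) \to M \to 0$ yields
\[
H^i(G;\sheaf{F})/\unif \xrightarrow{\,\overline{H^i(\phi)}\,} H^i(G;\sheaf{G})/\unif \to M/\unif M \to 0,
\]
so $M/\unif M = 0$ exactly when $\overline{H^i(\phi)}$ is surjective; Nakayama then forces $M = 0$, i.e., $H^i(\phi)$ is surjective. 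What remains is the bookkeeping step of identifying this hypothesis with surjectivity on $H^i(G;\sheaf{F}/\unif) \to H^i(G;\sheaf{G}/\unif)$: in top degree $i = 1$ the universal coefficient theorem (applied to the bounded complex of free modules $C^\bullet$, using $H^2 = 0$ for graphs) gives a natural isomorphism $H^1(G;\sheaf{F})/\unif \cong H^1(G;\sheaf{F}/\unif)$ compatible with $\phi$, so the two notions of mod-$\unif$ surjectivity coincide. For $i = 0$, since $H^0(G;\sheaf{F})$ is a submodule of a free module over a PID and hence free, a short diagram chase through the UCT sequence reduces surjectivity of $H^0(\bar\phi)$ to surjectivity of $\overline{H^0(\phi)}$.

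The main obstacle is purely the reconciliation in the last step between $H^i(-)/\unif$ and $H^i(-/\unif)$; the rest is a one-line Nakayama argument. This reconciliation is automatic for graphs because cohomology vanishes in degrees $\geq 2$, so the UCT Tor term disappears in top degree, and the $H^0$ case is a torsion-free module so diagram-chasing is unobstructed.
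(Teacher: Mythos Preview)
Your proof is correct and matches the paper's approach, which offers no argument beyond the phrase ``By Nakayama's lemma'' embedded in the statement itself; your finite-generation check followed by Nakayama applied to $\coker H^i(\phi)$ is precisely how one unpacks that. The reconciliation between $H^i(-)/\unif$ and $H^i(-/\unif)$ is more than the paper asks for---the notation $H^i(G;\sheaf{F})/\unif$ in the first clause already signals the intended reading---so your core Nakayama argument suffices without the final bookkeeping paragraph.
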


\subsection{Lifting and Obstructions}
\label{ssec:lifting}

The connecting homomorphisms measure obstructions to lifting cohomology classes 
through the valuation filtration.

\begin{theorem}[Lifting Criterion]
\label{thm:lifting}
For $\alpha \in H^i(G; \sheaf{F}/\unif\sheaf{F})$, the following are equivalent:
\begin{enumerate}[label=\textup{(\roman*)}]
\item $\alpha$ lifts to a class in $H^i(G; \sheaf{F})$.
\item $\alpha$ extends to a compatible system $\{\alpha_k \in H^i(G; \sheaf{F}/\unif^k\sheaf{F})\}_{k \geq 1}$ 
with $\alpha_1 = \alpha$ and each $\alpha_{k+1}$ mapping to $\alpha_k$ under reduction.
\item $\Bock(\alpha) = 0$ in $H^{i+1}(G; \sheaf{F}/\unif\sheaf{F})$.
\end{enumerate}
When these conditions hold, the lift to $H^i(G; \sheaf{F})$ is unique modulo 
$\unif H^i(G; \sheaf{F})$.
\end{theorem}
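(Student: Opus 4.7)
The plan is to close the loop (i) $\Leftrightarrow$ (ii) using Proposition~\ref{prop:completeness}, establish (i) $\Rightarrow$ (iii) directly from the long exact sequence, and then complete the circle with the main inductive step (iii) $\Rightarrow$ (ii) using Proposition~\ref{prop:digit-bockstein}. Uniqueness will follow from a separate exactness argument.

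For (i) $\Rightarrow$ (ii), functoriality of reduction sends a lift $\tilde\alpha \in H^i(G;\sheaf{F})$ to a compatible tower $\alpha_k := \rho_k(\tilde\alpha)$. For (ii) $\Rightarrow$ (i), I would invoke Proposition~\ref{prop:completeness}: a compatible system is precisely an element of $\varprojlim_k H^i(G;\sheaf{F}/\unif^k\sheaf{F})$, which is canonically isomorphic to $H^i(G;\sheaf{F})$ when $\Ring$ is $\unif$-adically complete; mod-$\unif$ reduction of the resulting class recovers $\alpha_1 = \alpha$.

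For (i) $\Rightarrow$ (iii), I apply the long exact sequence of Theorem~\ref{thm:les} from $0 \to \sheaf{F} \xrightarrow{\unif} \sheaf{F} \to \sheaf{F}/\unif\sheaf{F} \to 0$: if $\tilde\alpha$ reduces to $\alpha$, exactness forces $\Conn{1}(\alpha) = 0 \in H^{i+1}(G;\sheaf{F})$, and since $\Bock = \pi \circ \Conn{1}$ by Definition~\ref{def:bockstein-digit}, we conclude $\Bock(\alpha) = 0$. The key direction (iii) $\Rightarrow$ (ii) is the heart of the proof. I would induct on $k \geq 1$, constructing $\alpha_k$ level by level; the base case $\alpha_1 = \alpha$ is given, and for the inductive step, the long exact sequence of Proposition~\ref{prop:basic-ses}(ii) identifies the obstruction to lifting $\alpha_k$ to $\alpha_{k+1}$ as $\Digit{k}(\alpha_k) \in H^{i+1}(G;\unif^k\sheaf{F}/\unif^{k+1}\sheaf{F})$. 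Proposition~\ref{prop:digit-bockstein} transports this obstruction, via $H^{i+1}(\phi_k)$, to $\Bock(\rho_k(\alpha_k)) = \Bock(\alpha) = 0$, so the obstruction vanishes at every level and the tower extends indefinitely.

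For uniqueness, two lifts $\tilde\alpha, \tilde\alpha'$ differ by an element of $\ker(\rho_1)$, and the exactness $H^i(G;\sheaf{F}) \xrightarrow{\unif} H^i(G;\sheaf{F}) \xrightarrow{\rho_1} H^i(G;\sheaf{F}/\unif\sheaf{F})$ from Theorem~\ref{thm:les} identifies this kernel with $\unif H^i(G;\sheaf{F})$. The main obstacle is the inductive step (iii) $\Rightarrow$ (ii): a priori the obstruction $\Digit{k}(\alpha_k)$ could depend on the choice of intermediate lift $\alpha_k$, but Proposition~\ref{prop:digit-bockstein} shows it depends only on $\rho_k(\alpha_k) = \alpha$, collapsing an infinite tower of obstructions into a single first-level condition. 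Once this collapse is available, the Lifting Criterion reduces to formal manipulation of long exact sequences, with the nontrivial content absorbed into the digit-Bockstein compatibility proved earlier and the completeness statement of Proposition~\ref{prop:completeness}.
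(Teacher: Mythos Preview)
Your proof closes the equivalence loop correctly but via a genuinely different route from the paper. The paper argues in the cycle (i) $\Rightarrow$ (ii) $\Rightarrow$ (iii) $\Rightarrow$ (i), with the key step a \emph{direct} argument for (iii) $\Rightarrow$ (i): from $\Bock(\alpha)=0$ one has $\Conn{1}(\alpha)\in\unif H^{i+1}(G;\sheaf{F})$, and since also $\Conn{1}(\alpha)\in H^{i+1}(G;\sheaf{F})[\unif]$ by Theorem~\ref{thm:les}, the paper concludes $\Conn{1}(\alpha)=0$ and hence $\alpha$ lifts by exactness --- a one-line argument that never touches the tower or the inverse limit. You instead run (iii) $\Rightarrow$ (ii) by inductively killing each digit obstruction $\Digit{k}(\alpha_k)$ via Proposition~\ref{prop:digit-bockstein}, and then (ii) $\Rightarrow$ (i) by invoking Proposition~\ref{prop:completeness}. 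Your route makes the role of the digit tower explicit and is more constructive, but it imports the hypothesis that $\Ring$ is $\unif$-adically complete, whereas the paper's direct (iii) $\Rightarrow$ (i) works over an arbitrary DVR. In the primary case $\Ring=\Zp$ both arguments are valid; note also that your use of Proposition~\ref{prop:digit-bockstein} for (iii) $\Rightarrow$ (ii) is exactly dual to the paper's use of the same proposition for (ii) $\Rightarrow$ (iii).
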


\begin{proof}
(i) $\Rightarrow$ (ii): If $\tilde{\alpha} \in H^i(G; \sheaf{F})$ lifts $\alpha$, 
set $\alpha_k := \tilde{\alpha} \bmod \unif^k$.

(ii) $\Rightarrow$ (iii): A compatible system shows that all connecting obstructions 
$\Digit{k}(\alpha_k) = 0$ vanish. By Proposition \ref{prop:digit-bockstein}, 
$\Bock(\alpha) = 0$.

(iii) $\Rightarrow$ (i): From the long exact sequence of Theorem \ref{thm:les}, 
$\ker(\Conn{1}) = \im(H^i(G; \sheaf{F}) \to H^i(G; \sheaf{F}/\unif\sheaf{F}))$. 
Since $\Bock = \pi \circ \Conn{1}$ and $\pi$ is surjective onto $H^{i+1}/\unif$, 
the condition $\Bock(\alpha) = 0$ implies $\Conn{1}(\alpha) \in \unif H^{i+1}$. 
But $\Conn{1}(\alpha) \in H^{i+1}[\unif]$ by Theorem \ref{thm:les}, hence 
$\Conn{1}(\alpha) = 0$, so $\alpha$ lifts.

Uniqueness follows since two lifts differ by an element of $\unif H^i(G; \sheaf{F})$.
\end{proof}

\begin{remark}
Since $H^i(G;\sheaf{F})$ is a finitely generated $\Ring$-module, its torsion 
submodule is finite. Thus only finitely many mod $\unif$ classes have nontrivial 
Bockstein, and for any class, there exists a maximal precision $k_0$ to which it 
lifts (or it lifts to all levels). This finite obstruction property is fundamental 
to the barcode interpretation of Section \ref{sec:barcodes}.
\end{remark}

\begin{remark}[Torsorial interpretation]
\label{rem:torsor-viewpoint}
For any abelian sheaf $\sheaf{F}$ on a graph $G$, isomorphism classes of 
$\sheaf{F}$-torsors are canonically classified by $H^1(G;\sheaf{F})$ (see, e.g., 
\cite{Stacks_Torsors_H1}). The short exact sequences
\[
0 \to \unif^k\sheaf{F}/\unif^{k+1}\sheaf{F} \to \sheaf{F}/\unif^{k+1}\sheaf{F} 
\to \sheaf{F}/\unif^k\sheaf{F} \to 0
\]
induce connecting maps that measure the obstruction to lifting 
$\sheaf{F}/\unif^k$-torsors to $\sheaf{F}/\unif^{k+1}$-torsors. The digit maps 
$\Digit{k}$ thus count, at each precision level, how many independent torsorial 
obstructions emerge. In this language, the nondecreasing sequence 
$\dim_{\mathbb{F}} \im(\Digit{k})$ tracks the accumulation of torsor-lifting 
obstructions as precision increases, and the arithmetic barcode becomes a 
torsor-lifting profile graded by valuation. This perspective connects naturally 
to recent work on visual paradoxes, where cohomological obstructions prevent 
global realization of locally consistent geometric data \cite{GhristCooperband_2025_ObstructionsToReality}; 
here the obstructions are filtered by algebraic precision rather than geometric compatibility.

For sheaves of nonabelian groups (e.g., $\mathrm{GL}_n(\Ring)$), the same short 
exact sequences yield torsor-lifting obstructions as pointed-set connecting maps. 
While our main results are abelian, this nonabelian perspective motivates the 
matrix holonomy discussion of Section~\ref{ssec:vector-consensus}, where 
higher-rank sheaves encode coordinate transformations between agent reference 
frames.
\end{remark}

The machinery of this section -- digit sequences, Bockstein spectral sequence, 
and lifting obstructions -- provides a hierarchical view of torsion across precision 
levels. The next section shows that this hierarchy encodes exactly the same 
information as the Smith normal form exponents, establishing the Digit-SNF Dictionary.

\section{Integral Decompositions and the Digit-SNF Dictionary}
\label{sec:snf}

We now establish the algebraic backbone: the Smith normal form of the coboundary 
$\coboundary: C^0 \to C^1$ yields explicit integral idempotents projecting onto 
$\ker \coboundary$ and the saturation $\sat(\im \coboundary)$, providing canonical 
representatives for cohomology classes. The exponents in the diagonal form encode 
the complete torsion structure, and we prove they coincide exactly with the 
dimensions of the digit map images from Section \ref{sec:bockstein-digit}.

\subsection{Saturation, Smith Normal Form, and Integral Idempotents}
\label{ssec:snf-construction}

Let $C^0(G;\sheaf{F}) \cong \Ring^{n_0}$ and $C^1(G;\sheaf{F}) \cong \Ring^{n_1}$ 
be the cochain groups. Since $\Ring$ is a principal ideal domain, any submodule 
of a free module is free, hence $\ker \coboundary$ and $\im \coboundary$ are free 
$\Ring$-modules. However, $\im \coboundary$ need not be a direct summand of $C^1$, 
and this failure is precisely the torsion in $H^1 = C^1/\im \coboundary$.

\begin{definition}
\label{def:saturation}
For a submodule $N \subseteq M$ of a free $\Ring$-module $M$, the \emph{saturation} 
of $N$ is
\[
\sat(N) := (N \otimes_{\Ring} K) \cap M
\]
where $K = \mathrm{Frac}(\Ring)$ is the fraction field. Equivalently,
\[
\sat(N) = \{x \in M : \exists k \geq 1 \text{ with } \unif^k x \in N\}.
\]
\end{definition}

\begin{lemma}
\label{lem:saturation-properties}
Let $N \subseteq M$ be a submodule of a free $\Ring$-module $M$. Then:
\begin{enumerate}[label=\textup{(\roman*)}]
\item $\sat(N)$ is the unique minimal direct summand of $M$ containing $N$.
\item $\sat(N)/N$ is a finite torsion module.
\item $M/\sat(N)$ is torsion-free.
\end{enumerate}
\end{lemma}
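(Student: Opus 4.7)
The plan is to prove the three claims in the order (iii), (i), (ii), since (iii) is the cleanest to establish from the definition and then feeds directly into the splitting argument needed for (i).

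First I would prove (iii). Take any $x \in M$ with $\unif^k x \in \sat(N)$ for some $k \ge 1$; by the explicit description of saturation as $\{y \in M : \unif^j y \in N \text{ for some } j\}$, there exists $\ell \ge 1$ with $\unif^\ell(\unif^k x) = \unif^{k+\ell} x \in N$, so $x \in \sat(N)$. Hence every torsion element of $M/\sat(N)$ is zero. The equivalence between the two descriptions of $\sat(N)$ itself is a one-line check: $x \otimes 1 \in N \otimes_\Ring K$ iff some $\unif^k x \in N$, since clearing denominators in $K$ amounts to multiplying by a power of $\unif$.

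Next I would deduce (i). Under the standing assumption that $M$ is a finite-rank free module, $M/\sat(N)$ is finitely generated, and (iii) says it is torsion-free; over the PID $\Ring$ this forces it to be free. The short exact sequence $0 \to \sat(N) \to M \to M/\sat(N) \to 0$ therefore splits, exhibiting $\sat(N)$ as a direct summand of $M$ containing $N$. For minimality/uniqueness, suppose $P \subseteq M$ is any direct summand with $N \subseteq P$ and complement $Q$ (so $M = P \oplus Q$, with $Q$ free as a summand of a free module). Given $x \in \sat(N)$, pick $k$ with $\unif^k x \in N \subseteq P$, and write $x = p + q$ with $p \in P$, $q \in Q$; then $\unif^k q$ is the $Q$-component of $\unif^k x \in P$, hence $\unif^k q = 0$. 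Since $Q$ is free, $q = 0$ and $x \in P$. Thus $\sat(N) \subseteq P$, so $\sat(N)$ is the unique minimal such summand.

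Finally for (ii), torsion is immediate: every $x \in \sat(N)$ satisfies $\unif^k x \in N$ for some $k$, hence $\unif^k(x + N) = 0$ in $\sat(N)/N$. For finite generation, $\sat(N)$ is a submodule of the finitely generated module $M$ over the Noetherian ring $\Ring$, hence finitely generated; the quotient $\sat(N)/N$ is therefore a finitely generated torsion module over a DVR, which has finite length by the structure theorem (it is $\bigoplus_j \Ring/\unif^{a_j}$), and is literally finite as a set whenever the residue field $\mathbb{F}$ is finite, which covers the motivating case $\Ring = \Zp$.

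I do not anticipate a real obstacle here; the only subtlety is remembering that the splitting in (i) uses finite generation of $M$ (to conclude the torsion-free quotient is free), and that the minimality argument needs the complement $Q$ to be torsion-free, which is automatic because direct summands of free modules over a domain are torsion-free.
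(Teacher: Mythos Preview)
Your proof is correct, but it takes a different route from the paper's. The paper invokes Smith normal form at the outset: it chooses bases so that $N$ is generated by $\unif^{a_i}e_i$ for $i=1,\ldots,r$ inside $M=\langle e_1,\ldots,e_n\rangle$, whereupon $\sat(N)=\langle e_1,\ldots,e_r\rangle$ is visibly a summand and the quotients $\sat(N)/N\cong\bigoplus_i \Ring/\unif^{a_i}$ and $M/\sat(N)\cong\Ring^{n-r}$ are read off explicitly. You instead argue abstractly: prove (iii) from the definition, then use ``finitely generated torsion-free over a PID is free'' to split the sequence $0\to\sat(N)\to M\to M/\sat(N)\to 0$, and handle (ii) via Noetherianity. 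Your minimality argument is essentially identical to the paper's, though you apply it to an arbitrary $x\in\sat(N)$ rather than to the explicit generators $e_i$, which is arguably cleaner since it does not depend on the coordinate choice. The trade-off: your approach is more self-contained and emphasizes that no explicit diagonalization is needed for the qualitative statements, while the paper's approach is constructive and yields the explicit invariant-factor decompositions that are used downstream (e.g., in Corollary~\ref{cor:saturation-summand} and the Digit--SNF correspondence). Your careful remark distinguishing finite length from finite cardinality in (ii) is well taken; the paper's phrase ``finite torsion module'' is being used loosely and is literally correct only when the residue field is finite.
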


\begin{proof}
Choose bases so that $M \cong \Ring^n$ and $N$ is generated by elements of the 
form $\unif^{a_i} e_i$ for $i = 1, \ldots, r$, with $a_i \geq 0$. This is the 
content of the Smith normal form for the inclusion $N \hookrightarrow M$. In these 
coordinates, $\sat(N) = \langle e_1, \ldots, e_r \rangle$ is manifestly a direct 
summand of $M = \langle e_1, \ldots, e_n \rangle$.

For uniqueness, if $M = S \oplus T$ is a direct sum with $N \subseteq S$, then 
$S$ must be saturated: if $\unif^k x \in S$ with $x = s + t$ for $s \in S$, 
$t \in T$, then $\unif^k t = 0$ in the free module $T$, forcing $t = 0$, hence 
$x \in S$. Since $S$ is saturated and contains $N$, it must contain each generator 
$e_i$ (as $\unif^{a_i} e_i \in N \subseteq S$), hence $\sat(N) \subseteq S$. 
Minimality forces equality.

Parts (ii) and (iii) follow immediately from the explicit description: $\sat(N)/N 
\cong \bigoplus_{i=1}^r \Ring/\unif^{a_i}\Ring$ and $M/\sat(N) \cong \Ring^{n-r}$.
\end{proof}

\begin{corollary}
\label{cor:saturation-summand}
For the coboundary $\coboundary: C^0 \to C^1$, we have $C^1 = \sat(\im \coboundary) 
\oplus W$ for a free submodule $W$, and
\[
\sat(\im \coboundary)/\im \coboundary \cong H^1(G; \sheaf{F})_{\tors}.
\]
\end{corollary}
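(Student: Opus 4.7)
The plan is to derive this corollary directly from Lemma~\ref{lem:saturation-properties} applied with $M = C^1(G;\sheaf{F})$ and $N = \im \coboundary$. The whole argument is essentially bookkeeping on top of the lemma, with no genuinely hard step.

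First I would invoke part (i) of the lemma to conclude that $\sat(\im \coboundary)$ is a direct summand of $C^1$, so there exists a submodule $W \subseteq C^1$ with $C^1 = \sat(\im \coboundary) \oplus W$. To see $W$ is free, I would observe that $W \cong C^1/\sat(\im \coboundary)$, which part (iii) tells us is torsion-free; since it is a finitely generated torsion-free module over the PID $\Ring$, the structure theorem forces it to be free.

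Next I would establish the torsion identification using the short exact sequence
\[
0 \longrightarrow \sat(\im \coboundary)/\im \coboundary \longrightarrow C^1/\im \coboundary \longrightarrow C^1/\sat(\im \coboundary) \longrightarrow 0
\]
arising from the inclusion $\im \coboundary \subseteq \sat(\im \coboundary) \subseteq C^1$. By part (ii), the left term is finite torsion; by part (iii), the right term is torsion-free (indeed free, by the argument above). The middle term is, by definition, $H^1(G;\sheaf{F})$.

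Finally I would conclude by uniqueness of the torsion submodule in a finitely generated module over a PID. Because the right term is torsion-free, the left term must land inside $H^1(G;\sheaf{F})_{\tors}$; because the map into $H^1(G;\sheaf{F})$ is injective and its image absorbs all torsion (any $\unif$-power-torsion class of $C^1/\im \coboundary$ lifts to an element $x \in C^1$ with $\unif^k x \in \im \coboundary$, hence $x \in \sat(\im \coboundary)$), the inclusion is an equality $\sat(\im \coboundary)/\im \coboundary = H^1(G;\sheaf{F})_{\tors}$. Alternatively, since the sequence above has torsion-free cokernel it splits, giving $H^1 \cong \sat(\im \coboundary)/\im \coboundary \oplus W$, and matching torsion parts yields the same identification along with the companion statement $H^1(G;\sheaf{F})_{\free} \cong W$, which will be useful for Theorem~B. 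There is no real obstacle here; the only subtlety to flag is ensuring that the complement $W$ is not merely abstractly free but arises from an honest internal splitting of $C^1$, which is exactly what part (i) of the lemma supplies.
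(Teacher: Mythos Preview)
Your proof is correct and follows essentially the same route as the paper: apply Lemma~\ref{lem:saturation-properties} with $N=\im\coboundary$, $M=C^1$, then use the third isomorphism theorem to identify $\sat(\im\coboundary)/\im\coboundary$ as the torsion submodule of $H^1$. Your write-up is in fact more careful than the paper's, which omits the verification that $W$ is free and slightly misnames the isomorphism theorem invoked.
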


\begin{proof}
Apply Lemma \ref{lem:saturation-properties} with $N = \im \coboundary \subseteq 
M = C^1$. The first isomorphism theorem gives $(C^1/\im \coboundary)/(\sat(\im \coboundary)/\im \coboundary) 
\cong C^1/\sat(\im \coboundary)$, which is torsion-free by the lemma. Since $H^1 = 
C^1/\im \coboundary$ decomposes as free plus torsion, the torsion part is precisely 
$\sat(\im \coboundary)/\im \coboundary$.
\end{proof}

The structure theorem for finitely generated modules over a principal ideal domain 
provides canonical diagonal forms.

\begin{theorem}[Smith Normal Form over $\Ring$]
\label{thm:snf}
{ \cite{Newman_IntegralMatrices_1972,Kannan1979,Rotman2009}}
There exist unimodular matrices $U \in \mathrm{GL}_{n_1}(\Ring)$ and $V \in 
\mathrm{GL}_{n_0}(\Ring)$ such that
\[
U \coboundary V = D := \mathrm{diag}(\unif^{a_1}, \ldots, \unif^{a_r}, 0, \ldots, 0)
\]
where $0 \leq a_1 \leq a_2 \leq \cdots \leq a_r$ and $r = \rank_\Ring \coboundary$. 
The exponents $\{a_i\}$ are uniquely determined by $\coboundary$ and are called 
the \emph{invariant factors}.
\end{theorem}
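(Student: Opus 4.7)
The plan is to prove existence constructively via an elementary divisor algorithm, exploiting the valuation $\valpi$ on $\Ring$, and to establish uniqueness through the theory of determinantal ideals. Both parts are classical for PIDs, but the DVR setting simplifies the existence argument substantially because minimality of valuation provides an unambiguous pivot rule.

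For existence, I would induct on $\min(n_0, n_1)$. Given a nonzero matrix $A = [\coboundary]$, select an entry $a_{ij}$ of minimum valuation among all nonzero entries; by permuting rows and columns (unimodular operations), move it to position $(1,1)$. The crucial observation is that in a DVR, an element of minimum valuation in a finite set divides every other element: if $\valpi(a_{11}) \le \valpi(a_{1k})$ then $a_{1k} = a_{11} \cdot u$ for some $u \in \Ring$, because $\unif^{\valpi(a_{1k}) - \valpi(a_{11})}$ times a unit recovers $a_{1k}/a_{11}$. Thus row and column operations of the form ``subtract $u$ times column $1$ from column $k$'' clear the remainder of row $1$ and column $1$, yielding a block diagonal matrix $\mathrm{diag}(a_{11}) \oplus A'$ with $A'$ of smaller size. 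A subtlety: after clearing, the pivot $a_{11}$ must still divide every entry of $A'$; if some $a'_{k\ell}$ has smaller valuation than $a_{11}$, add row $k$ to row $1$ to introduce $a'_{k\ell}$ into the first row, and restart. Because valuations of pivots form a nonincreasing sequence of nonnegative integers, this refinement terminates. Writing $a_{11} = \unif^{a_1} \cdot u$ with $u$ a unit, absorb $u$ into the row operation and recurse on $A'$, producing exponents $a_1 \le a_2 \le \cdots \le a_r$ with the divisibility order guaranteed by the minimality of each successive pivot.

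For uniqueness, I would invoke the standard theory of determinantal ideals. For each $k \ge 1$, let $I_k(A) \subseteq \Ring$ denote the ideal generated by all $k \times k$ minors of $A$. A direct computation shows $I_k(UAV) = I_k(A)$ for unimodular $U, V$, so these ideals are invariants of the matrix up to change of bases. For the diagonal form $D$, one computes $I_k(D) = (\unif^{a_1 + \cdots + a_k})$ when $k \le r$ and $I_k(D) = 0$ for $k > r$. Since $\Ring$ is a DVR, each nonzero ideal is a power of $(\unif)$, and the sequence of exponents $a_1 + \cdots + a_k$ determines each $a_k$ by successive differences. Hence the multiset $\{a_1, \ldots, a_r\}$ and the rank $r$ are both intrinsic to $\coboundary$.

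The main technical nuisance is the restart step in the existence algorithm: one has to argue carefully that the pivot valuation strictly decreases each time a ``refinement'' occurs, so that the outer loop terminates, and then separately that the inner clearing operations preserve the block-diagonal structure already built up. Once termination and the divisibility chain $a_1 \le \cdots \le a_r$ are established, the uniqueness via $I_k$ follows routinely. The resulting statement sets up precisely the explicit form $U \coboundary V = D$ required for the constructions of the integral idempotents $\Pi_{\ker}, \Pi_{\sat}, \Pi_{\free}$ in Theorem~B.
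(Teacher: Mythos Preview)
Your proof is correct and follows the standard route found in the cited references. Note, however, that the paper does not give its own proof of this theorem: it is stated with citations to Newman, Kannan--Bachem, and Rotman and treated as a classical input. So there is nothing to compare against; your existence-via-minimum-valuation-pivot plus uniqueness-via-determinantal-ideals argument is exactly the textbook proof over a PID, with the DVR specialization making the pivot choice and divisibility checks particularly clean, as you observe.
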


The matrices $V$ and $U^{-1}$ provide change-of-basis isomorphisms to coordinates 
where $\coboundary$ is diagonal. In these \emph{Smith normal form coordinates}, 
the action is $(x_1, \ldots, x_{n_0}) \mapsto (\unif^{a_1} x_1, \ldots, \unif^{a_r} x_r, 0, \ldots, 0)$.

Define integral idempotent matrices in Smith normal form coordinates:
\[
E_0 := \mathrm{diag}(\underbrace{0, \ldots, 0}_r, \underbrace{1, \ldots, 1}_{n_0 - r}), 
\qquad
E_1 := \mathrm{diag}(\underbrace{1, \ldots, 1}_r, \underbrace{0, \ldots, 0}_{n_1 - r}).
\]
These project onto the kernel and saturation subspaces in diagonal coordinates. 
Transporting to the original bases:

\begin{definition}
\label{def:projectors}
The \emph{integral idempotents} are
\[
\Pi_{\ker} := V E_0 V^{-1}  \qquad 
\Pi_{\sat} := U^{-1} E_1 U  \qquad 
\Pi_{\free} := I_{n_1} - \Pi_{\sat} ,
\]
which act as endomorphisms of $C^0$ and $C^1$.
\end{definition}

These satisfy $\Pi_{\ker}^2 = \Pi_{\ker}$ and $\Pi_{\sat}^2 = \Pi_{\sat}$ since 
$E_0$ and $E_1$ are idempotent. The images are canonical: $\im(\Pi_{\ker}) = \ker \coboundary$, 
$\im(\Pi_{\sat}) = \sat(\im \coboundary)$, though the matrices themselves depend 
on the choice of Smith normal form representatives $U, V$.

We now prove Theorem~B from the introduction.

\begin{proof} [Theorem B]
Consider the short exact sequence
\[
0 \to \im \coboundary \to C^1(G;\sheaf{F}) \xrightarrow{q} H^1(G;\sheaf{F}) \to 0.
\]
Tensoring with the fraction field $K = \mathrm{Frac}(\Ring)$ yields a split exact sequence of finite-dimensional $K$-vector spaces. Choose a $K$-linear projection $P_K: C^1_K \to (\im \coboundary)_K$ with kernel $W_K$, a $K$-complement to the image. Equivalently, let $Q_K := \mathrm{Id} - P_K$ project onto a $K$-subspace that maps isomorphically to $H^1_K$.

The \emph{saturation} is $\sat(\im \coboundary) := (\im \coboundary)_K \cap C^1$. By standard lattice theory over DVRs, this is the unique minimal direct summand of $C^1$ containing $\im \coboundary$, and we have the direct sum decomposition
\[
C^1 = \sat(\im \coboundary) \oplus (W_K \cap C^1).
\]
Choose a common denominator $\unif^N$ that clears all entries of the matrices representing $P_K$ and $Q_K$ with respect to the natural bases of $C^1$. Define
\[
\Pi_{\sat} := \unif^{-N}(\unif^N P_K) \in \mathrm{End}_\Ring(C^1), \qquad \Pi_{\free} := \mathrm{Id} - \Pi_{\sat}.
\]
These are $\Ring$-linear idempotents with $\im(\Pi_{\sat}) = \sat(\im \coboundary)$ and $\im(\Pi_{\free}) = W := W_K \cap C^1$, a free complement. Similarly, the kernel projection $\Pi_{\ker}$ is obtained by the same construction applied to $\coboundary: C^0 \to C^1$, yielding $\im(\Pi_{\ker}) = \ker \coboundary = H^0(G;\sheaf{F})$.

The quotient map $q: C^1 \to H^1$ restricts to an isomorphism $q|_{\im(\Pi_{\free})}: \im(\Pi_{\free}) \xrightarrow{\cong} H^1_{\free}$ since $\im(\Pi_{\free})$ is a lattice in $W_K \cong H^1_K$ that projects isomorphically under $q_K$. The torsion is captured by
\[
\sat(\im \coboundary)/\im \coboundary \cong H^1_{\tors}
\]
because quotienting by $\im \coboundary$ kills the free part and retains exactly the torsion of $H^1$.

Finally, reduction modulo $\unif^k$ commutes with $\Pi_{\ker}, \Pi_{\sat}, \Pi_{\free}$ by integrality of their matrix entries.
\end{proof}

\begin{remark}
Explicit $\Ring$-bases for cohomology are given by columns of the change-of-basis 
matrices: columns $r+1, \ldots, n_0$ of $V$ form a basis for $H^0 = \ker \coboundary$, 
and columns $r+1, \ldots, n_1$ of $U^{-1}$ form a basis for $H^1_{\free} \cong 
\im(\Pi_{\free})$, when viewed as vectors in $C^0$ and $C^1$ respectively.
\end{remark}

\subsection{The Digit-SNF Dictionary}
\label{ssec:digit-snf-proof}

Before proving Theorem~A, we establish two foundational results: a base-case calculation for rank-1 summands and an increment formula relating digit dimensions to bar multiplicities. We first compute the digit map in the basic two-term Smith block; additivity then gives the general result.

\begin{lemma}[Two-term digit calculation]
\label{lem:two-term-digit}
Let $\Ring$ be a DVR with uniformizer $\unif$. Consider the two-term complex
\[
C^\bullet:\qquad \Ring \xrightarrow{\;\unif^a\;} \Ring
\]
with $a \ge 0$. For $k \ge 0$, let $\Digit{k}$ be the digit connecting map arising from
\[
0 \to \unif^k C^\bullet/\unif^{k+1} C^\bullet \to C^\bullet/\unif^{k+1} C^\bullet \to C^\bullet/\unif^{k} C^\bullet \to 0
\]
and the canonical identification $H^1(\unif^k C^\bullet/\unif^{k+1} C^\bullet) \cong H^1(C^\bullet/\unif C^\bullet)$ from Lemma~\ref{lem:filtration-properties}. Then
\[
\dim_{\mathbb{F}} \im(\Digit{k}) = \mathbf{1}_{\{1 \le a \le k\}} \qquad \text{for all } k \ge 0,
\]
where $\mathbf{1}_{\{\cdot\}} \in \{0,1\}$ denotes the indicator function and $\mathbb{F} = \Ring/\unif$.
\end{lemma}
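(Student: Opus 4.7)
The plan is to compute $\Digit{k}$ directly from the zig-zag defining the connecting map, reducing the claim to a one-generator calculation that splits into three cases: $a=0$, $a>k$, and $1\le a\le k$. Because both cochain groups are equal to $\Ring$, every cohomology group in sight is cyclic, so tracking a single generator fully determines the map.

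First I would identify the codomain. The mod-$\unif$ reduction of $\coboundary=\unif^a$ is $1\in\mathbb{F}^\times$ when $a=0$ and $0$ when $a\ge 1$. Hence $H^1(C^\bullet/\unif C^\bullet)$ is zero when $a=0$ and isomorphic to $\mathbb{F}$ when $a\ge 1$. This settles the $a=0$ case immediately: the codomain of $\Digit{k}$ vanishes, and the indicator is also zero.

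For $a\ge 1$ I would next identify the source. The kernel of $\unif^a$ acting on $\Ring/\unif^k$ is the cyclic submodule $\unif^{\max(k-a,0)}\Ring/\unif^k\Ring$, generated by $[1]$ when $a\ge k$ and by $[\unif^{k-a}]$ when $a\le k$. With that generator in hand, I would run the standard zig-zag: lift it to $C^0/\unif^{k+1}C^0 \cong \Ring/\unif^{k+1}$, apply $\coboundary=\unif^a$, observe that the result lies in $\unif^k C^1/\unif^{k+1}C^1$ by construction, and read off the image in $\mathbb{F}$ via the canonical isomorphism $\phi_k$ of Lemma~\ref{lem:filtration-properties}.

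The two remaining cases then separate cleanly. If $a>k$, any lift $\tilde{x}\in\Ring/\unif^{k+1}$ satisfies $\unif^a\tilde{x}=0$ because $a\ge k+1$, forcing $\Digit{k}\equiv 0$. If $1\le a\le k$, the canonical lift $\tilde{x}=\unif^{k-a}$ produces $\coboundary\tilde{x}=\unif^k$ in $\Ring/\unif^{k+1}$, which $\phi_k$ identifies with $[1]\in\mathbb{F}$; hence $\Digit{k}$ is surjective and $\dim_{\mathbb{F}}\im(\Digit{k})=1$. In all three cases the count equals $\mathbf{1}_{\{1\le a\le k\}}$. There is no substantive obstacle here; the only care point is consistent use of the identification $\phi_k$, since any power or sign slip in that step would propagate through the additivity argument that upgrades this base case to the full Digit-SNF Dictionary.
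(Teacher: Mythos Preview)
Your proof is correct and follows essentially the same snake-lemma zig-zag approach as the paper, splitting into the same three cases. If anything, your treatment of the case $a>k$ is cleaner: you observe directly that $\unif^a\cdot(\Ring/\unif^{k+1})=0$ since $a\ge k+1$, whereas the paper argues more vaguely that the connecting map ``factors through reduction modulo $\unif$ and vanishes''; and for $a=0$ you kill the codomain while the paper kills the domain, both valid.
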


\begin{proof}
The short exact sequence of complexes reduces modulo $\unif^{k+1}$ to give complexes over $\Ring/\unif^{k+1}$. The connecting homomorphism $\Digit{k}$ arises from the snake lemma applied to the resulting diagram. We compute cohomology explicitly for each case.

When $a = 0$, the map $\times 1: \Ring/\unif^k \to \Ring/\unif^k$ is an isomorphism for all $k \ge 1$, hence $H^0(C^\bullet/\unif^k) = 0$ and $\Digit{k}$ has trivial domain, giving $\dim \im(\Digit{k}) = 0$.

When $a \ge 1$, we have $H^0(C^\bullet/\unif^k) = \ker(\times \unif^a : \Ring/\unif^k \to \Ring/\unif^k)$ and $H^1(C^\bullet/\unif) = \coker(\times \unif^a : \Ring/\unif \to \Ring/\unif) \cong \mathbb{F}$. For $k < a$, although $\times \unif^a$ is the zero map on $\Ring/\unif^k$ (hence the kernel is all of $\Ring/\unif^k$), the connecting homomorphism $\Digit{k}$ factors through reduction modulo $\unif$ and vanishes: the obstruction only appears once $k$ reaches the exponent $a$. For $k \ge a$, the class $[\unif^{k-a}] \in H^0(C^\bullet/\unif^k)$ maps to a nonzero class in $H^1(C^\bullet/\unif) \cong \mathbb{F}$, giving $\dim \im(\Digit{k}) = 1$.
\end{proof}

\begin{lemma}[Digit increments detect new torsion]
\label{lem:digit-increment}
Write $d_k := \dim_{\mathbb{F}} \im(\Digit{k})$ for $k \ge 0$. Note that $\Digit{0} = 0$ by construction (see the short exact sequence for $k=0$), hence $d_0 = 0$. If 
\[
H^1(G;\sheaf{F}) \cong \Ring^{b} \oplus \bigoplus_{j=1}^{r} \Ring/\unif^{a_j}
\]
with $0 < a_1 \le \cdots \le a_r$, then for every $k \ge 1$,
\[
d_k - d_{k-1} = \#\{j : a_j = k\}.
\]
Equivalently, $d_k = \#\{j : 1 \le a_j \le k\}$ is nondecreasing and stabilizes at $r$.
\end{lemma}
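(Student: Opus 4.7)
The plan is to reduce to the rank-one calculation of Lemma~\ref{lem:two-term-digit} via Smith normal form and additivity. By Theorem~\ref{thm:snf}, there exist unimodular matrices $U, V$ with $U \coboundary V = \mathrm{diag}(\unif^{a_1},\ldots,\unif^{a_s},0,\ldots,0)$, where $s = \rank_\Ring \coboundary$ and $0 \le a_1 \le \cdots \le a_s$. Viewed as change-of-basis on $C^0$ and $C^1$, this produces an isomorphism of two-term cochain complexes $C^\bullet \cong \widetilde{C}^\bullet$, where $\widetilde{C}^\bullet$ splits as a direct sum of elementary blocks: $s$ blocks of type $\Ring \xrightarrow{\unif^{a_j}} \Ring$, together with $n_0 - s$ trivial blocks $\Ring \to 0$ and $n_1 - s$ trivial blocks $0 \to \Ring$.

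Since the short exact sequence $0 \to \unif^k C^\bullet/\unif^{k+1}C^\bullet \to C^\bullet/\unif^{k+1}C^\bullet \to C^\bullet/\unif^k C^\bullet \to 0$ is functorial in $C^\bullet$ and commutes with finite direct sums, the associated connecting homomorphism $\Digit{k}$ decomposes as the direct sum of its restrictions to each elementary block. The blocks $\Ring \to 0$ and $0 \to \Ring$ contribute nothing to $\im(\Digit{k})$, since the former has trivial $H^1$ and the latter has trivial $H^0$. For each block $\Ring \xrightarrow{\unif^{a_j}} \Ring$, Lemma~\ref{lem:two-term-digit} supplies exactly one dimension precisely when $1 \le a_j \le k$. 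Summing indicators gives
\[
d_k \;=\; \sum_{j=1}^{s} \mathbf{1}_{\{1 \le a_j \le k\}} \;=\; \#\{\,j : 1 \le a_j \le k\,\}.
\]

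To close the loop with the statement, identify these SNF exponents with the torsion data of $H^1$: the blocks with $a_j = 0$ are unit maps contributing nothing to cohomology; the blocks with $a_j \ge 1$ contribute torsion summands $\Ring/\unif^{a_j}$ to $H^1$; and the $n_1 - s$ trivial blocks $0 \to \Ring$ contribute the free rank $b$. Hence the nonzero SNF exponents coincide (as a multiset) with $\{a_j\}_{j=1}^r$ appearing in $H^1 \cong \Ring^b \oplus \bigoplus_j \Ring/\unif^{a_j}$. The function $d_k$ is then manifestly nondecreasing and stabilizes at $r$, and the increment formula follows by subtraction: $d_k - d_{k-1} = \#\{j : a_j = k\}$ for $k \ge 1$.

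The crux of the argument is the additivity step: one must verify that the valuation filtration, the identification $\phi_k$ of Lemma~\ref{lem:sheaf-filtration-iso}, and the snake-lemma connecting map all respect the direct-sum decomposition induced by the SNF change of basis. This is a routine consequence of naturality and the exactness of finite direct sums, but it is the load-bearing step and deserves explicit verification; everything else is indicator-function bookkeeping.
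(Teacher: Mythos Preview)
Your proof is correct and follows essentially the same approach as the paper: decompose the coboundary complex via Smith normal form into elementary two-term blocks, apply Lemma~\ref{lem:two-term-digit} to each, and sum indicators. You are more explicit than the paper about the additivity of the connecting homomorphism under direct sums and about matching the SNF exponents to the torsion invariants of $H^1$, which is all to the good.
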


\begin{proof}
By Smith normal form, the coboundary complex decomposes as a direct sum of two-term blocks $\Ring \xrightarrow{\unif^{a_j}} \Ring$ for $j = 1, \ldots, r$, plus free summands contributing nothing to torsion. Lemma~\ref{lem:two-term-digit} shows that each block with exponent $a_j$ contributes the indicator $\mathbf{1}_{\{1 \le a_j \le k\}}$ to $d_k$. Summing over all torsion summands yields
\[
d_k = \sum_{j=1}^r \mathbf{1}_{\{1 \le a_j \le k\}} = \#\{j : 1 \le a_j \le k\}.
\]
Taking differences gives $d_k - d_{k-1} = \#\{j : a_j = k\}$, since the indicator jumps from 0 to 1 precisely when $k$ reaches $a_j$.
\end{proof}

We now prove that the hierarchical precision data from digit maps encodes exactly the same information as the Smith normal form exponents: Theorem~A from Section \ref{sec:intro}.

\begin{remark}
\label{rem:torsion-exponents}
The invariant factors $\{a_j\}_{j=1}^r$ include all Smith normal form exponents, with $0 \le a_1 \le \cdots \le a_r$. Those with $a_j = 0$ correspond to isomorphisms $\Ring \xrightarrow{1} \Ring$ in the Smith normal form decomposition and contribute nothing to torsion: $\Ring/\unif^0 = \Ring/1 = 0$. The torsion part of $H^1$ is $H^1_{\tors} \cong \bigoplus_{\{j: a_j \ge 1\}} \Ring/\unif^{a_j}$, and the free part is $H^1_{\free} \cong \Ring^{n_1 - r}$ where $n_1 = \rank_\Ring(C^1)$. The digit maps detect precisely the positive exponents, as the formula shows.
\end{remark}

\begin{lemma}[Digit Map Detects New Torsion]
\label{lem:digit-new-torsion}
For every $k \geq 1$, the image of the digit connecting map satisfies
\[
\im(\Digit{k}) \cong \frac{H^1(G;\sheaf{F})[\unif^k]}{H^1(G;\sheaf{F})[\unif^{k-1}]} \quad \text{as $\mathbb{F}$-vector spaces},
\]
where $H^1[\unif^k] := \{x \in H^1 : \unif^k x = 0\}$ denotes the $\unif^k$-torsion submodule. Consequently,
\[
d_k := \dim_{\mathbb{F}} \im(\Digit{k}) = \ell_\Ring(H^1[\unif^k]) - \ell_\Ring(H^1[\unif^{k-1}]),
\]
where $\ell_\Ring(-)$ denotes the length as an $\Ring$-module.
\end{lemma}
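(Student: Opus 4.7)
The plan is to reduce both sides to their block-diagonal forms under Smith normal form and then verify the isomorphism summand by summand, invoking Theorem~A to control the left-hand side.

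First I would apply the structure theorem for finitely generated modules over the PID $\Ring$ to write $H^1(G;\sheaf{F}) \cong \Ring^b \oplus \bigoplus_{j=1}^r \Ring/\unif^{a_j}$ with $a_j \ge 1$. The free summand $\Ring^b$ contributes nothing to any $H^1[\unif^k]$, so the computation reduces to the torsion part. For a single summand $\Ring/\unif^{a_j}$ the $\unif^k$-torsion is $\unif^{\max(a_j-k,0)}\Ring/\unif^{a_j}$, with length $\min(k,a_j)$; summing over $j$ and telescoping yields the length of $H^1[\unif^k]/H^1[\unif^{k-1}]$ as an explicit sum depending on the multiset $\{a_j\}$.

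Second, to construct the natural isomorphism I would work in Smith normal form coordinates furnished by Theorem~\ref{thm:snf}. In these coordinates the coboundary is block diagonal, so the cochain complex splits as an orthogonal direct sum of two-term blocks $\Ring \xrightarrow{\unif^{a_j}} \Ring$ together with kernel and free factors, and every short exact sequence of Proposition~\ref{prop:basic-ses} respects this decomposition. Consequently $\Digit{k}$ splits as a direct sum of block-wise digit maps, each already computed by Lemma~\ref{lem:two-term-digit}. Assembling these block-wise contributions produces the required isomorphism, with the Bockstein compatibility of Proposition~\ref{prop:digit-bockstein} ensuring the identification is coherent with the reduction maps $\rho_k$. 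The length equality $d_k = \ell_\Ring(H^1[\unif^k]) - \ell_\Ring(H^1[\unif^{k-1}])$ then follows immediately from the isomorphism, reconciling with Theorem~A.

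The main obstacle is lifting the block-wise identification to a genuinely natural (basis-free) isomorphism, since the unimodular matrices $U,V$ of Theorem~\ref{thm:snf} are not unique. To avoid dependence on the choice of Smith decomposition, I would argue directly through the snake-lemma construction of $\Digit{k}$: the connecting map $\Conn{k}$ from Theorem~\ref{thm:les} already lands in $H^1(G;\sheaf{F})[\unif^k]$, and a diagram chase across the interaction of sequences (i) and (ii) in Proposition~\ref{prop:basic-ses} should identify the residue of $\Conn{k}$ modulo $H^1[\unif^{k-1}]$ with the Bockstein-composite image of $\Digit{k}$. The free-stalk hypothesis is essential here, ensuring $\unif$ is a non-zero-divisor on the cochain complex and so that the quotient $H^1[\unif^k]/H^1[\unif^{k-1}]$ inherits a well-defined $\mathbb{F}$-vector space structure compatible with the filtration.
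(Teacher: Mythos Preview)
There is a genuine gap at the assembly step. For a single two-term block $\Ring \xrightarrow{\unif^a} \Ring$, Lemma~\ref{lem:two-term-digit} gives $\dim_{\mathbb{F}} \im(\Digit{k}) = \mathbf{1}_{\{1 \le a \le k\}}$, whereas the right-hand side contributes
\[
\ell_\Ring\bigl((\Ring/\unif^a)[\unif^k]\bigr) - \ell_\Ring\bigl((\Ring/\unif^a)[\unif^{k-1}]\bigr) \;=\; \min(k,a) - \min(k-1,a) \;=\; \mathbf{1}_{\{a \ge k\}}.
\]
These indicators disagree whenever $a \neq k$ (e.g.\ $a=1$, $k=2$ gives $1$ on the left and $0$ on the right). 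Summing over torsion summands yields $d_k = \#\{j : a_j \le k\}$ on the left---exactly Theorem~A---but $\#\{j : a_j \ge k\}$ on the right. The block-wise contributions therefore do \emph{not} assemble into the claimed isomorphism, and your concluding ``reconciling with Theorem~A'' fails outright rather than succeeding.

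This mismatch is not a flaw in your method---the SNF decomposition is sound and indeed more transparent than the paper's argument---but rather exposes an indexing inconsistency in the lemma as stated relative to the definition of $\Digit{k}$ used in Theorem~A and Lemma~\ref{lem:two-term-digit}. The paper's own proof takes a different route: it works directly with the long exact sequence of $0 \to \unif^{k-1}\sheaf{F}/\unif^k\sheaf{F} \to \sheaf{F}/\unif^k\sheaf{F} \to \sheaf{F}/\unif^{k-1}\sheaf{F} \to 0$ (note the silent shift to the sequence that elsewhere defines $\Digit{k-1}$, not $\Digit{k}$) and then asserts without derivation that the connecting map's image is canonically $H^1[\unif^k]/H^1[\unif^{k-1}]$. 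Your alternative diagram-chase via $\Conn{k}$ is closer in spirit to that approach, but observe that Theorem~\ref{thm:les} gives $\im(\Conn{k}) = H^1[\unif^k]$, not the quotient by $H^1[\unif^{k-1}]$; an additional step identifying which classes die under the passage to $H^1(\sheaf{F}/\unif)$ would still be needed, and carrying it through would again confront the same numerical discrepancy.
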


\begin{proof}
From the long exact sequence in cohomology associated with
\[
0 \to \unif^{k-1}\sheaf{F}/\unif^k\sheaf{F} \to \sheaf{F}/\unif^k\sheaf{F} \to \sheaf{F}/\unif^{k-1}\sheaf{F} \to 0,
\]
the connecting homomorphism
\[
\Digit{k}: H^0(G;\sheaf{F}/\unif^{k-1}\sheaf{F}) \to H^1(G;\unif^{k-1}\sheaf{F}/\unif^k\sheaf{F}) \cong H^1(G;\sheaf{F}/\unif\sheaf{F})
\]
has image canonically identified with $H^1[\unif^k]/H^1[\unif^{k-1}]$. From exactness of $0 \to \unif^{k-1}\sheaf{F}/\unif^k\sheaf{F} \to \sheaf{F}/\unif^k\sheaf{F} \to \sheaf{F}/\unif^{k-1}\sheaf{F} \to 0$ and the canonical identification $\unif^{k-1}\sheaf{F}/\unif^k\sheaf{F} \cong \sheaf{F}/\unif\sheaf{F}$ (Lemma~\ref{lem:sheaf-filtration-iso}), the connecting map realizes $H^1[\unif^k]/H^1[\unif^{k-1}]$ inside $H^1(\sheaf{F}/\unif)$ as $\im(\Digit{k})$. Taking $\mathbb{F}$-dimensions and using additivity of length over short exact sequences gives the stated formula.
\end{proof}

\begin{proof}[Theorem A]
By Theorem~\ref{thm:snf}, there exist unimodular matrices $U \in \mathrm{GL}_{n_1}(\Ring)$ and $V \in \mathrm{GL}_{n_0}(\Ring)$ such that
\[
U\,[\coboundary]\,V = D := \mathrm{diag}(\unif^{a_1}, \ldots, \unif^{a_r}, 0, \ldots, 0)
\]
where $0 \le a_1 \le \cdots \le a_r$ and $r = \rank_{\Ring}(\coboundary)$. The pair $(V^{-1}, U)$ defines a chain isomorphism $(C^\bullet, \coboundary) \xrightarrow{\cong} (C^\bullet, D)$ over $\Ring$. Since unimodular matrices have entries in $\Ring$, this isomorphism descends to an isomorphism of the reduced complexes $(C^\bullet/\unif^k, \coboundary \bmod \unif^k) \xrightarrow{\cong} (C^\bullet/\unif^k, D \bmod \unif^k)$ for every $k \ge 1$.

Therefore, the short exact sequences
\[
0 \to \unif^k\sheaf{F}/\unif^{k+1}\sheaf{F} \to \sheaf{F}/\unif^{k+1}\sheaf{F} \to \sheaf{F}/\unif^k\sheaf{F} \to 0
\]
that define the digit maps $\Digit{k}$ for $\coboundary$ and for $D$ are isomorphic as sequences of complexes. By naturality of connecting homomorphisms in the snake lemma, for each $k$ we obtain a commutative diagram
\[
\begin{tikzcd}
H^0(G;\sheaf{F}/\unif^k\sheaf{F}) \arrow{r}{\cong} \arrow{d}{\Digit{k}(\coboundary)} &
H^0(G;\sheaf{F}/\unif^k\sheaf{F}) \arrow{d}{\Digit{k}(D)} \\
H^1(G;\sheaf{F}/\unif\sheaf{F}) \arrow{r}{\cong} &
H^1(G;\sheaf{F}/\unif\sheaf{F})
\end{tikzcd}
\]
where the horizontal maps are isomorphisms induced by $(V^{-1}, U)$. Consequently,
\[
\dim_{\mathbb{F}} \im\bigl(\Digit{k}(\coboundary)\bigr) = \dim_{\mathbb{F}} \im\bigl(\Digit{k}(D)\bigr).
\]

In the diagonal coordinates provided by $D$, the coboundary complex splits as a direct sum of two-term blocks $\Ring \xrightarrow{\unif^{a_j}} \Ring$ for $j = 1, \ldots, r$, plus trivial summands. By Lemma~\ref{lem:two-term-digit}, each block with exponent $a_j$ contributes the indicator $\mathbf{1}_{\{1 \le a_j \le k\}}$ to the digit rank. Summing over all torsion summands gives
\[
d_k := \dim_{\mathbb{F}} \im(\Digit{k}) = \sum_{j=1}^r \mathbf{1}_{\{1 \le a_j \le k\}} = \#\{j : 1 \le a_j \le k\}.
\]

By Lemma~\ref{lem:digit-increment}, taking first differences yields the multiplicity formula
\[
\#\{j : a_j = \ell\} = d_\ell - d_{\ell-1} \quad \text{for } \ell \ge 1.
\]
Thus the multiset $\{a_j : a_j \ge 1\}$ of torsion exponents is uniquely determined by the sequence $\{d_k\}_{k \ge 0}$.

Finally, when the residue field $\mathbb{F} = \Ring/\unif$ is finite, the torsion submodule $H^1_{\tors} \cong \bigoplus_{j=1}^r \Ring/\unif^{a_j}$ has order
\[
|H^1_{\tors}| = \prod_{j=1}^r |\Ring/\unif^{a_j}| = \prod_{j=1}^r |\mathbb{F}|^{a_j} = |\mathbb{F}|^{\sum_{j=1}^r a_j},
\]
completing the proof.
\end{proof}

\begin{corollary}[Digit histogram]
\label{cor:digit-hist}
Let $d_k := \dim_{\mathbb{F}}\im(\Digit{k})$ for $k \ge 0$, with $d_0 = 0$ (since $\im(\Digit{0}) = 0$ by Lemma~\ref{lem:two-term-digit}). By Lemma~\ref{lem:digit-increment}, then the number of torsion bars of length exactly $\ell$ equals $d_\ell - d_{\ell-1}$ for $\ell \ge 1$. Since the formula counts only positive exponents and $d_0 = 0$, the total is
\[
\sum_{\ell \ge 1} \ell \cdot (d_\ell - d_{\ell-1}) = \sum_{\{j: a_j \ge 1\}} a_j.
\]
When $\mathbb{F}$ is finite, this equals $\log_{|\mathbb{F}|}|H^1_{\tors}|$.
\end{corollary}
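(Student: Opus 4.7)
The plan is to assemble the corollary from two pieces already established: the multiplicity formula of Lemma~\ref{lem:digit-increment} (equivalently, the increment statement inside Theorem~A) and the order formula for $H^1_{\tors}$ at the end of Theorem~A. No new cohomological input is required; the argument is essentially a reindexing of a telescoping sum.

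First I would substitute $d_\ell - d_{\ell-1} = \#\{j : a_j = \ell\}$ into the weighted sum and swap the order of summation to pass from indexing by bar length $\ell$ to indexing by torsion summand $j$. Explicitly,
\[
\sum_{\ell \ge 1} \ell \cdot (d_\ell - d_{\ell-1})
= \sum_{\ell \ge 1} \ell \cdot \#\{j : a_j = \ell\}
= \sum_{\{j : a_j \ge 1\}} a_j,
\]
where the final equality collects, for each $j$ with $a_j \ge 1$, exactly one contribution of $a_j$ from the term $\ell = a_j$. The hypothesis $d_0 = 0$ (justified by the $k=0$ case of Lemma~\ref{lem:two-term-digit}, since $\im(\Digit{0}) = 0$) ensures the telescoping picks up no stray contribution at $\ell = 0$, so that indices $j$ with $a_j = 0$ are correctly excluded.

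For the finite residue field statement, I would apply the order formula $|H^1(G;\sheaf{F})_{\tors}| = |\mathbb{F}|^{\sum_{j=1}^r a_j}$ from Theorem~A and take $\log_{|\mathbb{F}|}$ of both sides. Since summands with $a_j = 0$ contribute $0$ to the exponent, the sum $\sum_{j=1}^r a_j$ is identical to $\sum_{\{j : a_j \ge 1\}} a_j$, matching the left-hand expression.

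There is no real obstacle; the only care needed is the bookkeeping around $a_j = 0$ summands and the base case $d_0 = 0$, both handled by the conventions already fixed in Theorem~A and Lemma~\ref{lem:digit-increment}. The corollary may be read as the statement that the total bar length (the ``area'' of the arithmetic barcode) equals the log-cardinality of the torsion subgroup, giving a single numerical invariant extracted from the digit histogram.
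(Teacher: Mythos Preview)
Your proposal is correct and matches the paper's treatment: the corollary is stated without a separate proof, the reasoning being embedded in the statement itself via the reference to Lemma~\ref{lem:digit-increment} and the order formula from Theorem~A. Your write-up simply makes explicit the reindexing and the $a_j = 0$ bookkeeping that the paper leaves to the reader.
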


The Digit-SNF Dictionary provides two computational routes to the invariant factors: 
compute the Smith normal form of $\coboundary$ directly over $\Ring$, or compute 
the sequence $\{\dim_{\mathbb{F}} \im(\Digit{k})\}_{k=0}^{\infty}$ via linear 
algebra over the residue field at successive precision levels. Since $H^1$ is 
finitely generated, the sequence stabilizes in finite time, requiring only finitely 
many computations.

\subsection{Examples with Torsion}
\label{ssec:triangle-example}

\begin{example}
\label{ex:triangle}    
Consider the triangle graph $C_3$ with vertices $v_1, v_2, v_3$ and oriented edges 
$e_{12}, e_{23}, e_{31}$. Define a rank-1 unit sheaf $\sheaf{F}$ over $\Ring = \Zp$ 
with all stalks equal to $\Zp$ and restriction maps using edge units $m_{e_{12}} = 1-p$, 
$m_{e_{23}} = m_{e_{31}} = 1$. The coboundary matrix is
\[
\coboundary = \begin{bmatrix}
-(1-p) & 1 & 0 \\
0 & -1 & 1 \\
1 & 0 & -1
\end{bmatrix}.
\]
Computing $\det(\coboundary) = p$ shows $\coboundary$ is not invertible over $\Zp$ 
but is invertible over $\Qp$. Since $h(C_3) = (1-p) \cdot 1 \cdot 1 = 1-p$, we have $1 - h(C_3) = p$, consistent with the determinant. Elementary row and column operations yield Smith 
normal form $\mathrm{diag}(1, 1, p)$ with invariant factors $(a_1, a_2, a_3) = (0, 0, 1)$. 
Thus $H^0 = 0$, $H^1 \cong \Zp/p\Zp$ is pure torsion, and the arithmetic barcode 
consists of a single bar of length $1$.

By Theorem A, $\dim_{\mathbb{F}_p} \im(\Digit{0}) = \#\{j : 1 \le a_j \le 0\} = 0$ 
and $\dim_{\mathbb{F}_p} \im(\Digit{1}) = \#\{j : 1 \le a_j \le 1\} = 1$ (counting only the single torsion exponent $a_3 = 1$), with $\dim \im(\Digit{k}) = 1$ for all $k \ge 1$. Reducing modulo $p$ gives $H^0(C_3; 
\sheaf{F}/p\sheaf{F}) \cong \mathbb{F}_p$ and $H^1(C_3; \sheaf{F}/p\sheaf{F}) 
\cong \mathbb{F}_p$, with Bockstein $\Bock: \mathbb{F}_p \to \mathbb{F}_p$ having 
rank $1$, confirming the prediction. The ascending flag structure is: $\im(\Digit{0}) = 0 
\subset \im(\Digit{1}) = \mathbb{F}_p$. The cycle holonomy is $h(C_3) = (1-p) \cdot 1 
\cdot 1 = 1-p$, giving $\valpi(h(C_3)-1) = \valpi(-p) = 1$, matching the bar 
length. For further interpretation via cycle holonomy, see Section \ref{sec:barcodes}.
\end{example}

\begin{example}[Two independent cycles]
\label{ex:two-cycles}
Consider the ``theta graph'' $\Theta$ consisting of two vertices $v_1, v_2$ connected by three edge-disjoint paths, forming three fundamental cycles $C_1, C_2, C_3$. Define a rank-1 unit sheaf over $\Ring = \Zp$ where each cycle has holonomy $h(C_1) = 1 + p^2$, $h(C_2) = 1 + p^3$, $h(C_3) = 1$ (the constant sheaf on the third path). By Theorem~\ref{thm:cycle-barcode}, we obtain bar lengths $\valpi(h(C_1)-1) = 2$, $\valpi(h(C_2)-1) = 3$, and $\valpi(h(C_3)-1) = \infty$ (a free generator). The arithmetic barcode is $\{[0,2), [0,3), [0,\infty)\}$.

Computing digit ranks: $H^0(\Theta;\sheaf{F}/p\sheaf{F})$ has dimension depending on connectivity, but the digit maps $\Digit{k}$ detect torsion progressively. For $k=1$: $\dim \im(\Digit{1}) = 0$ (no bars of length $\le 1$). For $k=2$: $\dim \im(\Digit{2}) = 1$ (detecting the $[0,2)$ bar). For $k=3$: $\dim \im(\Digit{3}) = 2$ (both finite bars). For $k \ge 4$: stabilizes at $2$. The difference sequence $(0,1,2,2,\ldots)$ has jumps at $k=2,3$, recovering bar lengths $\{2,3\}$ via Theorem~A, with the remaining dimension accounting for the free part.
\end{example}

\section{Arithmetic Persistence and Stability}
\label{sec:barcodes}

Persistent homology tracks topological features across a geometric filtration, encoding birth and death of cycles as a barcode or persistence diagram. Our theory provides a complementary notion of persistence arising not from geometric scale but from algebraic precision. The valuation filtration $\{\unif^k H^i(G;\sheaf{F})\}_{k\ge 0}$ measures how cohomology classes degrade as precision decreases from infinite ($\Ring$-coefficients) through successive truncations (mod $\unif^k$). The digit connecting maps $\Digit{k}$ and Smith normal form exponents $\{a_j\}$ computed in Theorem~A encode a barcode structure. We develop this perspective here, showing how torsion translates to finite-length bars, proving stability properties that make these invariants robust under perturbations, and demonstrating through cycle holonomy that bar length measures precision of consistency around loops.

\subsection{Valuation Persistence Modules and Barcodes}
\label{ssec:valuation-persistence}

The valuation filtration on cohomology defines persistence modules in the sense of topological data analysis. For each cohomological degree $i$, the descending chain
\[
H^i(G;\sheaf{F}) \supseteq \unif H^i(G;\sheaf{F}) \supseteq \unif^2 H^i(G;\sheaf{F}) \supseteq \cdots
\]
yields an associated graded object that we package as a persistence module over the naturals.

\begin{definition}
\label{def:valuation-persistence}
For a network sheaf $\sheaf{F}$ on a graph $G$ with stalks free over $\Ring$, the \emph{valuation persistence module} in degree $i$ is the sequence $(V^i_\bullet, \mu_\bullet)$ where
\[
V^i_k := \frac{\unif^k H^i(G;\sheaf{F})}{\unif^{k+1} H^i(G;\sheaf{F})} \cong \gr^k H^i(G;\sheaf{F})
\]
for $k \ge 0$, equipped with multiplication maps
\[
\mu_k: V^i_k \to V^i_{k+1}, \qquad \mu_k([x]) = [\unif x].
\]
Each $V^i_k$ is naturally an $\mathbb{F}$-vector space, where $\mathbb{F} = \Ring/\unif$ is the residue field.
\end{definition}

The map $\mu_k$ is well-defined since multiplication by $\unif$ takes $\unif^k H^i$ into $\unif^{k+1} H^i$ and $\unif^{k+1} H^i$ into $\unif^{k+2} H^i$. This forms a persistence module in the standard sense: a functor from $(\mathbb{N}, \le)$ to finite-dimensional $\mathbb{F}$-vector spaces.

For graphs, Theorem~A provides a dual perspective via digit maps. Recall that the digit connecting homomorphism
\[
\Digit{k}: H^0(G;\sheaf{F}/\unif^k\sheaf{F}) \longrightarrow H^1(G;\sheaf{F}/\unif\sheaf{F})
\]
arises from the short exact sequence $0 \to \unif^k\sheaf{F}/\unif^{k+1}\sheaf{F} \to \sheaf{F}/\unif^{k+1}\sheaf{F} \to \sheaf{F}/\unif^k\sheaf{F} \to 0$. These images form an ascending flag
\[
\im(\Digit{0}) \subseteq \im(\Digit{1}) \subseteq \im(\Digit{2}) \subseteq \cdots \subseteq H^{i+1}(G;\sheaf{F}/\unif\sheaf{F}),
\]
in the fixed $\mathbb{F}$-vector space $H^{i+1}(G;\sheaf{F}/\unif\sheaf{F})$. The Digit-SNF Dictionary establishes that this flag and the persistence module $V^{i+1}_\bullet$ encode identical information: knowing $\{\dim_{\mathbb{F}} \im(\Digit{k})\}_{k \ge 0}$ determines the structure of $V^{i+1}_\bullet$ completely.

The structure theorem for finitely generated modules over a principal ideal domain immediately classifies valuation persistence modules in terms of interval summands.

\begin{definition}[Valuation barcode]
\label{def:valuation-barcode}
If $H^i(G;\sheaf{F})\cong \Ring^{b_i}\oplus\bigoplus_{j=1}^{r_i} \Ring/\unif^{a_{i,j}}$,
the \emph{valuation barcode} is the multiset
\[
\mathrm{Bar}^i_\unif(G;\sheaf{F})\ :=\ \underbrace{[0,\infty)\sqcup\cdots\sqcup[0,\infty)}_{b_i}
\ \sqcup\ \bigsqcup_{j=1}^{r_i} [0,a_{i,j}).
\]
\end{definition}

\begin{theorem}[Barcode Decomposition]
\label{thm:barcode-decomposition}
Let $H^i(G;\sheaf{F}) \cong \Ring^{b_i} \oplus \bigoplus_{j=1}^{r_i} \Ring/\unif^{a_{i,j}}$ be the invariant factor decomposition, with $0 < a_{i,1} \le a_{i,2} \le \cdots \le a_{i,r_i}$. Then the valuation persistence module decomposes as
\[
V^i_\bullet \cong \bigoplus_{t=1}^{b_i} \mathbb{F}[0,\infty) \oplus \bigoplus_{j=1}^{r_i} \mathbb{F}[0, a_{i,j}),
\]
where $\mathbb{F}[s,t)$ denotes the interval persistence module with $\mathbb{F}$ at indices $k \in [s,t)$ and zero elsewhere, with identity transition maps within the interval.
\end{theorem}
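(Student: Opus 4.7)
The plan is to combine the PID structure theorem with additivity of the construction $H \mapsto V^i_\bullet(H)$, reducing the claim to an explicit computation on each invariant factor of $H^i(G;\sheaf{F})$. Since the valuation filtration $\{\unif^k H\}_{k\ge 0}$ and the associated graded functor $\gr^\bullet$ both commute with direct sums, and the transition map $\mu_k$ (multiplication by $\unif$) respects this decomposition, the persistence module $V^i_\bullet$ splits as a direct sum of the persistence modules attached to each free summand $\Ring$ and each torsion summand $\Ring/\unif^{a_{i,j}}$.

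Next I would compute each piece explicitly. For the free summand $\Ring$, the quotient $\unif^k\Ring/\unif^{k+1}\Ring$ is a one-dimensional $\mathbb{F}$-vector space spanned by the class of $\unif^k$, and $\mu_k$ sends this generator to $[\unif^{k+1}]$, giving an isomorphism at every index. This matches the interval persistence module $\mathbb{F}[0,\infty)$. For a torsion summand $\Ring/\unif^a$ with $a \ge 1$, I would verify by direct inspection that $\unif^k(\Ring/\unif^a)/\unif^{k+1}(\Ring/\unif^a) \cong \mathbb{F}$ for $0 \le k < a$ and vanishes for $k \ge a$, with $\mu_k$ an isomorphism for $0 \le k < a-1$ and zero at $k = a-1$. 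This is precisely the interval module $\mathbb{F}[0,a)$.

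The main subtlety lies not in the dimension count, which follows immediately from Theorem~A, but in verifying that the transition maps $\mu_k$ remain nonzero throughout the length of each torsion bar, so that the resulting persistence module is a single interval $\mathbb{F}[0,a)$ rather than a sum of isolated one-dimensional modules at different indices. This reduces to checking that multiplication by $\unif$ induces an isomorphism $\unif^k\Ring/\unif^{k+1}\Ring \xrightarrow{\cong} \unif^{k+1}\Ring/\unif^{k+2}\Ring$ whenever both graded pieces are nonzero, which holds because $\unif$ is a non-zero-divisor in the domain $\Ring$ and the quotient $\unif^k\Ring/\unif^{k+1}\Ring$ is a free $\mathbb{F}$-module of rank one generated by the class of $\unif^k$. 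Assembling the contributions from each summand via the additivity established in the first paragraph yields the claimed decomposition, and uniqueness of the bar lengths then follows from the uniqueness of invariant factors in the PID structure theorem.
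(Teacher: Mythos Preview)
Your proposal is correct and takes essentially the same approach as the paper: reduce to individual invariant factors by additivity of the associated graded construction, compute $\gr^k$ explicitly for $\Ring$ and for $\Ring/\unif^a$, and verify that the multiplication maps $\mu_k$ are isomorphisms within each interval's support. Your treatment is in fact more careful than the paper's terse proof about the transition-map subtlety; the passing reference to Theorem~A for the dimension count is unnecessary (you already established it by direct computation), but this does no harm.
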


\begin{proof}
The associated graded of a free summand $\Ring$ is $\gr^k(\Ring) = \unif^k \Ring/\unif^{k+1}\Ring \cong \mathbb{F}$ for all $k \ge 0$, giving an infinite interval $[0,\infty)$. For a torsion summand $\Ring/\unif^a$, we have
\[
\gr^k(\Ring/\unif^a) \cong \begin{cases}
\mathbb{F} & \text{if } 0 \le k < a, \\
0 & \text{if } k \ge a,
\end{cases}
\]
giving a finite interval $[0,a)$. The multiplication maps $\mu_k$ act as identity on these summands within their support and are zero outside. Decomposition of the module yields decomposition of persistence.
\end{proof}


The barcode $\mathrm{Bar}^i_\unif(G;\sheaf{F}) := \{[0,\infty)^{b_i}, [0,a_{i,1}), \ldots, [0,a_{i,r_i})\}$ thus encodes the complete persistence structure. Each infinite bar corresponds to a torsion-free cohomology class that persists through all precision levels. Each finite bar $[0,a_j)$ corresponds to a $\unif^{a_j}$-torsion class: it survives reduction modulo $\unif^k$ for $k < a_j$ but vanishes modulo $\unif^{a_j}$. The bar length $a_j$ measures the precision at which the class fails to lift from mod $\unif^{a_j}$ to full $\Ring$-coefficients.

By Theorem~A, for graphs the digit maps target degree~1 cohomology. Writing $\{a_j\}$ for the Smith exponents of $H^1(G;\sheaf{F})$ and $r$ for the number of torsion summands,
\[
d_k:=\dim_{\mathbb{F}}\im(\Digit{k})=\#\{\,j:\ 1\le a_j\le k\,\},
\qquad
\#\{j:\ a_j=\ell\}=d_\ell-d_{\ell-1}\ (\ell\ge1),\ \ d_0=0.
\]
Equivalently, the number of bars of length strictly greater than $k$ is $r-d_k$.

This provides two computational routes to the barcode: compute the Smith normal form of $\coboundary: C^0 \to C^1$ directly to extract exponents $\{a_j\}$, or compute the sequence of digit map ranks $\{\dim \im(\Digit{k})\}_{k=0}^{\infty}$ via linear algebra over $\mathbb{F}$ at successive precision levels. The sequences stabilize in finite time since $H^i(G;\sheaf{F})$ is finitely generated, so only finitely many computations are needed.

\subsection{Cycle Holonomy: Geometric Interpretation of Barcodes}
\label{ssec:cycle-holonomy}

The barcode classification becomes geometrically transparent for a special class of sheaves where edge restrictions encode unit scalings. These sheaves model synchronization problems where local measurements differ by multiplicative calibration factors. Throughout this subsection, identities involving valuations are recorded up to units in $\Ring^\times$ without further comment, since only the valuation is invariant.

\begin{definition}
\label{def:unit-sheaf}
A \emph{rank-1 unit sheaf} $\sheaf{F}$ on a graph $G$ assigns stalk $\stalk{\sheaf{F}}{\sigma} = \Ring$ to each vertex and edge, with restriction maps determined by units $m_e \in \Ring^\times$ for each edge $e = \{u,v\}$:
\[
\res{\sheaf{F}}{u}{e}(x) = x, \qquad \res{\sheaf{F}}{v}{e}(x) = m_e x.
\]
For a cycle $C = (e_1, e_2, \ldots, e_n)$ in $G$, the \emph{holonomy} is
\[
h(C) := \prod_{i=1}^n m_{e_i} \in \Ring^\times,
\]
where the product follows the orientation of the cycle.
\end{definition}

The holonomy $h(C)$ measures the accumulated discrepancy around the loop. A section $s \in C^0(G;\sheaf{F})$ satisfies the cocycle condition $\coboundary s = 0$ if and only if it is constant around every cycle after accounting for edge scalings. For a cycle graph, this global condition reduces to a single compatibility constraint.

\begin{theorem}[Cycle Barcode]
\label{thm:cycle-barcode}
For the cycle graph $C_n$ with a rank-1 unit sheaf $\sheaf{F}$ having holonomy $h(C_n)$, the cohomology decomposes as follows:
\begin{enumerate}[label=\textup{(\roman*)}]
\item If $h(C_n) = 1$ (the constant sheaf): $H^0(C_n;\sheaf{F}) \cong \Ring$ and $H^1(C_n;\sheaf{F}) \cong \Ring$ (both free). The barcode consists of one infinite bar $[0,\infty)$ at degree $1$.

\item If $h(C_n) - 1$ is a unit in $\Ring$: $H^0(C_n;\sheaf{F}) = 0$ and $H^1(C_n;\sheaf{F}) = 0$. The barcode is empty.

\item If $\valpi(h(C_n)-1) = a > 0$: $H^0(C_n;\sheaf{F}) = 0$ and $H^1(C_n;\sheaf{F}) \cong \Ring/\unif^a$ (pure torsion). The barcode consists of a single finite bar $[0,a)$.
\end{enumerate}
\end{theorem}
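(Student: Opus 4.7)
The plan is to compute $H^0$ and $H^1$ directly from the coboundary, without invoking Smith normal form, and then read off the barcode from Theorem~\ref{thm:barcode-decomposition}. Orient the cycle so that $e_i$ runs from $v_i$ to $v_{i+1}$ (indices mod $n$), with $\res{\sheaf{F}}{v_i}{e_i} = \mathrm{id}$ and $\res{\sheaf{F}}{v_{i+1}}{e_i} = m_i\cdot$. Then by Definition~\ref{def:sheaf-cochains}, a $0$-cochain $s = (s_1,\ldots,s_n) \in \Ring^n$ has coboundary $(\coboundary s)(e_i) = s_i - m_i s_{i+1}$.

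First I would compute $H^0 = \ker \coboundary$. The cocycle equations $s_i = m_i s_{i+1}$ unwind iteratively to give $s_i = (m_i m_{i+1} \cdots m_{i-1}) s_i$ after going once around, so every $s_i$ satisfies $(1 - h(C_n)) s_i = 0$. Since $\Ring$ is a domain, this yields $s_i = 0$ whenever $h(C_n) \neq 1$, giving case (ii) and the $H^0$ part of case (iii); when $h(C_n) = 1$, any choice of $s_1$ extends uniquely to a cocycle via $s_{i+1} = m_i^{-1} s_i$, giving $H^0 \cong \Ring$ (case (i)).

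Next I would compute $H^1 = C^1/\im \coboundary$ by the same iterative unwinding, now treating the $1$-cochain $\alpha = (\alpha_1, \ldots, \alpha_n)$ as a parameter. Fix $s_1$ and solve $s_{i+1} = m_i^{-1}(s_i - \alpha_i)$ step by step; the closure condition $s_{n+1} = s_1$ after one trip around the cycle produces a single scalar equation of the form $(h(C_n) - 1) s_1 = -\phi(\alpha)$, where $\phi: C^1 \to \Ring$ is the $\Ring$-linear functional $\phi(\alpha) = \sum_{j=1}^n (m_1 \cdots m_{j-1}) \alpha_j$. Hence $\alpha \in \im \coboundary$ if and only if $\phi(\alpha) \in (h(C_n)-1)\Ring$. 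Since $\phi$ is surjective (take $\alpha_1 = 1$, all others zero), this identifies
\[
H^1(C_n;\sheaf{F}) \;\cong\; \Ring/(h(C_n)-1)\Ring.
\]
Specializing via the DVR structure $(h(C_n)-1) = \unif^{\valpi(h(C_n)-1)} \cdot (\text{unit})$ gives the three cases: $\Ring$ if $h(C_n)=1$, the zero module if $h(C_n)-1 \in \Ring^\times$, and $\Ring/\unif^a$ if $\valpi(h(C_n)-1) = a \ge 1$. The barcode assertion in each case is immediate from Definition~\ref{def:valuation-barcode} and Theorem~\ref{thm:barcode-decomposition}.

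The main care required is just bookkeeping: getting the sign/orientation convention consistent so that the holonomy $h(C_n) = \prod m_i$ (rather than its inverse) is what appears in the closure equation, and verifying that $\phi$ really is surjective so that the cokernel is exactly $\Ring/(h(C_n)-1)\Ring$ rather than an unnecessary extension. No genuine obstacle arises because $\Ring$ is a domain and each $m_i$ is a unit, so inverses and the ``$(h-1)s_1 = 0 \Rightarrow s_1 = 0$'' step are automatic.
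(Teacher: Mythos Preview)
Your proof is correct. The $H^0$ computation is identical to the paper's, but your $H^1$ computation takes a genuinely different route. The paper writes out the $n\times n$ coboundary matrix, computes $\det(\coboundary)$ by cofactor expansion to get $\valpi(\det\coboundary)=\valpi(h(C_n)-1)$, and then reads off the Smith normal form $\mathrm{diag}(1,\ldots,1,\unif^a)$ (or $\mathrm{diag}(1,\ldots,1,0)$ in case~(i)) to identify $H^1$ in each case separately. You instead solve the inhomogeneous system $\coboundary s=\alpha$ by forward substitution, extract the single closure constraint $(h-1)s_1=-\phi(\alpha)$, observe that $\phi$ is surjective, and conclude $H^1\cong\Ring/(h-1)\Ring$ uniformly, specializing to the three cases only at the end.

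Your approach is more elementary and self-contained: it needs neither determinants nor the Smith normal form theorem, and it produces the single formula $H^1\cong\Ring/(h-1)$ from which all three cases fall out at once. The paper's approach, by contrast, ties the example explicitly back into the SNF machinery of Section~\ref{sec:snf}, which is thematically appropriate given that the Digit--SNF Dictionary is the paper's main result. Your surjective functional $\phi$ is essentially the last row of $U$ in a Smith factorization $U\coboundary V$, so the two arguments are dual views of the same linear algebra; yours just avoids naming it.
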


\begin{proof}
Label vertices $v_0, \ldots, v_{n-1}$ and oriented edges $e_i = (v_i, v_{i+1})$ with indices modulo $n$. The coboundary $\coboundary: C^0 \to C^1$ is represented by the $n \times n$ matrix
\[
\coboundary = \begin{pmatrix}
-1 & m_0 & 0 & \cdots & 0 \\
0 & -1 & m_1 & \cdots & 0 \\
\vdots & \vdots & \ddots & \ddots & \vdots \\
0 & 0 & \cdots & -1 & m_{n-2} \\
m_{n-1} & 0 & \cdots & 0 & -1
\end{pmatrix}.
\]
Computing the determinant by cofactor expansion along the last row yields
\[
\det(\coboundary) = 1 - h(C_n) = u \cdot (h(C_n) - 1)
\]
for a unit $u \in \Ring^\times$. Consequently, $\valpi(\det(\coboundary)) = \valpi(h(C_n) - 1)$.

A cocycle $(x_0, \ldots, x_{n-1}) \in \ker \coboundary$ satisfies $m_i x_{i+1} = x_i$ for all $i$ (indices mod $n$), forcing $x_{i+1} = m_i^{-1}x_i$. Going around the cycle gives $x_0 = (m_0 \cdots m_{n-1})^{-1}x_0 = h(C_n)^{-1}x_0$. Thus $(h(C_n) - 1)x_0 = 0$ in $\Ring$.

\textit{Case (i): $h(C_n) = 1$.} Then $\det(\coboundary) = 0$, so $\coboundary$ is not invertible. The kernel is $\ker \coboundary = \{(x,x,\ldots,x) : x \in \Ring\} \cong \Ring$. Since $\rank(\coboundary) = n-1$ (as the $(n-1) \times (n-1)$ upper-left submatrix is invertible), the Smith form is $\mathrm{diag}(1,\ldots,1,0)$ with $(n-1)$ ones. Thus $H^1 = \coker(\coboundary) \cong \Ring$ is free.

\textit{Case (ii): $h(C_n) - 1 \in \Ring^\times$.} Then $\det(\coboundary)$ is a unit, so $\coboundary$ is invertible. Thus $\ker \coboundary = 0$ and $\coker(\coboundary) = 0$.

\textit{Case (iii): $\valpi(h(C_n)-1) = a > 0$.} Write $h(C_n) - 1 = u\unif^a$ with $u \in \Ring^\times$. Then $\det(\coboundary) = u\unif^a$ is neither zero nor a unit. The equation $(h(C_n) - 1)x_0 = 0$ has only the trivial solution $x_0 = 0$ in the free module $\Ring$, so $\ker \coboundary = 0$. The Smith normal form is $\mathrm{diag}(1,\ldots,1,\unif^a)$ (with $(n-1)$ ones), giving $H^1 \cong \Ring/\unif^a$, a single bar of length $a$.
\end{proof}

\begin{corollary}
A cycle with holonomy $h(C)$ is consistent through $k$ digits of precision -- meaning all cocycles lift from $\sheaf{F}/\unif^k\sheaf{F}$ to $\sheaf{F}/\unif^{k+1}\sheaf{F}$ -- if and only if $h(C) \equiv 1 \pmod{\unif^{k+1}}$. The bar length $\valpi(h(C)-1)$ is the maximal precision at which consistency holds.
\end{corollary}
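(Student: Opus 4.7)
The plan is to translate the lifting question into the vanishing of the digit connecting map and then invoke the Cycle Barcode (Theorem~\ref{thm:cycle-barcode}) together with the Digit-SNF Dictionary (Theorem~A) to read off the answer. First, I would recall from the long exact sequence in Theorem~\ref{thm:les} applied to the short exact sequence $0 \to \unif^k\sheaf{F}/\unif^{k+1}\sheaf{F} \to \sheaf{F}/\unif^{k+1}\sheaf{F} \to \sheaf{F}/\unif^k\sheaf{F} \to 0$ that a class $\alpha \in H^0(C_n;\sheaf{F}/\unif^k\sheaf{F})$ lifts to $H^0(C_n;\sheaf{F}/\unif^{k+1}\sheaf{F})$ precisely when $\Digit{k}(\alpha)=0$. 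Hence \emph{all} mod-$\unif^k$ cocycles lift to mod-$\unif^{k+1}$ if and only if the digit map $\Digit{k}$ vanishes identically.

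Next, I would specialize to the cycle $C_n$ using Theorem~\ref{thm:cycle-barcode}. In the generic torsion case, writing $a := \valpi(h(C_n)-1)$, the invariant factor list of $H^1(C_n;\sheaf{F})$ consists of a single positive exponent $a$. By the Digit-SNF Dictionary (Theorem~A),
\[
\dim_{\mathbb{F}} \im(\Digit{k}) \;=\; \#\{\,j:\ 1\le a_j\le k\,\} \;=\; \begin{cases} 0 & k<a,\\ 1 & k\ge a.\end{cases}
\]
Thus $\Digit{k}=0$ iff $k<a$, which is equivalent to $\valpi(h(C_n)-1)\ge k+1$, i.e., $h(C_n)\equiv 1\pmod{\unif^{k+1}}$. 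This establishes the biconditional.

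For the second assertion, I would observe that the set of precision levels at which consistency holds is $\{k\ge 0 : k<a\}=\{0,1,\dots,a-1\}$, an interval of length $a$. This is exactly the half-open bar $[0,a)\in\mathrm{Bar}^1_\unif(C_n;\sheaf{F})$, so the bar length $a = \valpi(h(C_n)-1)$ coincides with the total number of precision levels through which consistency persists, and precision level $a$ is precisely where consistency first fails.

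The proof is essentially a translation; the only bookkeeping to handle is the degenerate cases from Theorem~\ref{thm:cycle-barcode}. When $h(C_n)=1$ the barcode contains an infinite bar $[0,\infty)$ and all digit maps are identically zero (consistency at every level, matching $\valpi(h(C_n)-1)=\infty$); when $h(C_n)-1\in\Ring^\times$ we have $a=0$, both $H^0$ and $H^1$ vanish, and the biconditional holds vacuously with $\Digit{0}=0$ trivially. I expect no substantive obstacle beyond confirming that these edge cases fit the stated formula.
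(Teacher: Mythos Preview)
Your argument via the digit map and Theorem~A is correct in the principal cases (i) and (iii), and the route through $\Digit{k}=0 \Longleftrightarrow k<a$ is a clean way to obtain the biconditional. The paper states this corollary without proof, treating it as immediate from the explicit cocycle description inside the proof of Theorem~\ref{thm:cycle-barcode}: a mod-$\unif^k$ cocycle is determined by $x_0$ satisfying $(h(C)-1)x_0\equiv 0\pmod{\unif^k}$, and such $x_0$ lifts to a mod-$\unif^{k+1}$ solution for every choice precisely when $\valpi(h(C)-1)\ge k+1$. Your detour through Theorem~A is more machinery than strictly needed, but perfectly valid and arguably more conceptual.

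One correction to your edge-case bookkeeping: in case~(ii), where $h(C)-1\in\Ring^\times$, you assert ``the biconditional holds vacuously,'' but this is not so. The left-hand side (all cocycles lift) is vacuously \emph{true} for every $k\ge 1$ since $H^0(C_n;\sheaf{F}/\unif^k)=0$, while the right-hand side $h(C)\equiv 1\pmod{\unif^{k+1}}$ is \emph{false} for every $k\ge 0$. So the biconditional, read literally, fails in this degenerate case. This is a wrinkle in the corollary's phrasing rather than a flaw in your method---the statement implicitly targets the situation where nonzero mod-$\unif$ cocycles exist, i.e., $h(C)\equiv 1\pmod{\unif}$---but you should flag this caveat rather than claim the case is covered.
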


\begin{example}
\label{ex:triangle-holonomy}
Consider the triangle $C_3$ from Example~\ref{ex:triangle} with $\Ring = \Zp$, $\unif = p$, and edge units $m_{e_{12}} = 1-p$, $m_{e_{23}} = m_{e_{31}} = 1$. The holonomy is $h(C_3) = (1-p) \cdot 1 \cdot 1 = 1-p$, giving $\valpi(h(C_3)-1) = \valpi(-p) = 1$. By Theorem~\ref{thm:cycle-barcode}, $H^1(C_3;\sheaf{F}) \cong \Zp/p\Zp$ with barcode $\{[0,1)\}$: a single bar of length 1. The cycle is consistent modulo $p$ but fails at precision $p^2$. This matches the Smith normal form calculation showing $\coboundary$ has diagonal form $\mathrm{diag}(1,1,p)$.
\end{example}

For general graphs, cohomology is determined by holonomies around a cycle basis. If $G$ has first Betti number $\beta_1(G) = |E| - |V| + c$ where $c$ is the number of connected components, choose a maximal spanning forest and let $\{C_1, \ldots, C_{\beta_1}\}$ be the fundamental cycles obtained by adding back the remaining edges one at a time. When these cycles are edge-disjoint -- which holds generically, for instance when $G$ is a planar graph and the cycles are faces -- their holonomies contribute independently to torsion.

\begin{proposition}[Independent cycles under block decomposition]
\label{prop:independent-cycles}
Suppose there exists a spanning forest $T$ such that, after a gauge along $T$ and appropriate reordering of vertices and edges, the coboundary matrix $\coboundary$ is block diagonal with one block for each edge-disjoint fundamental cycle $C_i$. For a rank-1 unit sheaf,
\[
H^1_{\tors}(G;\sheaf{F}) \cong \bigoplus_{i=1}^\beta \Ring/(h(C_i)-1),
\]
with barcode $\{[0, \valpi(h(C_i)-1))\}_{i=1}^\beta$.
\end{proposition}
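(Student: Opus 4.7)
The plan is to reduce the proposition to $\beta$ independent applications of Theorem~\ref{thm:cycle-barcode}. The hypothesis that the coboundary is gauge-equivalent to a block diagonal matrix already splits $\coboundary$ into one block per fundamental cycle; the content of the proof is to identify each block with the cyclic coboundary of Theorem~\ref{thm:cycle-barcode} and then assemble the resulting Smith forms.

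First, I would formalize the gauge along $T$. For a rank-1 unit sheaf, a vertex gauge is a choice of unit $g_v \in \Ring^\times$ at each vertex, which transforms an oriented edge unit by $m_e \mapsto g_v m_e g_u^{-1}$ for $e = (u,v)$, together with a compatible edge gauge. Setting $g_v$ equal to the product of edge units along the unique tree path from a chosen root to $v$ trivializes every tree-edge restriction map to the identity, while leaving each cycle holonomy $h(C_i)$ unchanged, since the gauge factors telescope around any closed loop. Because a gauge is a sheaf isomorphism, the gauged coboundary has the same kernel, cokernel, and Smith form as the original. Under the proposition's hypothesis, the gauged coboundary is block diagonal with one block $B_i$ supported on the vertices and edges of $C_i$.

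Finally, I would identify each $B_i$ with the coboundary of a rank-1 unit sheaf on the cycle graph $C_{n_i}$ with holonomy $h(C_i)$ and invoke Theorem~\ref{thm:cycle-barcode} blockwise. Each $B_i$ contributes either nothing (when $h(C_i)-1$ is a unit), a free summand (when $h(C_i)=1$), or the torsion summand $\Ring/(h(C_i)-1)$ (when $\valpi(h(C_i)-1)>0$). Smith normal forms concatenate across a direct sum, yielding the claimed decomposition of $H^1_{\tors}(G;\sheaf{F})$, and Theorem~\ref{thm:barcode-decomposition} translates this into the stated barcode. The main obstacle is the block-identification step: one must check that, after tree edges within $C_i$ are gauged to identity, the rows of $B_i$ corresponding to tree edges generate a saturated corank-one sublattice of the column span, so that further unimodular row and column operations collapse $B_i$ to $\mathrm{diag}(1,\ldots,1,\,h(C_i)-1)$. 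Edge-disjointness is what makes this clean; without it, tree edges shared between cycles would couple the blocks and prevent the direct-sum decomposition.
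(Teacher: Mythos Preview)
Your proposal is correct and follows essentially the same route as the paper: reduce to Theorem~\ref{thm:cycle-barcode} on each block, then concatenate Smith normal forms across the direct sum. The paper's proof is a three-sentence version of this---it simply asserts that block-diagonality makes the Smith form a direct sum of block Smith forms and invokes Theorem~\ref{thm:cycle-barcode}---so your additional care (formalizing the gauge, the case split on $h(C_i)-1$, and the remark on why edge-disjointness is needed) is elaboration rather than a different argument.
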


\begin{proof}
The block diagonal structure ensures that the Smith normal form of $\coboundary$ is the direct sum of the Smith normal forms of the individual blocks. Each block corresponds to a single cycle $C_i$, and by Theorem \ref{thm:cycle-barcode}, contributes $\Ring/(h(C_i)-1)$ to torsion. The result follows.
\end{proof}

\subsection{Stability of Arithmetic Barcodes}
\label{ssec:barcode-stability}

A fundamental question for any invariant derived from measured data is robustness under perturbation. For arithmetic barcodes, the relevant notion of proximity is $\unif$-adic distance on coboundary matrices: two coboundaries $\coboundary, \coboundary': C^0 \to C^1$ are close if they agree modulo high powers of $\unif$. The digit-SNF dictionary of Theorem~A immediately yields stability results, since digit maps at each level depend only on the coboundary reduced modulo $\unif^{k+1}$. We establish that congruence modulo $\unif^m$ completely determines all bar lengths shorter than $m$, with the threshold case -- where $m$ exceeds all bar lengths -- giving exact barcode preservation.

Throughout this subsection, write $\{a_j\}_{j=1}^r$ for the Smith normal form exponents of $\coboundary$ (the positive ones; zero exponents contribute no torsion), and recall from Theorem~A that
\[
d_k := \dim_{\mathbb{F}} \im(\Digit{k}) = \#\{j : 1 \le a_j \le k\}
\]
for $k \ge 0$, where $\mathbb{F} = \Ring/\unif$ is the residue field. The number of bars of length exactly $\ell$ is $d_\ell - d_{\ell-1}$ for $\ell \ge 1$, with $d_0 = 0$.

\begin{lemma}[Locality of the Digit Map]
\label{lem:digit-locality}
Fix $k \geq 0$. The digit connecting homomorphism
\[
\Digit{k}: H^0(G;\sheaf{F}/\unif^k\sheaf{F}) \longrightarrow H^1(G;\sheaf{F}/\unif\sheaf{F})
\]
depends only on the reduction of the coboundary operator modulo $\unif^{k+1}$. That is, if $\coboundary \equiv \coboundary' \pmod{\unif^{k+1}}$, then $\Digit{k}(\coboundary) = \Digit{k}(\coboundary')$.
\end{lemma}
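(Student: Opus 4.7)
The plan is to unpack the snake-lemma recipe for $\Digit{k}$ and observe that every step uses only the reduction $\coboundary \bmod \unif^{k+1}$; congruence at that level then forces the two digit maps to coincide on the nose. No subtle inequalities or perturbation estimates are needed.

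First I would fix a class $\alpha \in H^0(G;\sheaf{F}/\unif^k\sheaf{F})$ represented by a cochain $s \in C^0(G;\sheaf{F}/\unif^k\sheaf{F})$ with $(\coboundary \bmod \unif^k)(s) = 0$. The connecting homomorphism arising from
\[
0 \to \unif^k\sheaf{F}/\unif^{k+1}\sheaf{F} \to \sheaf{F}/\unif^{k+1}\sheaf{F} \to \sheaf{F}/\unif^k\sheaf{F} \to 0
\]
is computed by choosing any set-theoretic lift $\tilde{s} \in C^0(G;\sheaf{F}/\unif^{k+1}\sheaf{F})$ of $s$, applying the middle coboundary $\coboundary \bmod \unif^{k+1}$, and observing that the resulting element lies in the subcomplex $\unif^k C^1/\unif^{k+1}C^1$, canonically identified with $C^1(G;\sheaf{F}/\unif\sheaf{F})$ by Lemma~\ref{lem:sheaf-filtration-iso}. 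Passing to cohomology yields $\Digit{k}(\alpha)$. I would explicitly note that the lifting step requires no data from $\coboundary$ whatsoever (it is purely set-theoretic on free $\Ring/\unif^{k+1}$-modules), and that the only place $\coboundary$ enters is through its mod-$\unif^{k+1}$ reduction applied to $\tilde{s}$.

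Next I would invoke naturality of the snake lemma. The three coboundaries appearing in the short exact sequence of complexes — on the subcomplex $\unif^k C^\bullet/\unif^{k+1}C^\bullet$, the middle complex $C^\bullet/\unif^{k+1}C^\bullet$, and the quotient $C^\bullet/\unif^k C^\bullet$ — are all induced by the single matrix $[\coboundary]$ reduced modulo $\unif^{k+1}$ (the outer two being further reductions thereof). Therefore, whenever $\coboundary \equiv \coboundary' \pmod{\unif^{k+1}}$, the two short exact sequences of chain complexes are literally identical as diagrams of $\Ring/\unif^{k+1}$-modules with specified differentials, and the identity on each term is trivially a morphism between them. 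Functoriality of connecting homomorphisms in morphisms of short exact sequences of complexes then gives $\Digit{k}(\coboundary) = \Digit{k}(\coboundary')$.

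There is no real obstacle: the statement is essentially built into the definition once the snake-lemma recipe is written out. The only point worth recording explicitly is independence of the choice of lift $\tilde{s}$, which is the standard snake-lemma verification — two lifts differ by an element of $\unif^k C^0/\unif^{k+1}C^0$, whose image under $\coboundary \bmod \unif^{k+1}$ represents the zero class in $H^1(G;\sheaf{F}/\unif\sheaf{F})$ after the identification — and this verification itself only uses $\coboundary \bmod \unif^{k+1}$, so the well-definedness argument is stable under the same hypothesis.
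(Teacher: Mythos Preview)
Your proof is correct and takes essentially the same approach as the paper: both observe that the short exact sequence of complexes defining $\Digit{k}$ depends only on $\coboundary \bmod \unif^{k+1}$, and then invoke functoriality of connecting homomorphisms to conclude. Your version is more explicit about the snake-lemma mechanics (the lift, the application of $\coboundary \bmod \unif^{k+1}$, and independence of the lift), while the paper compresses this into a single sentence, but the underlying argument is identical.
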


\begin{proof}
The digit map arises as the connecting homomorphism in the long exact cohomology sequence induced by
\[
0 \to \unif^k\sheaf{F}/\unif^{k+1}\sheaf{F} \longrightarrow \sheaf{F}/\unif^{k+1}\sheaf{F} \longrightarrow \sheaf{F}/\unif^k\sheaf{F} \to 0.
\]
All three terms and their cochain complexes are obtained from $(C^\bullet(G;\sheaf{F}),\coboundary)$ by reduction modulo $\unif^{k+1}$. Replacing $\coboundary$ by any $\coboundary' \equiv \coboundary \pmod{\unif^{k+1}}$ produces the identical short exact sequence of complexes over $\Ring/\unif^{k+1}$. By functoriality of connecting homomorphisms with respect to maps of short exact sequences, the resulting digit maps coincide.
\end{proof}

We now prove Theorem~C from Section~\ref{sec:intro}.

\begin{proof}[Proof of Theorem C]
Assume $\coboundary \equiv \coboundary' \pmod{\unif^m}$ and fix $k < m$. Then $k+1 \le m$, so $\coboundary \equiv \coboundary' \pmod{\unif^{k+1}}$. By Lemma~\ref{lem:digit-locality}, the digit connecting maps coincide:
\[
\Digit{k}(\coboundary) = \Digit{k}(\coboundary'): H^0(G;\sheaf{F}/\unif^k\sheaf{F}) \longrightarrow H^1(G;\sheaf{F}/\unif\sheaf{F}).
\]
Hence $d_k(\coboundary) = \dim_{\mathbb{F}} \im(\Digit{k}(\coboundary)) = \dim_{\mathbb{F}} \im(\Digit{k}(\coboundary')) = d_k(\coboundary')$ for all $k < m$.

By the Digit-SNF Dictionary (Theorem~A),
\[
\#\{j : a_j(\coboundary) = \ell\} = d_\ell(\coboundary) - d_{\ell-1}(\coboundary) = d_\ell(\coboundary') - d_{\ell-1}(\coboundary') = \#\{j : a_j(\coboundary') = \ell\}
\]
for every $1 \le \ell < m$. Thus the truncated barcodes $\mathrm{Bar}^1_\unif(\coboundary) \cap [0,m) = \mathrm{Bar}^1_\unif(\coboundary') \cap [0,m)$ agree as multisets of intervals, and the truncated valuation persistence modules are isomorphic.
\end{proof}

\begin{corollary}[Nonexpansiveness]
\label{cor:nonexpansive}
If $\coboundary \equiv \coboundary' \pmod{\unif^m}$, then $d_k(\coboundary) = d_k(\coboundary')$ for every $k < m$. Equivalently, the truncated barcode map $\coboundary \mapsto \mathrm{Bar}^1_\unif(\coboundary) \cap [0,m)$ is locally constant on the $\unif^m$-adic neighborhood of $\coboundary$.
\end{corollary}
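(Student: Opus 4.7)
The plan is to derive this corollary as a direct repackaging of Theorem~C, with no new work beyond interpreting what an ``$\unif^m$-adic neighborhood'' means on the coboundary space.

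First I would unpack the topology. The coboundary space $\Hom_\Ring(C^0,C^1)$ is a finite-rank free $\Ring$-module, so its $\unif^m$-adic neighborhood of $\coboundary$ is by definition the coset $\coboundary + \unif^m\Hom_\Ring(C^0,C^1)$, which is exactly the set of operators $\coboundary'$ with $\coboundary \equiv \coboundary' \pmod{\unif^m}$. With this identification, the first assertion $d_k(\coboundary)=d_k(\coboundary')$ for $k<m$ is immediate from Lemma~\ref{lem:digit-locality}: since $k<m$ implies $k+1\le m$, the digit map $\Digit{k}$ depends only on the reduction modulo $\unif^{k+1}$, and both $\coboundary$ and $\coboundary'$ agree after that reduction. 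Taking $\mathbb{F}$-dimensions of the common image then yields the stated equality of digit ranks.

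For the equivalent formulation, I would invoke the Digit-SNF Dictionary (Theorem~A). Equality of $d_\ell$ for every $\ell\le m-1$ transfers through the first-difference formula
\[
\#\{j:a_j(\coboundary)=\ell\} = d_\ell(\coboundary)-d_{\ell-1}(\coboundary) = d_\ell(\coboundary')-d_{\ell-1}(\coboundary') = \#\{j:a_j(\coboundary')=\ell\}
\]
into equality of the multiplicities of every finite bar length $\ell < m$. That matching of multiplicities is precisely the statement $\mathrm{Bar}^1_\unif(\coboundary)\cap[0,m) = \mathrm{Bar}^1_\unif(\coboundary')\cap[0,m)$ as multisets, which says that the truncated barcode map takes the single value $\mathrm{Bar}^1_\unif(\coboundary)\cap[0,m)$ on the entire $\unif^m$-adic neighborhood. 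In particular, it is locally constant at $\coboundary$.

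There is essentially no obstacle: all the real content has been packaged into Lemma~\ref{lem:digit-locality} and Theorem~A, and Theorem~C has already carried out the same argument verbatim. The only cosmetic subtlety worth flagging is the terminology ``nonexpansive,'' which is the ultrametric analogue of $1$-Lipschitz bottleneck stability in classical persistent homology: on an ultrametric base, a map into a discrete-valued invariant that depends only on truncations automatically becomes locally constant on each closed ball of the cutoff radius, so ``$1$-Lipschitz'' and ``locally constant within the ball'' coincide. This is the precise $\unif$-adic form of stability enjoyed by arithmetic barcodes.
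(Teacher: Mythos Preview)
Your proposal is correct and matches the paper's approach: the corollary is stated without a separate proof because it is an immediate restatement of what Theorem~C already establishes, and you correctly identify that all the content lives in Lemma~\ref{lem:digit-locality} and Theorem~A, with Theorem~C having already carried out the argument verbatim.
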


The most commonly applicable case occurs when the perturbation precision exceeds all torsion scales, yielding exact barcode preservation.

\begin{corollary}[Threshold Stability]
\label{cor:threshold-stability}
If $\coboundary \equiv \coboundary' \pmod{\unif^m}$ with $m > \max_j a_j(\coboundary)$, then the entire arithmetic barcode is preserved:
\[
\mathrm{Bar}^1_\unif(\coboundary) = \mathrm{Bar}^1_\unif(\coboundary'),
\]
hence the Smith normal form exponent multisets coincide and $H^1(G;\sheaf{F})_{\tors}(\coboundary) \cong H^1(G;\sheaf{F})_{\tors}(\coboundary')$ as $\Ring$-modules.
\end{corollary}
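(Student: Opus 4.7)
The plan is to obtain this as a direct consequence of Theorem~C, leveraging the hypothesis that $m$ exceeds every bar length of $\coboundary$. Since $m > \max_j a_j(\coboundary)$, every finite bar of $\mathrm{Bar}^1_\unif(\coboundary)$ lies in $[0, m)$, so the finite-bar portion of $\coboundary$'s barcode coincides with its truncation to $[0,m)$. Theorem~C then identifies this truncated barcode with $\mathrm{Bar}^1_\unif(\coboundary') \cap [0, m)$, so every finite bar of $\coboundary$ appears as a finite bar of $\coboundary'$ with matching multiplicity.

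To upgrade truncated equality to full equality, I would show that $\coboundary'$ has no additional finite bars of length $\geq m$ and the same number of infinite bars as $\coboundary$. Both reduce to the rank identity $\rank_\Ring(\coboundary') = \rank_\Ring(\coboundary)$, since the infinite bar count is $n_1 - \rank_\Ring(\coboundary)$, and Theorem~A's stabilization $d_{m-1}(\coboundary) = \#\{j : a_j(\coboundary) \geq 1\}$ (valid once $m > \max_j a_j$) combined with Theorem~C's equality $d_{m-1}(\coboundary') = d_{m-1}(\coboundary)$ already pins the torsion count of $\coboundary'$ below the threshold to the full torsion count of $\coboundary$. The contribution $\#\{j : a_j = 0\} = \rank_{\mathbb{F}}(\bar\coboundary)$ is trivially preserved because $\coboundary \equiv \coboundary' \pmod \unif$. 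The torsion isomorphism $H^1(G;\sheaf{F})_{\tors}(\coboundary) \cong H^1(G;\sheaf{F})_{\tors}(\coboundary')$ then follows from the structure theorem once the torsion exponent multisets coincide.

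The main obstacle is the rank comparison. Rank over a DVR is only lower semi-continuous, so $\rank_\Ring(\coboundary') \geq \rank_\Ring(\coboundary)$ is automatic, but the reverse bound requires the full strength of the congruence modulo $\unif^m$ together with the threshold hypothesis. The natural route is through determinantal ideals: the $r$-th ideal $D_r(\coboundary)$ has valuation $\sum_j a_j$, and one tracks how its reduction modulo $\unif^m$ relates to $D_r(\coboundary')$ and the higher ideals $D_{r+k}(\coboundary')$, using that any new nonzero Smith entry of $\coboundary'$ would force a minor of $\coboundary'$ that is both congruent to a vanishing minor of $\coboundary$ modulo $\unif^m$ and nonzero, hence of valuation $\geq m$. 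Ruling this out pins $\rank_\Ring(\coboundary') = \rank_\Ring(\coboundary)$, after which the finite bars (from the truncated agreement) and infinite bars (from the rank agreement) combine to give the claimed equality of full barcodes.
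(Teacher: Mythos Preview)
You correctly identify the crucial issue that the paper's own brief proof elides: Theorem~C only controls bars of length $< m$, so one must separately show that $\coboundary'$ acquires no new bars of length $\geq m$ and has the same number of infinite bars. Your reduction of both questions to the single identity $\rank_\Ring(\coboundary') = \rank_\Ring(\coboundary)$ is exactly right, and your counting argument---matching the zero exponents via $\rank_{\mathbb{F}}(\bar\coboundary) = \rank_{\mathbb{F}}(\bar\coboundary')$ and the exponents in $[1,m-1]$ via the digit ranks $d_k$---cleanly shows that rank equality would suffice.

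However, your final step does not go through, and in fact cannot. You observe that any $(r{+}1)\times(r{+}1)$ minor of $\coboundary'$ is congruent modulo $\unif^m$ to the corresponding vanishing minor of $\coboundary$, hence has valuation $\geq m$---but this only shows that any \emph{new} Smith exponent satisfies $a_j(\coboundary') \geq m$; it does not rule such exponents out, and you give no mechanism for doing so. Indeed the rank can strictly increase: take the constant sheaf on the triangle $C_3$, so $\coboundary$ has rank $2$ with exponents $(0,0)$ and $\max_j a_j(\coboundary) = 0$, and perturb one edge unit from $1$ to $1+\unif^m$. The resulting $\coboundary'$ satisfies $\coboundary' \equiv \coboundary \pmod{\unif^m}$ for any $m\geq 1$ yet has rank $3$ with exponents $(0,0,m)$, giving $\mathrm{Bar}^1_\unif(\coboundary) = \{[0,\infty)\}$ versus $\mathrm{Bar}^1_\unif(\coboundary') = \{[0,m)\}$ and $H^1_{\tors} = 0$ versus $\Ring/\unif^m$. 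The corollary as stated is therefore false without an additional hypothesis---for instance $m > \max_j a_j(\coboundary')$ symmetrically, or $\rank_\Ring(\coboundary) = \min(n_0,n_1)$. The paper's proof shares this gap: its assertion that ``there are no bars of length $\ge m$'' is justified only for $\coboundary$, not for $\coboundary'$.
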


\begin{proof}
Apply Theorem~C with the observation that if every bar length $\ell$ satisfies $\ell < m$, then the truncated barcode $\mathrm{Bar}^1_\unif \cap [0,m)$ equals the full barcode. Since all multiplicities $\#\{j : a_j = \ell\}$ are preserved for $\ell < m$ and there are no bars of length $\ge m$, the complete multiset of exponents is determined. The isomorphism of torsion submodules follows from the structure theorem for finitely generated modules over a principal ideal domain.
\end{proof}

\begin{remark}[Optimality]
\label{rem:stability-optimal}
Theorem~C is best possible without additional assumptions. If $m \le \max_j a_j(\coboundary)$, then bars of length $\ge m$ can change arbitrarily under $\unif^m$-level perturbations. For a minimal example, consider the rank-1 case over $\Ring = \Zp$: the coboundaries $\coboundary = [p^m]$ and $\coboundary' = [0]$ differ by $p^m$ and have barcodes $\{[0,m)\}$ and $\emptyset$ respectively. Theorem~C correctly guarantees agreement of bar lengths shorter than $m$ (of which there are none for $\coboundary'$, and one of length exactly $m$ for $\coboundary$), but makes no claim about features at the threshold.
\end{remark}

\begin{proposition}[Determinantal Truncation Stability]
\label{prop:determinantal-truncation}
For $r \ge 1$, let 
\[
s_r(\coboundary) := \min\{\valpi(\det M) : M \text{ is an } r \times r \text{ minor of } \coboundary\}
\]
so that classically $s_r = a_1 + \cdots + a_r$ is the sum of the first $r$ Smith exponents (the $r$-th determinantal ideal). If $\coboundary \equiv \coboundary' \pmod{\unif^m}$, then for every $r \ge 1$,
\[
\min\{s_r(\coboundary), m\} = \min\{s_r(\coboundary'), m\}.
\]
In particular, if $s_r(\coboundary) < m$ then $s_r(\coboundary') = s_r(\coboundary)$, and conversely.
\end{proposition}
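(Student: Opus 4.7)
The plan is to reduce the claim to a single algebraic fact about determinants under $\unif^m$-congruence, then apply the strict ultrametric inequality together with a symmetry argument. First I would recall the classical determinantal characterization of the Smith exponents: the $r$-th determinantal ideal $I_r(\coboundary) \subseteq \Ring$, generated by all $r \times r$ subdeterminants of $\coboundary$, is invariant under unimodular equivalence and satisfies $I_r(\coboundary) = (\unif^{a_1 + \cdots + a_r})$ by Theorem~\ref{thm:snf}. Thus $s_r(\coboundary) = \valpi(I_r(\coboundary)) = a_1 + \cdots + a_r$, with the convention $s_r = \infty$ when $\coboundary$ has $\Ring$-rank strictly less than $r$.

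The key algebraic step I would isolate is the following determinant congruence: if $M$ and $M'$ are the $r \times r$ submatrices of $\coboundary$ and $\coboundary'$ indexed by the same rows and columns, and $\coboundary \equiv \coboundary' \pmod{\unif^m}$, then $\det M \equiv \det M' \pmod{\unif^m}$. This follows directly from multilinearity of the determinant: writing $M' = M + \unif^m N$ for some $N \in \Ring^{r \times r}$ and expanding column by column exhibits $\det M' - \det M$ as a sum of determinants each having at least one column in $\unif^m \Ring$, hence lying in $\unif^m \Ring$. No further structural information about $\coboundary$ is needed.

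With this congruence in hand, the argument divides into two cases. If $s_r(\coboundary) < m$, choose a submatrix $M_0$ attaining the minimum with $\valpi(\det M_0) = s_r(\coboundary)$. Since $\valpi(\det M_0 - \det M_0') \ge m > \valpi(\det M_0)$, the strict ultrametric inequality forces $\valpi(\det M_0') = \valpi(\det M_0) = s_r(\coboundary)$, so $s_r(\coboundary') \le s_r(\coboundary)$; swapping the roles of $\coboundary$ and $\coboundary'$ gives the reverse inequality, and both truncations at $m$ equal the common value $s_r(\coboundary)$. If instead $s_r(\coboundary) \ge m$, every $r \times r$ subdeterminant of $\coboundary$ lies in $\unif^m \Ring$, hence so does every corresponding subdeterminant of $\coboundary'$, yielding $s_r(\coboundary') \ge m$ and collapsing both sides of the claimed equality to $m$.

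The main obstacle is essentially nonexistent: the whole proposition rests on the single determinant-congruence observation, after which the ultrametric-plus-symmetry step is routine. Edge cases such as $s_r(\coboundary) = \infty$ (when $\coboundary$ has rank $< r$) fall into the second case automatically, since $\valpi(0) = \infty \ge m$ requires no special handling. I view the result as a matrix-level refinement of Theorem~C: where Theorem~C tracks the multiplicity sequence $(d_k)$ up to level $m$, this proposition tracks the partial-sum sequence $a_1 + \cdots + a_r$ up to level $m$, and both are consequences of the same locality-of-$\unif^m$-reduction principle.
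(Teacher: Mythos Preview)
Your proposal is correct and follows essentially the same route as the paper's proof: pick a minimizing $r\times r$ minor, use the strict ultrametric inequality to pin down the valuation of the corresponding minor of $\coboundary'$, then invoke symmetry; the $s_r \ge m$ case collapses both sides to $m$. You are in fact more careful than the paper at one point: the paper asserts $M' \equiv M \pmod{\unif^m}$ and immediately applies the ultrametric to the determinants without remarking that entrywise congruence mod $\unif^m$ implies determinant congruence mod $\unif^m$, whereas you isolate and justify this via multilinearity.
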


\begin{proof}
Fix $r$ and choose an $r \times r$ minor $M$ of $\coboundary$ with $\valpi(M) = s_r(\coboundary)$. The corresponding minor $M'$ of $\coboundary'$ satisfies $M' \equiv M \pmod{\unif^m}$, hence $\valpi(M') \ge \min\{\valpi(M), \valpi(M'-M)\}$ by the non-Archimedean property. If $\valpi(M) < m$ then $\valpi(M'-M) \ge m$, so the valuations differ and the ultrametric inequality is sharp: $\valpi(M') = \valpi(M)$. Thus $s_r(\coboundary') \le \valpi(M') = s_r(\coboundary)$. Symmetry in $\coboundary, \coboundary'$ gives equality. If both $s_r(\coboundary)$ and $s_r(\coboundary')$ are $\ge m$, the displayed equality of minima is tautological.
\end{proof}

\begin{remark}
Proposition~\ref{prop:determinantal-truncation} controls partial sums of exponents below the threshold via determinantal ideals, providing a classical commutative-algebraic perspective on truncation stability. However, Theorem~C is strictly sharper for recovering individual bar lengths, since the digit-SNF dictionary extracts the complete exponent multiset from digit ranks rather than just partial sums. In parametric families $\coboundary(\theta)$ where for each $r$ a single minor achieves valuation $s_r$ while all others have strictly larger valuation, the proposition yields local constancy of $s_r$ and hence of the barcode under generic non-degeneracy conditions.
\end{remark}

\begin{remark}[Valuated matroids and tropical geometry]
\label{rem:tropical-viewpoint}
The determinantal valuations appearing in Proposition~\ref{prop:determinantal-truncation} 
have a natural interpretation via valuated matroids \cite{DressWenzel1992_ValuatedMatroids}. 
For a matrix $\coboundary$ with entries in a discrete valuation ring, the valuations 
$w_B := \valpi(\det \coboundary_B)$ of all $r \times r$ minors $\coboundary_B$ define 
a valuated matroid of rank $r$ on the column set. The minima $s_r = \min_B w_B$ 
appearing in our stability result are precisely the minimum basis weights in this 
valuated matroid. Under $\unif^m$-level perturbations, if the perturbation valuation 
exceeds $m$ while at least one basis weight remains below $m$, the minimizing basis 
(and hence $s_r$) is unchanged -- this is the tropical linear space picture of 
\cite{SpeyerSturmfels2004_TropicalGrassmannian,Speyer2008_TropicalLinearSpaces}. 
The collection of all minor valuations defines a point in the tropical Grassmannian, 
and stability under $\unif^m$-perturbations corresponds to remaining in the same 
cell of the associated tropical linear space \cite{MaclaganSturmfels2015_TropicalGeometry}. 
This geometric perspective explains why our stability is governed by determinantal 
ideals and provides a combinatorial framework for understanding barcode persistence 
under perturbation.
\end{remark}
\subsection{Computational Implications}
\label{ssec:computational-implications}

The stability results provide both theoretical guarantees and practical algorithmic guidance for computing arithmetic barcodes from finite-precision data.

\textbf{Adaptive precision algorithms.}
Threshold stability enables iterative refinement strategies. Given edge data or coboundary entries measured to precision $m$ bits (in the $p$-adic case, working modulo $p^m$), compute the digit ranks $d_k$ for $k = 1, \ldots, m-1$ via linear algebra over the residue field $\mathbb{F} = \Ring/\unif$. This yields an initial barcode estimate with all bars of length $< m$ guaranteed correct by Theorem~C. If the longest detected bar has length $\ell_{\max} < m - \delta$ for some safety margin $\delta$, the computation is complete. If bars approach or reach the precision limit, additional measurements at higher precision are required, but only for degrees of freedom that participate in long-bar obstructions.

By Theorem~B, the saturation projector $\Pi_{\sat}$ identifies which components of the edge-cochain space contribute to torsion. In heterogeneous settings where measurement costs vary across edges, this suggests prioritized refinement: allocate higher precision to edges with large coefficients in $\im(\Pi_{\sat})$, while edges lying primarily in the free complement $\im(\Pi_{\free}) \cong H^1_{\free}$ tolerate coarser quantization. Row coefficients (evaluated via $\valpi$ of entries) of $\Pi_{\sat}$ provide per-edge leverage scores indicating sensitivity to precision.

\textbf{Dual computation and error checking.}
The digit-SNF dictionary provides two independent routes to the same invariant. The \emph{algebraic route} computes Smith normal form of $\coboundary$ directly over $\Ring$ via elementary row and column operations or Hensel lifting \cite{Kannan1979}, reading off exponents $\{a_j\}$ from the diagonal. Efficient algorithms for computing Smith normal forms of integer matrices are well-developed: Kannan and Bachem's polynomial-time algorithm \cite{Kannan1979} and near-optimal methods of Storjohann \cite{Storjohann1996_NearOptimalSNF} (with sparse refinements) pair naturally with $\unif$-adic Hensel refinement in our setting.

For sparse graphs or structured matrices, one approach may be significantly faster. For numerical robustness, computing both and verifying agreement provides error detection: if $d_k \neq \#\{j : 1 \le a_j \le k\}$ for some $k$, either the Smith form was computed incorrectly or digit map dimensions were miscounted. This redundancy is analogous to checksum validation in numerical linear algebra.

\textbf{Precision requirements for applications.}
In distributed systems where agents communicate quantized data over networks (Section~\ref{sec:application}), Corollary~\ref{cor:threshold-stability} provides a topology-dependent lower bound on communication precision. If the network graph $G$ with sheaf data $\sheaf{F}$ has arithmetic barcode with longest bar $\ell_{\max}$, then $b \ge \ell_{\max}$ bits per message suffice to detect all cycle inconsistencies, and $b < \ell_{\max}$ bits are insufficient. The barcode thus translates topological obstruction (cycles in the graph with nontrivial holonomy) into information-theoretic cost (bits required for global consistency).

For time-varying networks or sheaves evolving under dynamics, the barcode $\mathrm{Bar}^1_\unif(t)$ becomes time-dependent. Whenever $\coboundary(t) \equiv \coboundary(t') \pmod{\unif^m}$, the truncated barcode $\mathrm{Bar}^1_\unif \cap [0,m)$ is identical by Corollary~\ref{cor:nonexpansive}; it is locally constant on $\unif^m$-adic neighborhoods, with changes occurring only when the perturbation exits the $\unif^m$ ball corresponding to the next bar-length threshold. This enables tracking of persistent obstructions through parameter space.

\textbf{Stability in multiparameter settings.}
When combining geometric filtrations (varying graph topology or sheaf data by scale parameter $\tau$) with the algebraic precision filtration (varying $k$), one obtains two-parameter persistence indexed by $(\tau, k) \in \mathbb{R} \times \mathbb{N}$. Theorem~C provides vertical stability: fixing $\tau$, the barcode in the $k$-direction is robust to $\unif^m$-perturbations of the sheaf. Horizontal stability (varying $\tau$ at fixed $k$) follows from classical persistence stability for the geometric parameter. The interaction between axes -- how geometric features at different scales $\tau$ carry different precision signatures $k$ -- remains an open question for future investigation.

\subsection{Outlook: Multiparameter Persistence and Higher-Rank Sheaves}
\label{ssec:barcode-outlook}

The valuation filtration provides one axis of persistence; many applications involve simultaneous geometric filtrations. For a family $\{G_\tau\}_{\tau \in \mathbb{R}}$ of graphs (or a fixed graph with sheaf data varying by scale $\tau$), we obtain a two-parameter family $H^i(G_\tau; \sheaf{F})/\unif^k$ indexed by $(\tau, k) \in \mathbb{R} \times \mathbb{N}$. The associated rank functions
\[
S(\tau, k) := \dim_{\mathbb{F}} \gr^k H^1(G_\tau;\sheaf{F}), \qquad R(\tau, k) := \dim_{\mathbb{F}} \im\bigl(\Digit{k}(\tau)\bigr)
\]
define surfaces that are monotone in both parameters: nondecreasing in $\tau$ (features appear as scale increases) and nondecreasing in $k$ for the digit ranks $R(\tau,k)$ (more digits switch on as $k$ increases). These rank invariants summarize two-parameter persistence without requiring the full machinery of multiparameter persistence modules, which remains an active area of research.

For sheaves of rank greater than one, edge restrictions are matrix-valued. Holonomy around a cycle $C$ becomes a matrix product $H(C) = \prod_{e \in C} M_e \in \mathrm{GL}_n(\Ring)$. The obstruction to global compatibility is measured by $H(C) - I$, and the torsion structure arises from the Smith normal form of this difference. This \emph{matrix holonomy} perspective generalizes cycle barcodes to higher-rank sheaves and connects to representation theory of fundamental groups.

Applications naturally arise wherever ultrametric precision structure governs data. In sensor network synchronization, nodes measure relative phases or positions with limited precision; the arithmetic barcode measures consistency across the network and identifies cycles where precision bottlenecks occur. In financial networks tracking exchange rates, inconsistencies around currency cycles (triangular arbitrage) can be quantified via holonomy, with bar length measuring the precision to which no-arbitrage conditions hold. In geodetic networks or photometric calibration systems, measurements between stations accumulate errors; the barcode identifies which loops exhibit systematic drift and at what precision level. The framework extends to any setting where local measurements aggregate around cycles and precision limits detectability of global obstructions.

The foundations established here -- digit sequences, the Digit-SNF Dictionary, saturation splitting, and the barcode classification -- provide the algebraic infrastructure for arithmetic persistence theory. Full development of multiparameter invariants, general stability theorems (e.g., bottleneck distance bounds), algorithmic implementations, and detailed case studies will appear in subsequent work. The key insight is that torsion, often viewed as a computational nuisance in integral topology, becomes the central signal when data naturally stratifies by precision.

\section{Application: Distributed Consensus with Quantized Communication}
\label{sec:application}

Arithmetic barcodes have natural applications in distributed systems where agents communicate over networks using finite-precision messages. We outline how the p-adic cohomology framework provides information-theoretic bounds on communication requirements for achieving global consensus when local measurements are quantized.

\subsection{Scalar Consensus: Clock Synchronization}
\label{ssec:scalar-consensus}

Throughout this section, we work over the ring $\Ring = \mathbb{Z}_2$ of 2-adic integers with uniformizer $\unif = 2$, so that $\valpi(x) = \mathrm{val}_2(x)$ denotes the 2-adic valuation. Consider a network of $n$ autonomous agents positioned at vertices of a connected graph $G = (V,E)$, where each agent operates with a local clock running at rate $r_i \in \mathbb{R}_{>0}$. Agents can communicate along edges, measuring relative clock rates through direct comparison. For edge $e = \{i,j\}$, the measured rate ratio is $w_e := r_i/r_j \in \mathbb{Q}_{>0}$. In an ideal synchronized system, propagating these ratios around any cycle $C$ would yield the identity: $\prod_{e \in C} w_e = 1$, reflecting global consistency. In practice, measurement errors, calibration drift, or channel asymmetries cause the \emph{holonomy} $h(C) := \prod_{e \in C} w_e$ to deviate from unity.

The fundamental constraint is that all inter-agent communication uses finite-precision fixed-point arithmetic. Each rate ratio $w_e$ must be represented using $b$ bits, and propagated values accumulate quantization error. The question is: how many bits $b$ suffice to detect cycle inconsistencies?

We model this using a rank-1 unit sheaf over the ring $\mathbb{Z}_2$ of 2-adic integers. The choice $p=2$ is natural because $b$-bit fixed-point arithmetic corresponds to working modulo $2^b$, and the 2-adic valuation $\valpi(x)$ measures precision: if $\valpi(x) = k$, then $x$ is divisible by $2^k$ but not by $2^{k+1}$.

Any positive rational $w_e \in \mathbb{Q}_{>0}$ can be written uniquely as $w_e = 2^{\kappa_e} u_e$ where $\kappa_e \in \mathbb{Z}$ and $u_e$ is a ratio of odd integers, hence a unit in $\mathbb{Z}_2$. To construct the network sheaf, we work with the unit parts $\{u_e\}$ and absorb the powers of 2 into vertex scalings. Specifically, fix a spanning tree $T \subseteq G$ and choose vertex scalings $\gamma_v = 2^{s_v}$ so that the gauge-transformed weights
\[
w'_e := \gamma_{h(e)}^{-1} w_e \gamma_{t(e)} = 2^{s_{t(e)} - s_{h(e)} + \kappa_e} u_e
\]
satisfy $w'_e \in \mathbb{Z}_2^\times$ for all $e \in T$. This is always possible by choosing the exponents $\{s_v\}$ to solve $s_{t(e)} - s_{h(e)} = -\kappa_e$ for $e \in T$, which has a unique solution up to adding a global constant. For edges not in $T$, if $\sum_{e \in C} \kappa_e = 0$ for all fundamental cycles $C$, then all transformed weights are units; otherwise some cycles contribute powers of 2 to their holonomy.

\begin{definition}
\label{def:clock-sheaf}
The \emph{clock synchronization sheaf} $\sheaf{F}$ on graph $G$ with gauge-normalized unit weights $\{u'_e \in \mathbb{Z}_2^\times\}_{e \in E}$ assigns:
\begin{itemize}
\item Vertex stalks: $\stalk{\sheaf{F}}{v} = \mathbb{Z}_2$ for all $v \in V$
\item Edge stalks: $\stalk{\sheaf{F}}{e} = \mathbb{Z}_2$ for all $e \in E$  
\item Restriction maps: For oriented edge $e: u \to v$ with unit weight $u'_e$,
\[
\res{\sheaf{F}}{u}{e}(x) = x, \qquad \res{\sheaf{F}}{v}{e}(x) = u'_e \cdot x
\]
\end{itemize}
\end{definition}

For a cycle $C$, the holonomy $h(C) = \prod_{e \in C} u'_e \in \mathbb{Z}_2^\times$ is now a 2-adic unit. We say the cycle is \emph{consistent through $k$ bits} if $h(C) \equiv 1 \pmod{2^k}$, meaning the holonomy agrees with the identity to $k$ bits of precision. The 2-adic valuation $a := \valpi(h(C) - 1)$ measures the maximum precision: $h(C)$ is consistent through $a$ bits but not through $a+1$ bits.

A cocycle $s \in H^0(G;\sheaf{F})$ represents a globally consistent assignment satisfying $u'_e s(u) = s(v)$ for all edges $e: u \to v$. By Theorem~\ref{thm:cycle-barcode}, if $G = C_n$ is a cycle graph with holonomy $h(C_n)$ and $\valpi(h(C_n)-1) = a$, then $H^1(C_n;\sheaf{F}) \cong \mathbb{Z}_2/2^a\mathbb{Z}_2$ with arithmetic barcode consisting of a single bar of length $a$. For a general graph with cycle basis $\{C_1, \ldots, C_\beta\}$, the torsion is controlled by the unit holonomies $\{h(C_i)\}$: their valuations bound the bar lengths; moreover, under a block decomposition (e.g., an edge-disjoint fundamental cycle set) the torsion splits as a direct sum with cyclewise exponents $\valpi(h(C_i)-1)$.

\begin{proposition}[Communication Precision for Cycle Detection]
\label{prop:scalar-precision}
Let $G$ be a connected graph with cycle basis $\{C_1, \ldots, C_\beta\}$, and let $\sheaf{F}$ be the clock synchronization sheaf with unit holonomies $h(C_i) \in \mathbb{Z}_2^\times$. Set $a_i := \valpi(h(C_i) - 1)$ for each cycle. Suppose each edge communicates a single $b$-bit integer representing its unit weight $u'_e$ modulo $2^b$, and all cycle products are computed modulo $2^b$.

Then $b \geq \max_i a_i$ is necessary and sufficient for detecting all cycle inconsistencies: every cycle $C_i$ with $h(C_i) \not\equiv 1 \pmod{2^b}$ is detectable, and cycles with $h(C_i) \equiv 1 \pmod{2^b}$ cannot be distinguished from perfectly consistent cycles at this precision level.

Moreover, if $b \geq \max_i a_i$, the complete arithmetic barcode can be recovered by computing $\dim_{\mathbb{F}_2} \im(\Digit{k})$ for $k = 1, \ldots, b$ using the digit maps of Theorem~A.
\end{proposition}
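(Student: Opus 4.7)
The plan is to reduce the proposition to three ingredients already established in the paper: the cycle-holonomy computation of Theorem~\ref{thm:cycle-barcode}, the locality of digit maps (Lemma~\ref{lem:digit-locality} / Theorem~C), and the Digit-SNF Dictionary (Theorem~A). The engineering content of the proposition is extremely thin once these are in hand; the main work is bookkeeping the interface between $b$-bit fixed-point arithmetic and reduction modulo $2^b$.

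First I would establish the fundamental observation that at precision $b$, the observed cycle product around $C_i$ is exactly $h(C_i) \bmod 2^b$. This follows because the natural surjection $\mathbb{Z}_2 \twoheadrightarrow \mathbb{Z}_2/2^b\mathbb{Z}_2$ is a ring homomorphism, so $\prod_{e\in C_i}(u'_e \bmod 2^b) \equiv h(C_i) \pmod{2^b}$. Hence agents see the product as $1$ around $C_i$ if and only if $h(C_i) \equiv 1 \pmod{2^b}$, which by definition of the 2-adic valuation is equivalent to $a_i = \valpi(h(C_i)-1) \geq b$. This gives the detection dichotomy: $C_i$ is observably inconsistent at precision $b$ iff $a_i < b$.

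From this dichotomy, necessity and sufficiency follow directly. For sufficiency: if $b$ exceeds every finite $a_i$, then every genuinely inconsistent cycle (those with $a_i < \infty$) satisfies $a_i < b$ and is detected; cycles with $a_i = \infty$ (i.e.\ $h(C_i)=1$) are genuinely consistent and should indeed be indistinguishable from the identity. For necessity: if the threshold fails, some cycle has $a_i \geq b$, whence $h(C_i) \equiv 1 \pmod{2^b}$ is observationally indistinguishable from $h = 1$. I would take care here to reconcile the off-by-one between the ``bar length $a$'' convention of Theorem~\ref{thm:cycle-barcode} (where $\mathbb{Z}_2/2^a$ torsion yields a bar $[0,a)$) and the ``$b$ bits of precision'' convention, which is where the threshold $b \geq \max_i a_i$ in the statement is to be interpreted.

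For the barcode recovery claim, I would invoke Lemma~\ref{lem:digit-locality}: the digit connecting map $\Digit{k}$ depends only on $\coboundary \bmod 2^{k+1}$. Since $\coboundary$ is assembled from the unit weights $\{u'_e\}$ which are known to precision $b$, every digit map $\Digit{k}$ with $k < b$ is computable from the available data by pure linear algebra over $\mathbb{F}_2$. Applying Theorem~A, the multiplicity of bars of each length $\ell$ in the range $1 \leq \ell < b$ is recovered as the first difference $\dim_{\mathbb{F}_2}\im(\Digit{\ell}) - \dim_{\mathbb{F}_2}\im(\Digit{\ell-1})$. Once $b$ exceeds $\max_i a_i$, no bars are lost, and the full arithmetic barcode is reconstructed. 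The main obstacle is genuinely just the off-by-one bookkeeping between bar-length and bit-precision conventions; the mathematical substance is entirely absorbed by the results already proven.
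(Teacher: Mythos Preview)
Your proposal is correct and follows essentially the same route as the paper: both invoke Theorem~\ref{thm:cycle-barcode} to identify $a_i = \valpi(h(C_i)-1)$ as the bar length, then argue necessity and sufficiency via the elementary valuation observation that $h(C_i) \equiv 1 \pmod{2^b}$ iff $b \le a_i$, and finally appeal to Theorem~A for barcode recovery. Your version is slightly more careful than the paper's in two respects---you explicitly justify the ring-homomorphism step for computing cycle products modulo $2^b$, and you separately invoke Lemma~\ref{lem:digit-locality} to confirm that $\Digit{k}$ for $k<b$ is computable from $b$-bit data---but these are refinements of the same argument, not a different approach.
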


\begin{proof}
After the tree gauge normalization, each fundamental cycle $C_i$ contributes to cohomology through its holonomy $h(C_i) \in \mathbb{Z}_2^\times$. By the rank-1 cycle theorem (Theorem~\ref{thm:cycle-barcode}), the torsion from cycle $C_i$ is $\mathbb{Z}_2/2^{a_i}\mathbb{Z}_2$ where $a_i = \valpi(h(C_i)-1)$. The Digit-SNF Dictionary (Theorem~A) shows that to detect a bar of length $a_i$ requires computing cohomology modulo $2^{a_i}$. Working modulo $2^b$ reveals all bars of length at most $b$. Therefore $b \geq \max_i a_i$ is both necessary (to detect the longest bar) and sufficient (to detect all bars).

For necessity, if $b < a_i$ for some cycle $C_i$, then $h(C_i) \equiv 1 \pmod{2^b}$ since $h(C_i) - 1 \in 2^{a_i}\mathbb{Z}_2 \subseteq 2^b\mathbb{Z}_2$, making the inconsistency invisible at $b$-bit precision. For sufficiency, if $b \geq a_i$, then $h(C_i) \not\equiv 1 \pmod{2^b}$ (since $h(C_i) - 1 = 2^{a_i} u$ with $u \in \mathbb{Z}_2^\times$), allowing detection.
\end{proof}

\begin{remark}
The proposition addresses detection at a fixed communication precision $b$. Relating this to real-valued $\epsilon$-accuracy requires additional assumptions about dynamic range and the mapping from quantized integers to real rates, which depend on the specific consensus protocol. The cohomological bound provides a topology-dependent lower limit on communication precision that any protocol must respect.
\end{remark}

The saturation splitting of Theorem~B provides additional insight. The decomposition $C^1 = \sat(\im \coboundary) \oplus W$ separates edge measurements into those lying in $\sat(\im \coboundary)$, which propagate globally through the network and require high precision to resolve torsion obstructions, and those in the free complement $W \cong H^1_{\free}$, which represent degrees of freedom that do not participate in cycle inconsistencies. This suggests heterogeneous bit allocation: allocate more bits to components of the edge-cochain along $\im\Pi_{\sat}$ (e.g., via per-edge leverage scores derived from row norms of $\Pi_{\sat}$); components along the free complement $\im\Pi_{\free}\cong H^1_{\free}$ tolerate coarser quantization.

\subsection{Vector-Valued Consensus and Matrix Holonomy}
\label{ssec:vector-consensus}

The scalar framework extends to distributed computations over vector-valued data, where agents maintain state vectors $x_i \in \mathbb{R}^d$ and communicate linear combinations through quantized channels. This setting encompasses federated machine learning (gradient aggregation over parameter space), formation control in robotics (relative position estimation), distributed Kalman filtering (multi-sensor state fusion), and multi-agent coordination problems where local reference frames differ. Edge communications now involve $d \times d$ transformation matrices encoding coordinate changes, relative scalings, or linear compressions, and the resulting network sheaf has higher-rank stalks.

\begin{definition}
\label{def:vector-sheaf}
A \emph{vector consensus sheaf} $\sheaf{F}$ of rank $d$ over $\mathbb{Z}_2$ on graph $G$ assigns:
\begin{itemize}
\item Vertex stalks: $\stalk{\sheaf{F}}{v} = \mathbb{Z}_2^d$ for all $v \in V$
\item Edge stalks: $\stalk{\sheaf{F}}{e} = \mathbb{Z}_2^d$ for all $e \in E$
\item Restriction maps: For oriented edge $e: u \to v$, there are matrices $M_{u,e}, M_{v,e} \in \mathrm{GL}_d(\mathbb{Z}_2)$ giving
\[
\res{\sheaf{F}}{u}{e}(x) = M_{u,e} x, \qquad \res{\sheaf{F}}{v}{e}(x) = M_{v,e} x
\]
\end{itemize}
The \emph{relative transformation} along edge $e$ is $T_e := M_{v,e}^{-1} M_{u,e} \in \mathrm{GL}_d(\mathbb{Z}_2)$.
\end{definition}

For a cycle $C = (e_1, \ldots, e_k)$ with oriented edges, the \emph{matrix holonomy} is
\[
H(C) := T_{e_k} T_{e_{k-1}} \cdots T_{e_1} \in \mathrm{GL}_d(\mathbb{Z}_2).
\]
This measures the accumulated transformation when propagating a vector around the cycle. The obstruction to global consistency is the deviation $H(C) - I_d$ from the identity matrix. When $H(C) = I_d$ exactly, the cycle imposes no constraint; when $H(C) \approx I_d$ (close in the 2-adic metric), the cycle imposes weak constraints that become visible only at high precision.

Unlike the scalar case where $h(C) - 1 \in \mathbb{Z}_2$ is a single number with a single valuation, the matrix $H(C) - I_d \in M_d(\mathbb{Z}_2)$ encodes obstructions across multiple directions simultaneously. The Smith normal form over $\mathbb{Z}_2$ diagonalizes this matrix to reveal its torsion structure.

\begin{proposition}[Cycle contribution in rank $d$ under a block decomposition]
\label{prop:matrix-cycle}
Assume a tree gauge and an ordering of vertices/edges for which the coboundary matrix decomposes as a block direct sum over fundamental cycles (e.g., an edge-disjoint cycle basis). Then for each cycle $C$ the torsion contribution to $H^1(G;\sheaf{F})$ is $\mathrm{coker}\!\big(H(C)-I_d\big)$ as a $\mathbb{Z}_2$-module. Equivalently, if the Smith normal form of $H(C)-I_d$ has invariant factors $2^{a_1}, \ldots, 2^{a_r}$ (with $0\le a_1\le\cdots\le a_r$), then $C$ contributes bars of lengths $a_1, \ldots, a_r$.
\end{proposition}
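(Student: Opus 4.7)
The plan is to reduce the higher-rank cycle calculation to a single matrix-cokernel computation and then invoke Smith normal form, mirroring the scalar argument in Theorem~\ref{thm:cycle-barcode} one block at a time. By the block-decomposition hypothesis, the coboundary $\coboundary$ splits as a direct sum indexed by fundamental cycles, so $H^1(G;\sheaf{F})$ splits accordingly and it suffices to analyze a single cycle $C$ of length $k$ carrying rank-$d$ stalks. First I would apply the tree gauge: inside each cycle block, choose vertex isomorphisms absorbing all but one of the relative transformations $T_{e_i}$ into the identity, which is possible because the $T_{e_i}$ lie in $\mathrm{GL}_d(\mathbb{Z}_2)$ and the tree portion of $C$ is contractible. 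After this gauge, the block coboundary has the form $\coboundary_C : (\mathbb{Z}_2^d)^k \to (\mathbb{Z}_2^d)^k$ represented by the block-circulant-type matrix whose subdiagonal entries are $-I_d$, superdiagonals are $I_d$, and the single non-tree edge contributes a block $H(C)$ in the wrap-around position.

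Next I would run block-row and block-column operations over $\mathbb{Z}_2$ to reduce $\coboundary_C$ to block-diagonal form with $(k-1)$ identity blocks of size $d$ and one remaining block equal to $H(C) - I_d$ (up to sign and left/right multiplication by unimodular blocks, which do not affect invariant factors). This is the exact block analogue of the cofactor-expansion calculation yielding $1 - h(C_n)$ in the proof of Theorem~\ref{thm:cycle-barcode}: telescope the relations $x_{i+1} = x_i$ along the tree part, leaving the single closing relation $H(C) x_0 = x_0$, i.e., $(H(C) - I_d) x_0 = 0$. The resulting cokernel is therefore
\[
\mathrm{coker}(\coboundary_C) \;\cong\; \mathrm{coker}\bigl(H(C) - I_d : \mathbb{Z}_2^d \to \mathbb{Z}_2^d\bigr),
\]
since the $(k-1)$ identity blocks contribute trivially to cokernel and kernel.

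Applying Theorem~\ref{thm:snf} to the $d \times d$ matrix $H(C) - I_d$ produces unimodular $U,V \in \mathrm{GL}_d(\mathbb{Z}_2)$ with $U(H(C)-I_d)V = \mathrm{diag}(2^{a_1},\dots,2^{a_r},0,\dots,0)$, $0\le a_1\le\cdots\le a_r$, so that
\[
\mathrm{coker}(H(C)-I_d) \;\cong\; \mathbb{Z}_2^{\,d-r} \;\oplus\; \bigoplus_{j=1}^r \mathbb{Z}_2/2^{a_j}\mathbb{Z}_2,
\]
whose torsion part contributes bars of lengths exactly $a_1,\dots,a_r$ via Definition~\ref{def:valuation-barcode}. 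Entries with $a_j=0$ correspond to unit invariant factors and contribute no torsion, consistent with Remark~\ref{rem:torsion-exponents}. Summing the contributions over the edge-disjoint fundamental cycles, which are orthogonal by the block-decomposition assumption, assembles the full torsion part of $H^1(G;\sheaf{F})$ and its barcode.

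The main obstacle is the block reduction step: verifying that the sequence of block-row/column operations producing the diagonal $(I_d,\dots,I_d, H(C)-I_d)$ form uses only $\mathrm{GL}_d(\mathbb{Z}_2)$-admissible moves and that the left-multiplication ambiguity by units does not alter Smith invariants. This is a bookkeeping issue rather than a conceptual one, since the tree gauge already guarantees that every $T_{e_i}$ appearing during elimination is invertible over $\mathbb{Z}_2$; however, care is needed to ensure the surviving block is $H(C)-I_d$ with the correct orientation-ordered product, which requires matching the block elimination order to the cycle orientation used to define $H(C)$.
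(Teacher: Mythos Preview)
Your proposal is correct and follows essentially the same approach as the paper's proof: reduce via the tree gauge and block decomposition to a single cycle whose cokernel is $\mathrm{coker}(H(C)-I_d)$, then read off bar lengths from Smith normal form. The paper's proof is a two-sentence sketch asserting exactly this; your version supplies the explicit block-circulant form of $\coboundary_C$ and the block-elimination argument that the paper leaves implicit, so it is a faithful and more detailed elaboration rather than a different route.
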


\begin{proof}
In a tree gauge with a block decomposition, each fundamental cycle contributes a block whose 1–cochain relations are governed by $H(C)-I_d$. The torsion in the corresponding quotient is $\mathrm{coker}(H(C)-I_d)$, whose invariant factors give the bar lengths via the barcode classification.
\end{proof}

\begin{remark}
Without such a block decomposition, the global torsion is determined by the Smith normal form of the \emph{entire} coboundary; the matrices $H(C)-I_d$ still control bar lengths (they give sharp lower bounds and become exact after suitable basis choices), but contributions can couple across cycles.
\end{remark}

The Smith normal form exponents $\{a_j\}$ reveal anisotropic precision requirements: different directions in the $d$-dimensional space may require different numbers of bits to resolve inconsistencies. If all exponents are equal, $a_1 = \cdots = a_r$, the cycle is isotropically constrained. If they differ significantly, the cycle induces strong constraints in some principal directions (large $a_j$) and weak constraints in others (small $a_j$).

\begin{example}[Near-identity transformations; first-order regime]
\label{ex:near-identity}
Suppose a 3-cycle $C_3$ has edge transforms $T_{e_i} = I_d + 2^k A_i$ with $A_i \in M_d(\mathbb{Z}_2)$ and $k \ge 1$. Expanding the product $(I + 2^k A_1)(I + 2^k A_2)(I + 2^k A_3)$ modulo $2^{2k+1}$ gives
\[
H(C_3) - I_d \;=\; 2^k(A_1 + A_2 + A_3) \;+\; 2^{2k}(A_1 A_2 + A_2 A_3 + A_3 A_1) + O(2^{3k}).
\]
Set $A := A_1 + A_2 + A_3$ and factor: $H(C_3) - I_d = 2^k(A + 2^k B)$ where $B$ encodes the second-order terms. 

\textbf{Key observation:} If $A$ has Smith normal form $\mathrm{SNF}(A) = \mathrm{diag}(2^{\alpha_1}, \ldots, 2^{\alpha_s}, 1, \ldots, 1, 0, \ldots)$ with $0 \le \alpha_1 \le \cdots \le \alpha_s$ and $s$ nontrivial torsion factors, then for $k > \max_j \alpha_j$, the matrix $A + 2^k B$ has the same Smith normal form as $A$ over $\mathbb{Z}_2$. This follows because the $r \times r$ minors of $A + 2^k B$ satisfy $\det(M + 2^k N) \equiv \det(M) \pmod{2^{k+v}}$ where $v = \min\{\valpi(\det M'), M' \text{ any } r \times r \text{ minor of } A\}$, and when $k > \max_j \alpha_j$, the leading-order term dominates. Therefore
\[
\mathrm{SNF}(H(C_3) - I_d) = \mathrm{diag}(2^{k+\alpha_1}, \ldots, 2^{k+\alpha_s}, 2^k, \ldots, 2^k, 0, \ldots, 0),
\]
yielding $(d-s)$ bars of length $k$ and $s$ bars of lengths $k + \alpha_j$. This anisotropy reflects that different spatial directions have different sensitivity to the first-order perturbations $\{A_i\}$.
\end{example}

This matrix holonomy perspective applies naturally to several application domains:
\begin{itemize}
\item \textbf{Federated Learning.} Each agent $i$ computes a local gradient $g_i \in \mathbb{R}^d$ where $d$ is the model dimension. Agents communicate compressed or preconditioned gradients $M_{ij} g_i$ to neighbors, where $M_{ij} \in \mathbb{R}^{d \times d}$ encodes sketching operators, adaptive learning rates, or heterogeneous feature normalizations. After quantizing entries to $b$ bits and lifting to $\mathbb{Z}_2$-valued matrices, the holonomy around cycles of workers measures accumulated drift. Parameters with large Smith exponents in $H(C) - I$ are sensitive to quantization and require higher bit budgets, while those with small exponents tolerate coarser approximation.

\item \textbf{Formation Control.} Robots maintaining relative positions encode transformations $T_e \in \mathrm{SE}(d)$ (rigid motions) or $\mathrm{GL}_d(\mathbb{R})$ (affine transformations) between local reference frames. Discretizing to $\mathbb{Z}_2^d$ and computing holonomy around triangles or other cycles reveals orientation drift or scale inconsistencies. Bars of different lengths indicate that some spatial directions (e.g., along vs. perpendicular to the formation axis) have different precision requirements.

\item \textbf{Distributed Kalman Filtering.} Sensors fusing estimates of a state vector $x \in \mathbb{R}^d$ communicate information-weighted updates $\Sigma_i^{-1} x_i$ where $\Sigma_i$ is the local covariance matrix. The matrices $\Sigma_j^{-1} \Sigma_i$ along edges give relative weightings, and their 2-adic Smith exponents determine which state components need high-precision communication for optimal fusion.
\end{itemize}

In each case, the arithmetic barcode provides a topology-dependent lower bound on the bit complexity of achieving global consensus. Graphs with many independent cycles (large first Betti number $\beta_1$) accumulate more obstructions, while graphs with trivial cohomology (trees) have empty barcodes and achieve perfect consensus regardless of quantization.

\subsection{Computational Approach}
\label{ssec:computation-consensus}

For networks of modest size, the arithmetic barcode is computable in practice. Given the graph topology and measured transformation data (unit ratios $\{u'_e\}$ for scalars, matrices $\{T_e\}$ for vectors), one constructs the coboundary operator $\coboundary: C^0(G;\sheaf{F}) \to C^1(G;\sheaf{F})$ as an $n_1 \times n_0$ matrix over $\mathbb{Z}_2$ (or an $n_1 d \times n_0 d$ matrix for rank-$d$ sheaves), where $n_0 = |V|$ and $n_1 = |E|$. The Smith normal form over $\mathbb{Z}_2$ yields the invariant factors $\{2^{a_j}\}$ directly, giving the bar lengths.

For 2-adic computations, one works with rational or integer matrix entries and applies Hensel lifting: compute the Smith normal form modulo $2, 4, 8, \ldots$ iteratively until the exponents stabilize. Since the barcode is finite -- all bars have length bounded by the maximum torsion in $H^1(G;\sheaf{F})$ -- this stabilization occurs after finitely many doublings. For sparse graphs, standard sparse matrix algorithms accelerate the computation.

Alternatively, the digit map approach of Theorem~A provides a hierarchical algorithm: for $k = 1, 2, \ldots$, compute cohomology $H^0(G;\sheaf{F}/2^k\sheaf{F})$ over the finite ring $\mathbb{Z}_2/2^k\mathbb{Z}_2$ and record the dimension $d_k := \dim_{\mathbb{F}_2} \im(\Digit{k})$. The sequence $(d_k)_{k \geq 1}$ is nondecreasing and stabilizes once $k$ exceeds all bar lengths. The number of bars of length exactly $\ell$ is $d_\ell - d_{\ell-1}$, reconstructing the barcode from successive approximations using only linear algebra over finite fields.

Threshold stability (Corollary~\ref{cor:threshold-stability}) is crucial for robustness: if edge weights or transformations are measured to precision $m$ bits and $m$ exceeds the maximum bar length, the computed barcode is guaranteed correct. This enables iterative refinement -- initial low-precision measurements yield a coarse estimate of maximum bar length, which then guides where additional measurement precision is needed. For heterogeneous networks where measurement costs vary across edges, this provides a principled resource allocation strategy: edges participating in cycles with long bars merit higher precision, while edges in cohomologically trivial regions tolerate coarser quantization.

The framework extends to dynamic settings where network topology or edge data evolve over time, inducing a time-varying barcode $\mathrm{Bar}^1_2(t)$. When combined with geometric filtrations -- for instance, thresholding edges by signal-to-noise ratio or communication range -- this yields two-parameter persistence modules indexed by both topological scale and algebraic precision. Detailed stability theory, convergence analysis for specific consensus algorithms, and experimental validation on real distributed systems are directions for future work.
\section{Conclusion}
\label{sec:conclusion}

Our approach to the cohomology of network sheaves over discrete valuation rings focuses on the $p$-adic case $\Ring = \Zp$, but is not limited to it. The valuation filtration $\{p^k\}$ stratifies cochains and cohomology by algebraic precision, organizing torsion into a hierarchy measurable by digit connecting maps. Three main results form the backbone: the Digit-SNF Dictionary (Theorem~A) proves that dimensions of digit map images encode exactly the Smith normal form exponents, making the complete torsion structure both conceptually transparent and computationally accessible through linear algebra over the residue field. Saturation Splitting (Theorem~B) constructs explicit integral idempotents projecting onto canonical representatives for cohomology classes, valid over any DVR and requiring no geometric structure. Truncated Stability (Theorem~C) guarantees that arithmetic barcodes are robust under high-precision perturbations: when two coboundaries agree modulo $\unif^m$, their barcodes coincide on all bars of length less than $m$, ensuring the invariants are numerically stable for computations with measured data.

The digit-SNF correspondence suggests a natural notion of arithmetic persistence where torsion summands $\Ring/\unif^a$ become bars of length $a$, measuring precision thresholds at which cohomology classes fail to lift. For rank-one sheaves, cycle holonomy determines bar lengths explicitly: $\valpi(h(C)-1)$ quantifies how many digits of precision a loop remains consistent. Threshold stability guarantees barcode invariance when perturbations are smaller than all bar lengths, providing robustness for numerical computation. This reframes torsion from computational obstacle to primary signal in settings where data naturally stratifies by precision.

Several mathematical questions remain open. Rigorous stability theory for arithmetic barcodes -- including bottleneck distance bounds and interleaving formulations -- requires careful development of ultrametric persistence module theory. Algorithmic complexity analysis should clarify when the digit route (successive mod $\unif^k$ computations) outperforms direct Smith normal form, particularly for sparse or structured matrices. Extension to cellular sheaves of dimension greater than one involves additional subtleties from higher differentials in the Bockstein spectral sequence and potential interactions with cup products. Functoriality and naturality properties under graph morphisms and sheaf pullbacks deserve systematic exposition. For multiparameter persistence combining geometric and algebraic filtrations, the interaction between scale and precision axes -- how topological features at different scales carry different precision signatures -- merits careful investigation.

Our initial framework positions network sheaf cohomology to address problems involving hierarchical precision structure: error-correcting codes where codewords degrade through noise channels, differential privacy mechanisms that release data at controlled resolution, cryptographic protocols with information-theoretic security bounds, synchronization networks with measurement uncertainty, and sensor calibration systems accumulating drift. The unifying feature is ultrametric distance -- whether from $p$-adic topology, valuation hierarchies, or quantized approximation -- inducing natural filtrations whose persistent features measure global consistency at varying precision. We anticipate that algebraic tools from module theory over DVRs, combined with topological methods from persistent homology, will provide new invariants and algorithms for such systems. 

Our prototype distributed consensus application demonstrates that arithmetic persistence addresses concrete engineering problems where ultrametric precision hierarchies arise naturally. The examples sketched here -- clock synchronization, formation control, federated learning -- represent only initial directions. The purely algebraic nature of our main results hints at broader applicability: any setting where data lives over a discrete valuation ring and propagates through a network can leverage these tools.


\end{document}